\documentclass[11pt,reqno]{amsart}
\usepackage{amsmath}
\usepackage{amssymb}
\usepackage{amsthm}
\usepackage{amscd, amsfonts, mathrsfs}

\usepackage{amsaddr}
\usepackage{cases}
\usepackage[dvips]{epsfig}
\usepackage{verbatim} 
\usepackage{epsf}
\usepackage[bookmarksnumbered, pdfpagelabels=true, plainpages=false, colorlinks=true,   linkcolor=black,citecolor=black,urlcolor=black]{hyperref}

\theoremstyle{plain}
\newtheorem{theorem}{Theorem}[section]

\newtheorem{lemma}[theorem]{Lemma}
\newtheorem{proposition}[theorem]{Proposition}
\newtheorem{corollary}[theorem]{Corollary}

\theoremstyle{definition}
\newtheorem{remark}[theorem]{Remark}
\newtheorem{notation}[theorem]{Notation}

\newtheorem{assumption}[theorem]{Assumption}

\numberwithin{equation}{section}
\allowdisplaybreaks

\setlength{\textwidth}{6.5in}     
\setlength{\oddsidemargin}{0in}   
\setlength{\evensidemargin}{0in}  
\setlength{\textheight}{8.7in}    
\setlength{\topmargin}{0in}       
\setlength{\headheight}{0in}      
\setlength{\headsep}{0.3in}        
\setlength{\footskip}{.2in}       


\baselineskip=7.0mm
\setlength{\baselineskip}{1.09\baselineskip}


\newcommand{\cD}{\mathcal D}

\newcommand{\cH}{\mathcal H}
\newcommand{\cI}{\mathcal I}
\newcommand{\cJ}{\mathcal J}

\newcommand{\cP}{\mathcal P}

\newcommand{\ccD}{\mathscr{D}}

\newcommand{\ccN}{\mathscr{N}}

\newcommand{\ccP}{\mathscr{P}}

\newcommand{\al}{\alpha}
\newcommand{\be}{\beta}
\newcommand{\ga}{\gamma}
\newcommand{\Ga}{\Gamma}
\newcommand{\de}{\delta}
\newcommand{\ep}{\epsilon}
\newcommand{\la}{\lambda}

\newcommand{\si}{\sigma}

\newcommand{\Om}{\Omega}
\DeclareMathOperator{\Tr}{Tr}

\newcommand{\TT}{\mathbb T}
\newcommand{\RR}{\mathbb R}

\newcommand{\rar}{\rightarrow}

\newcommand{\vol}{\operatorname{vol}}
\newcommand{\id}{\operatorname{id}}
\newcommand{\dive}{\operatorname{div}}
\newcommand{\curl}{\operatorname{curl}}

\newcommand{\norm}[1]{\Vert#1\Vert}
\newcommand{\abs}[1]{\vert#1\vert}


\newcommand{\mss}{\hspace{0.2cm}}



\def\fractext#1#2{{#1}/{#2}}
\def\indeq{\qquad\qquad{}}                     

\def\colb{\color{black}}

\def\les{\lesssim}
\def\siml{\stackrel{L}{=}}



\title[Free-boundary Euler]{A priori estimates for the 3D compressible free-boundary Euler equations
 with surface tension in the case of a liquid}

\author[Disconzi]{Marcelo M. Disconzi}
\address{\vskip -0.4cm Department of Mathematics\\
Vanderbilt University\\ Nashville, TN 37240, USA}
\email{marcelo.disconzi@vanderbilt.edu}
\thanks{Marcelo M. Disconzi is partially supported by  NSF grant DMS-1812826, 
a Sloan Research Fellowship provided by the Alfred P.~Sloan foundation,
a Discovery grant administered by Vanderbilt University, and a Dean's Faculty Fellowship.}

\author[Kukavica]{Igor Kukavica}
\address{\vskip -0.4cm  Department of Mathematics\\
University of Southern California \\ Los Angeles, CA 91107, USA}
\email{kukavica@usc.edu}
\thanks{Igor Kukavica is partially supported by NSF grant DMS-1615239 and NSF grant DMS-1907992.}

\keywords{Compressible Euler, free-boundary, surface tension}
\subjclass[2010]{35L45 (primary), 35L60 (secondary)}


\newcommand{\IntDom}{\underset{0 \leq t < T}{\bigcup} \{t\} \times \Om(t)}
\newcommand{\srest}{\mathord{\upharpoonright}}

\setcounter{secnumdepth}{4}
\setcounter{tocdepth}{4}
\expandafter\def\csname r@tocindent4\endcsname{0pt}

\makeatletter
\def\paragraph{\@startsection{paragraph}{4}%
  \z@\z@{-\fontdimen2\font}%
  {\normalfont\it}}
\makeatother

\begin{document}

\maketitle
\begin{abstract}
We derive a priori estimates for the compressible free-boundary Euler equations
with surface tension
in three spatial dimensions in the case of a liquid.
These are estimates for local existence in Lagrangian coordinates
when the initial
velocity and initial density
belong to $H^3$, with an extra regularity condition on the moving
boundary,
thus lowering the regularity of the initial data.
Our methods are direct and involve
two key elements: the boundary regularity
provided by the mean curvature and a
new compressible Cauchy invariance.
\end{abstract}

\section{Introduction\label{section_intro}}

In this paper we derive a priori estimates for the
compressible free-boundary
Euler equations with surface tension in three space dimensions (Theorem \ref{main_theorem} below)
in the case of a liquid.
Our a~priori estimates provide bounds for the Lagrangian velocity
and Lagrangian density in $H^{3}$,
an improvement in  regularity as compared to
\cite{CoutandHoleShkollerLimit}.

The compressible free-boundary Euler equations in a domain of $\RR^3$ are given by
\begin{subequations}{\label{free_Euler_system}}
\begin{alignat}{5}
\frac{\partial u}{\partial t}+ \nabla_u u + \frac{1}{\varrho}\nabla p &&\, = \,& \, 0 &&  \hspace{0.25cm}   \text{ in } && \ccD,
\label{free_Euler_eq}
 \\
\frac{\partial \varrho}{\partial t}+ \nabla_u \varrho + \varrho \dive( u) &&\, = \,& \, 0 &&  \hspace{0.25cm}   \text{ in } && \ccD,
\label{free_Euler_density_eq}
 \\
p  && \, = \,& \, p(\varrho) &&  \hspace{0.25cm}  \text{ in } && \ccD,
\label{eq_state} \\
p  && \, = \,& \, \si \cH  &&  \hspace{0.25cm}  \text{ on } && \, \partial \ccD,
\label{free_Euler_bry_p} \\
\left. (\partial_t + u^\mu \partial_{x^\mu})\right|_{\partial \ccD}  && \,  \in \,& T \partial \ccD,  &&\, &&
\label{free_Euler_bry_u}
\displaybreak\\
u(0, \cdot)  =  u_0, \hspace{0.2cm} \varrho(0,\cdot) = \varrho_0,
\hspace{0.2cm}
\Om(0) && \, = \, & \, \Om_0,
\label{free_Euler_ic}
\\
\text{where \hspace{0.7cm} } \ccD && \, = \, & \, \IntDom.
\label{def_domain_free}
\end{alignat}
\end{subequations}
Above, the quantities $u = u(t,x)$, $p=p(t,x)$, $\varrho = \varrho(t,x)$ are the velocity,
pressure, and density of the fluid; $\Om(t) \subset \RR^3$ is the moving (i.e., changing over time) domain,
which may be written as $\Om(t) = \eta(t)(\Om_0)$, where $\eta$ is the flow of $u$;
 $\si$ is a non-negative constant
known as the coefficient of surface tension.
Equation (\ref{eq_state}) is the equation of state, indicating that
the pressure is a given function of the density.
In \eqref{free_Euler_bry_p},
$\cH$ is the mean curvature of the moving (time-dependent) boundary $\partial \Om(t)$;
and $T\partial \ccD$ is the tangent bundle of $\partial \ccD$. The equation (\ref{free_Euler_bry_u}) means that
the boundary $\partial \Om(t)$
moves at a speed equal to the normal component of $u$.
The quantity $u_0$ is the velocity at time zero, $\varrho_0$ is the density
at time zero, 	
and $\Om_0$ is the
domain at the initial time.
The symbol
$\nabla_u$ is the derivative in the direction of $u$, often written as $u \cdot \nabla$.
The unknowns
in (\ref{free_Euler_system}) are $u$, $\varrho$, and $\Om(t)$.
Note that $\cH$, $T\partial \ccD$, and $p$ are functions of the unknowns and, therefore,
are not known a~priori, and have to be determined alongside a solution to the problem.

We focus on the case when $\si > 0$ and consider the model case when
\begin{gather}
\Om_0 \equiv \Om = \TT^2 \times (0,1).
\nonumber
\end{gather}
Denoting coordinates on $\Om$ by $(x^1, x^2, x^3)$, set
\begin{gather}
\Ga_1 = \TT^2 \times \{x^3 = 1\}
\nonumber
\end{gather}
and
\begin{gather}
\Ga_0 = \TT^2 \times \{x^3 = 0\},
\nonumber
\end{gather}
so that $\partial \Om = \Ga_0 \cup \Ga_1$.
The general domain can then be handled as
in \cite[Remark~4.2]{KukavicaTuffahaVicol-3dFreeEuler}.
We assume that the lower boundary
does not move, and thus $\eta(t)(\Ga_0) = \Ga_0$,
where $\eta$ is the flow of the vector field $u$.
We introduce the Lagrangian velocity, pressure, and density, respectively,
by $v(t,x) = u(t,\eta(t,x))$, $q(t,x) = p(t,\eta(t,x))$, and $R(t,x) = \varrho(t, \eta(t,x))$,
or simply $v = u \circ \eta$, $q = p \circ \eta$, and $R = \varrho \circ \eta$.
Therefore,
\begin{gather}
\partial_t \eta = v.
\label{eta_dot}
\end{gather}
Denoting by $\nabla$ the derivative
with respect to the spatial variables $x$, introduce the matrix
\begin{gather}
a = ( \nabla \eta )^{-1},
\nonumber
\end{gather}
which is well defined for $\eta$ near the identity.
Equation (\ref{eq_state}) gives $q = q(R)$, i.e., the equation of state written in Lagrangian variables.
From $a$ we obtain the cofactor matrix
\begin{gather}
A = J a,
\label{co-factor}
\end{gather}
where
\begin{gather}
J = \det ( \nabla \eta ).
\label{def_J}
\end{gather}
As a consequence of these definitions, we have the Piola identity
\begin{gather}
\partial_\be A^{\be\al} = \partial_\be (J a^{\be\al} )= 0.
\label{div_identity}
\end{gather}
(The identity (\ref{div_identity}) can be verified by direct computation using the explicit
form of $a$ given in (\ref{a_explicit}) below, or cf.~\cite[p.~462]{EvansPDE}.)
Above and throughout we adopt the following agreement.

\begin{notation}
We denote by $\partial_\al$ spatial derivatives, i.e.,
$\partial_\al = \fractext{\partial}{\partial x^\al}$, for
$\al = 1, 2, 3$. Greek indices ($\al, \be$, etc.) range from $1$ to $3$ and Latin
indices ($i, j$, etc.), range from $1$ to $2$. Repeated indices are summed over their range.
Indices shall be raised and lowered with the Euclidean metric.
We write $\partial^\al = \de^{\al\be} \partial_\be$.
\end{notation}
In terms of $v$, $q$, $R$, and $a$, the system (\ref{free_Euler_system})
becomes
\begin{subequations}{\label{Lagrangian_free_Euler_system}}
\begin{alignat}{5}
R \partial_t v^\al + a^{\mu\al} \partial_\mu q &&\, = \,& \, 0 && \hspace{0.3cm}  \text{ in } && [0,T) \times \Om,
\label{Lagrangian_free_Euler_eq}
 \\
 \partial_t R + R a^{\mu \al} \partial_\mu v_\al  &&\, = \,& \, 0 &&  \hspace{0.3cm} \text{ in } && [0,T) \times \Om,
 \label{Lagrangian_free_density_Euler_eq}
 \\
 \partial_t a^{\al\be} +  a^{\al \ga} \partial_\mu v_\ga a^{\mu \be} &&\, = \,& \, 0 && \hspace{0.3cm}  \text{ in } && [0,T) \times \Om,
\label{Lagrangian_free_Euler_a_eq}
\\
q &&\, = \,& \, q(R) &&  \hspace{0.3cm} \text{ in } && [0,T) \times \Om,
\label{Lagrangian_free_eq_state}
\\
a^{\mu \al} N_\mu q + \si |a^T N | \Delta_g \eta^\al &&\, = \,& \, 0 && \hspace{0.3cm}  \text{ on } && [0,T) \times \Ga_1,
\label{Lagrangian_bry_q}
\\
v^\mu N_\mu   &&\, = \,& \, 0 && \hspace{0.3cm}  \text{ on } && [0,T) \times \Ga_0,
\label{Lagrangian_bry_v}
\\
 \eta(0,\cdot) = \id,
\hspace{0.2cm}
R(0,\cdot) = \varrho_0,
\hspace{0.2cm}
 v(0, \cdot)&& \, = \, & \, v_0,
\label{Lagrangian_free_Euler_ic}
\end{alignat}
\end{subequations}
where $\id$ is the identity diffeomorphism on $\Om$,
$N$ is the unit outer normal to $\partial \Om$, $a^T$ is the transpose of
$a$, $|\cdot|$ is the Euclidean norm, and $\Delta_g$ is the Laplacian of the metric $g_{ij}$
induced on
$\partial \Om(t)$ by the embedding $\eta$. Explicitly,
\begin{gather}
g_{ij} = \partial_i \eta \cdot \partial_j \eta = \partial_i \eta^\mu \partial_j \eta_\mu,
\label{metric_def}
\end{gather}
where $\cdot$ is the Euclidean inner product,
and
\begin{gather}
\Delta_g  (\cdot) = \frac{1}{\sqrt{g}} \partial_i (\sqrt{g} g^{ij} \partial_j (\cdot) ),
\label{Laplacian_def}
\end{gather}
with $g$  the determinant of the matrix $(g_{ij})$.
In (\ref{Lagrangian_bry_q}), $\Delta_g \eta^\al$ simply means
$\Delta_g$ acting on the scalar function $\eta^\al$, for each $\al =1, 2, 3$;
see Lemma~\ref{lemma_geometric} below for some important identities used to obtain
(\ref{Lagrangian_bry_q}).

Since $\eta(0,\cdot) = \id$,
the initial Lagrangian and Eulerian velocities agree, i.e., $v_0 = u_0$.
Clearly,
$v_0$ is orthogonal to $\Ga_0$ in view of (\ref{Lagrangian_bry_v}).
Note that
\begin{gather}
a(0,\cdot) = I,
\label{a_zero_identity}
\end{gather}
where $I$ is the identity matrix, in light of (\ref{Lagrangian_free_Euler_ic}).
It also follows from the above definitions  that $J$ satisfies
\begin{alignat}{5}
\partial_t J - J a^{\al\be}\partial_\al v_\be &&\, = \,& \, 0 &&  \text{ in } && [0,T) \times \Om
\label{Lagrangian_free_Euler_J}
\end{alignat}
and
\begin{alignat}{5}
R J = R(0)=\varrho_0 &&  \text{ in } && [0,T) \times \Om.
\label{Lagrangian_free_Euler_J_rho}
\end{alignat}

Physically, the equation of state has to satisfy $q^\prime(R) > 0$ (pressure
cannot decrease with an increase in density). Mathematically, this assumption guarantees
the coercivity of the kinetic term for $R$ in the energy. Here, we shall adopt a slightly
more restrictive equation of state that allows us to simplify the estimates. We assume
 there exists a constant $A_q > 0$ such that for all
$R$ in a certain interval $[a,b]$, we have
\begin{gather}
q^\prime(R) \geq A_q \, \text{ and }
\left(\frac{q(R)}{R}\right)^\prime \geq A_q.
\label{eq_state_assumption}
\end{gather}
By
Lemma~2.1(x) below, the first condition follows from the second
if we allow $A_q$ to be decreased if necessary.
Importantly, the condition
(\ref{eq_state_assumption}) is satisfied for equations of state of the
form $q(R) = \alpha R^{1+\ga}$, where $\al > 0$ and $\ga > 0$ are constants
(with further assumptions on the constants
and the range of $R$, (\ref{eq_state_assumption}) is also satisfied
by $q(R) = \alpha R^{1+\ga} + \be$, where $\be > 0$).

\begin{notation}
Sobolev spaces are denoted by
$H^s(\Om)$ (or simply by $H^s$ when no confusion can arise), with the corresponding
norm denoted by $\norm{\cdot}_s$; note that $\norm{\cdot}_0$ refers
to the $L^2$ norm. We denote by
$H^s(\partial \Om)$ the Sobolev space of maps defined on $\partial \Om$, with the corresponding
norm $\norm{\cdot}_{s,\partial}$, and similarly the space
$H^s( \Ga_1)$ with the norm  $\norm{\cdot}_{s,\Ga_1}$.
 The $L^p$ norms on $\Om$ and $\Ga_1$ are denoted by
$\norm{\cdot}_{L^p(\Om)}$ and $\norm{\cdot}_{L^p(\Ga_1)}$ or
$\norm{\cdot}_{L^p}$ when no confusion can arise.
We use $\srest$ to denote restriction, and $\Delta$ is the Euclidean Laplacian
in $\Om$.
\end{notation}

We now state our main result.

\begin{theorem}
Let $\Om$ be as described above and let $\si >0$ in (\ref{Lagrangian_free_Euler_system}).
Let $v_0$ be a smooth vector field on $\Om$, and $\varrho_0$ a smooth positive function on $\Om$ bounded away from
zero from below.
Let $q\colon (0,\infty) \rar (0,\infty)$ be a smooth function satisfying (\ref{eq_state_assumption}),
in a neighborhood of $\varrho_0$.
Then, there exist a $T_*>0$ and
a constant $C_*$, depending only on
\begin{gather}
\si, \, \norm{v_0}_3, \, \norm{ v_0}_{3,\Ga_1}, \, \norm{ \varrho_0}_3, \,
\norm{\varrho_0}_{3,\Ga_1},
\norm{\curl v_0}_{2.5+\delta},
\text{and }
\norm{(\Delta \dive v_0 )\srest \Ga_1 }_{-1,\Ga_1},
\nonumber
\end{gather}
where $\delta\in(0,0.5]$, such that any smooth solution
$(v,R)$ to (\ref{Lagrangian_free_Euler_system}) with initial condition
$(v_0,\varrho_0)$ and defined on the time interval $[0,T_*)$, satisfies
\begin{align}
\begin{split}
\norm{v}_3 + \norm{\partial_t v}_{2} + \norm{\partial^2_t v}_{1}
+ \norm{\partial^3_t v}_0
+ \norm{R}_3 + \norm{\partial_t R}_{2} + \norm{\partial^2_t R}_1 + \norm{\partial^3_t R}_0 \leq C_*.
\end{split}
\nonumber
\end{align}
\label{main_theorem}
\end{theorem}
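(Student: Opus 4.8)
The plan is to run a high-order energy method directly in Lagrangian coordinates, supplemented by elliptic-type estimates that recover the purely spatial regularity, and to close the argument by a Gr\"onwall/continuation scheme on a short interval $[0,T_*)$ on which $\eta$ stays near $\id$.

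\textbf{Step 1 (the energy).} I would work with a functional of the schematic form
\[
\mathcal{E}(t)=\sum_{k=0}^{3}\Big(\norm{\partial_t^k v(t)}_{3-k}^2+\norm{\partial_t^k R(t)}_{3-k}^2\Big)+\si\sum_{k=0}^{2}\norm{\partial_t^k v(t)}_{1,\Ga_1}^2+(\text{lower-order geometric terms}),
\]
the surface-tension sum being what encodes the extra boundary regularity carried by the mean curvature through (\ref{Lagrangian_bry_q}). Coercivity of the two families of $R$-norms (in the kinetic part and, after the elliptic step, in the potential part) is exactly where hypothesis (\ref{eq_state_assumption}) is used. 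Because $\eta(0,\cdot)=\id$, $a(0,\cdot)=I$, and $RJ=\varrho_0$, on a sufficiently short interval all geometric coefficients ($a$, $J$, $g_{ij}$, $g^{ij}$, $\sqrt g$, $|a^TN|$) stay uniformly comparable to their Euclidean values, so every variable-coefficient or nonlinear term can be treated perturbatively with constants depending only on the data listed in the theorem.

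\textbf{Step 2 (time-differentiated energy identities).} Apply $\partial_t^k$, $k=0,1,2,3$, to (\ref{Lagrangian_free_Euler_eq}), multiply the $J$-weighted form $\varrho_0\partial_t v^\al+A^{\mu\al}\partial_\mu q=0$ by $\partial_t^k v_\al$, and integrate over $\Om$. The inertial term gives $\tfrac12\tfrac{d}{dt}\int_\Om\varrho_0|\partial_t^kv|^2$ up to a controllable commutator; in the pressure term, integrating by parts in $x^\mu$ and using the Piola identity (\ref{div_identity}) removes the worst interior contribution, leaves lower-order interior terms, and produces a boundary integral on $\Ga_1$ which, via (\ref{Lagrangian_bry_q}), $\partial_t^kv=\partial_t^{k+1}\eta$, and a further integration by parts on the closed surface $\Ga_1$, equals (essentially) $-\si\,\tfrac{d}{dt}\big(\tfrac12\int_{\Ga_1}|a^TN|\,g^{ij}\partial_i\partial_t^k\eta\cdot\partial_j\partial_t^k\eta\big)$ plus commutators involving $[\partial_t^k,\Delta_g]$, $[\partial_t^k,|a^TN|]$, $[\partial_t^k,\sqrt g\,g^{ij}]$. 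In parallel, differentiate the density equation (\ref{Lagrangian_free_density_Euler_eq}) and pair $\partial_t^kR$ with the appropriate multiple of the time-differentiated divergence so that the coercive term $\int_\Om(q(R)/R)'|\partial_t^kR|^2$ (equivalently $\int_\Om\tfrac{q'(R)}{R}|\partial_t^kR|^2$) appears. Summing over $k$ yields $\tfrac{d}{dt}\mathcal{E}_{\mathrm{tan}}(t)\les P(\mathcal{E}(t))$ for a polynomial $P$, where $\mathcal{E}_{\mathrm{tan}}$ controls $\partial_t^kv$ and $\partial_t^kR$ in $L^2$ and $\partial_t^k\eta$ in $H^1(\Ga_1)$.

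\textbf{Step 3 (recovery of spatial regularity and conclusion).} To upgrade the time-derivative control to the full norms, I would use a div--curl scheme: from (\ref{Lagrangian_free_density_Euler_eq}), $a^{\mu\al}\partial_\mu v_\al=-\partial_tR/R$ controls $\dive v$ (and $\dive\partial_t^kv$) in terms of $\partial_t^{k+1}R$ and $\norm{R}_3$; for the curl I would invoke the compressible Cauchy invariance, whose evolution is of lower order, so that $\curl\partial_t^kv$ is bounded by the initial data plus $P(\mathcal{E})$; the normal traces come from (\ref{Lagrangian_bry_v}) on $\Ga_0$ and from the surface energy on $\Ga_1$; a div--curl elliptic estimate then gives $\norm{\partial_t^kv}_{3-k}$. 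For $R$ and its time derivatives, (\ref{Lagrangian_free_Euler_eq}) gives $a^{\mu\al}\partial_\mu q=-R\,\partial_tv^\al$, which bounds $\nabla q$, hence $\nabla R$ through $q=q(R)$ with $q'\geq A_q$, in terms of $\partial_tv$; bootstrapping with (\ref{Lagrangian_free_Euler_J_rho}) and the elliptic structure of (\ref{Lagrangian_bry_q}) recovers $\norm{\partial_t^kR}_{3-k}$. Combining with Step 2 gives a closed inequality $\tfrac{d}{dt}\mathcal{E}(t)\les P(\mathcal{E}(t))$; choosing $T_*$ small and applying Gr\"onwall's inequality (equivalently, a continuation argument) produces the stated bound with $C_*$ depending only on the listed quantities.

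\textbf{Main obstacle.} The crux is the $\Ga_1$ analysis in Step 2: extracting the \emph{positive} surface energy from the surface-tension boundary term after up to three time differentiations, showing that every commutator ($[\partial_t^k,\Delta_g]$, $[\partial_t^k,|a^TN|]$, $[\partial_t^k,\sqrt g\,g^{ij}]$) is genuinely lower order with no loss of derivatives, and that the induced metric and its determinant remain uniformly nondegenerate. This is where the $H^3(\Ga_1)$ hypotheses on $v_0$ and $\varrho_0$ and the auxiliary control of $\norm{(\Delta\dive v_0)\srest\Ga_1}_{-1,\Ga_1}$ (which pins down the initial value of the twice-time-differentiated boundary condition) are needed. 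A secondary difficulty is matching the elliptic gain for $q$ and $R$ in Step 3 to the order demanded by Step 2 without a derivative loss, which is precisely where the restrictive equation of state (\ref{eq_state_assumption}) and the compressible Cauchy invariance are used together.
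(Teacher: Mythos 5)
Your overall architecture --- time-differentiated energy identities for the tangential part, the compressible Cauchy invariance for the curl, div--curl estimates to recover spatial regularity, and a polynomial Gronwall/continuation argument --- matches the paper's. However, there are two concrete gaps. The first is that the derivative accounting in your energy scheme does not close. Testing the $\partial_t^k$-differentiated momentum equation against $\partial_t^k v$ yields a surface energy controlling only $\overline{\partial}\partial_t^k\eta$ in $L^2(\Ga_1)$, i.e.\ one tangential derivative of $\partial_t^{k-1}v$ on the boundary. The div--curl estimate for $X=\partial_t^k v$ at level $s=3-k$ requires the normal trace $\partial_t^k v^3$ in $H^{2.5-k}(\Ga_1)$: for $k=1$ this is $H^{1.5}(\Ga_1)$ and for $k=0$ it is $H^{2.5}(\Ga_1)$, neither of which your boundary energy $\si\sum_{k=0}^{2}\norm{\partial_t^k v}_{1,\Ga_1}^2$ supplies. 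The paper fills this gap with a separate \emph{mixed} energy identity (two time derivatives plus one tangential spatial derivative, \eqref{EQ40}), which produces $\norm{\Pi\overline{\partial}{}^2\partial_t v}_{0,\Ga_1}$, and with elliptic estimates on the time-differentiated boundary condition \eqref{Lagrangian_bry_q}, together with the regularity gain of Proposition~\ref{proposition_regularity}, to obtain $\norm{v^3}_{2.5,\Ga_1}$. Note that the paper explicitly cannot use the spatial $H^3$ energy identity at $k=0$, so the missing spatial boundary regularity must come entirely from these auxiliary elliptic estimates, which your sketch does not provide.

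The second gap is your claim that after the Piola identity the remaining interior terms are lower order and can be "treated perturbatively." This is false at the top order: the term $\int_\Om \partial_t^3 A^{\mu\al}\,q\,\partial_t^3\partial_\mu v_\al$ pairs $\nabla\partial_t^2 v$ with $\nabla\partial_t^3 v$ and is not controlled by the energy or by H\"older estimates; it is handled only through the antisymmetric $\varepsilon^{\al\la\tau}$ structure of the cofactor matrix, integrations by parts in $t$ and $x$, and exact cancellations (the "tricky term" of Section~\ref{section_J_2}). Similarly, on $\Ga_1$ several of the commutators you list are not individually lower order: for instance the term $I_{1131}$ cancels only against a contribution from $\partial_t^3\Pi$ via the identity \eqref{EQ57}, and other top-order boundary terms are closed by substituting the boundary condition back in and by the symmetry identity \eqref{EQ53}. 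Labeling these cancellations "the main obstacle" without producing them leaves the core of the argument unproved.
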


\vspace{-8pt}
The dependence of $T_*$ and $C_*$ on a higher norm on the boundary $\Ga_1$
comes from the usual problems caused by the moving boundary
in free-boundary problems.
The technical difficulties leading to the necessity of including such higher norm
are similar to those in \cite{IgorMihaelaSurfaceTension} (see
Section~\ref{section_time_zero} and
Remark~\ref{remark_extra_regularity} below).
The assumption on $(\Delta \dive v_0 )\srest \Ga_1$ is technical. It can be understood
as a consequence of the fact that our techniques generalize methods
previously applied to incompressible fluids in
\cite{DisconziKukavicaIncompressible}, where of course the condition is immediately
satisfied as $\dive v_0 = 0$ then. A regularity condition on the normal
derivatives of the normal component of $v_0$ would suffice, but the assumption
on $(\Delta \dive v_0 )\srest \Ga_1$ is simpler to state. We remark that control of
$\curl v$ in $H^{2.5+}$ follows from an argument similar to \cite{KukavicaTuffahaVicol-3dFreeEuler}
combined with a simple estimate for the divergence which is omitted here.

Without attempting to be exhaustive, we now briefly review the literature on problem (\ref{Lagrangian_free_Euler_system}),
and it is instructive to first recall some results for the incompressible free-boundary Euler equations.

The first existence result for incompressible free-boundary inviscid fluids is that of
Nalimov
\cite{NalimovCauchyPoisson},
followed by
\cite{
Beale_et_al_Growth, 
CraigBoussinesq, 
KanoNishida,
NishidaEquationsFluidDynamics,
Shinbrot2,
Shinbrot3,
Shinbrot,
WuWaterWaves2d,
WuWaterWaves,
YosiharaGravity,
YosiharaGravitySurfaceTension}. 
Despite their  importance, all these works
consider simplifying
assumptions, mostly irrotationality.
It has not been until fairly recently, with the works of  Lindblad \cite{LindbladFreeBoundary} for $\si =0$, Coutand and Shkoller \cite{CoutandShkollerFreeBoundary} for $\si \geq 0$,
and Shatah and Zeng \cite{ShatahZengGeometry,ShatahZengInterface}, also for $\si \geq 0$,
and more recently by the first author and Ebin
\cite{DisconziEbinFreeBoundary3d}
for $\si > 0$,
that existence and uniqueness for the incompressible free-boundary Euler equations
have been addressed in full generality. Since the early 2000's,
research on this topic has blossomed,
as is illustrated by the sample list
\cite{
AlazardAboutGlobalExistence,
AlazardCapillaryWaterWaves, 
AlazardWaterWaveSurfaceTension, 
AlazardDispersiveSurfaceTension, 
AlazardCollectionWaterWaves,
AlazardCauchyWaterWaves,
AlazardSobolevEstimates,
AlazardCauchyTheoryWaterWaves,
AlazardStabilizationSurfaceTension, 
AlazardDelortGlobal2dWater,
AmbroseMasmoudiWaterWaves, 
AmbroseVortexSheets, 
BieriWu1,
BieriWu2,
FeffermanetallSplashSurfaceTension, 
FeffermanetallSplash,
FeffermanStructural,
FeffermanetallMuskat,
ShkollerElliptic,
ShkollerVortexSheets, 
ChristodoulouLindbladFree,
CoutandSingularity,
MR2660719,
CoutandShkollerSplash, 
CraigHamiltonianWaterWaves,
PoryferreEmergingBottom,
IonescuGlobal3dCapillary, 
Disconzilineardynamic,
DisconziEbinFreeBoundary2d,
DisconziKukavicaIncompressible,
FeffermanIonescuLie, 
GermainMasmoudiShatahGlobalWaterWaves3D,
GermainMasmoudiShatahGlobalCapillary, 
IfrimHunterTataru,
IfrimTataru2dCapillary, 
IfrimTataruGlobalWater,
IfrimTataruGravityConstant,
Iguchi_et_al_FreeBoundary,
IonescuPusateriGlobal2dwaterModel, 
IonescuPusateriGlobal2dwaterSurfaceTension, 
IonescuPusateriWaterWaves2d,
KukavicaTuffaha-Free2dEuler,
KukavicaTuffaha-RegularityFreeEuler,
KukavicaTuffahaVicol-3dFreeEuler,
LannesWaterWaves,
LannesWaterWavesBook,
LindbladFree1,
Lindblad-LinearizedFreeBoundary,
LindbladNordgren-AprioriFreeBoundary,
Ogawa-Tani_FreeBoundarySurfaceTension, 
Ogawa-Tani_FiniteDepth,
PusateriTwoPhaseOnePhaseLimitSurfaceTension, 
SchweizerFreeEuler, 
WuAlmostGlobal,
WuGlobal}.

Although we are concerned here with $\si > 0$,
it is worth mentioning that
the free-boundary Euler equations behave differently for $\si = 0$ and $\si >0$.
In view of a counter-example to well-posedness  by Ebin \cite{Ebin_ill-posed},
an extra condition (known as Taylor sign condition in the incompressible case),
has to be imposed when $\si = 0$.
However, it seems more difficult to obtain
local existence in
lower regularity
spaces when $\sigma>0$ compared to $\sigma=0$ due to the
presence of two space derivatives of $\eta$ on the free boundary.

For the compressible free-boundary Euler equations (\ref{Lagrangian_free_Euler_system}),
besides the difference between $\si > 0$ and $\si = 0$ referred above,
a further distinction that needs to be made
is between a liquid, when $\varrho_0 \geq \la > 0$, where $\la$ is a constant,
and a gas, when $\varrho_0$ can be zero, the former being the situation treated here. Existence and uniqueness
of solutions for (\ref{Lagrangian_free_Euler_system}) have been proved by
Lindblad \cite{Lindblad-FreeBoundaryCompressbile} for the case of a liquid with $\si = 0$,
by Coutand and Shkoller \cite{CoutandShkollerFreeCompressible}
for a gas with $\si = 0$ (see also \cite{TrakhininCompressbleFreeEulerClassicalandRelativistic}), and by Coutand, Hole, and Shkoller
\cite{CoutandHoleShkollerLimit} for a liquid with $\si \geq 0$.
Earlier and related works are
\cite{
ChenWangExistenceStabilityCompressibleMHD,
CoulombelSecchiCompressibleVortexSheets,
CoulombelSecchiUniquenessVortexSheets,
CoutandShkollerLindblad,
CoutandShkollerFreeCompressible1D,
JangMasmoudiCompressibleEulerVacuum,
JangMasmoudiVacuum,
Lindblad-LinearizedFreeBoundaryCompressible,
MakinoGaseousStars,
TrakhininExistenceCompressibleVortexSheets,
TrakhininExistenceStabilitiyCompressibleVortexSheets}. Further, and more recent results,
are \cite{HadzicShkollerSpeck, JangLeFlochMasmoudi, LuoLindbladIncompressibleLimit,LuoWaterWaves}.

In this work we restricted ourselves to derive a~priori estimates, hence a solution is assumed to be given.
Therefore, there is no need to state compatibility conditions for the initial data. But we remind the
reader that such conditions are necessary for construction of solutions. We also note that in our setting,
compatibility conditions will be different on $\Ga_1$ and on $\Ga_0$ (see, e.g.,
\cite{CoutandHoleShkollerLimit}, for the compatibility conditions on $\Ga_1$, and \cite{DisconziEbinSlightly}
for those on $\Ga_0$).

\begin{assumption}{\it
For the rest of the paper, we work under the assumptions of
Theorem~\ref{main_theorem} and denote by $(v,q)$
a smooth solution to (\ref{Lagrangian_free_Euler_system}). We also assume that
$\Om$, $\Ga_1$, and $\Ga_0$ are as described above.}
\end{assumption}

\subsection{Strategy and organization of the paper\label{section_strategy}}
The paper is organized as follows. Theorem~\ref{main_theorem} states the main result.
Section~\ref{sec2}
contains the preliminary estimates of the coefficients and the Lagrangian map. We also introduce the notation used in the rest of the paper.
Section~\ref{sec3} contains the energy estimates. First, we start with the
energy equality for the third time derivatives
(cf.~\eqref{EQ03} below). Special care is required for the boundary
integral, which is treated with complete details in
Subsection~\ref{section_I_1}.
Two time derivative energy equality is written in \eqref{EQ40} below,
with the estimates given in Section~\ref{section_two_time_derivative}.
We emphasize that the obtained terms are not of lower order as they contain one more space derivative.
We also point out that we can not use
the $H^3$ energy equality with no time derivatives, since there is an interior term
which can not be treated by the methods from the rest of the paper; instead, we need to rely on the div-curl estimates to obtain control of the $H^3$ norms
of the velocity and the density.
Section~\ref{sec4} contains estimates for the curl of the velocity; the main building block
is a new Cauchy invariance formula,
generalizing the incompressible version
from
\cite{IgorMihaelaSurfaceTension,KukavicaTuffahaVicol-3dFreeEuler}.
The conclusion of the proof, where all the bounds are suitably combined,
is provided in the last section.

Several of the terms that appear in our energy identities, especially
in the case of some boundary integrals, cannot be bounded directly.
To control them, we explore the structure of the equations and make frequent
use of several geometric identities. These lead to a cancellation of top-order terms,
allowing us to close the estimates.

\vspace{-4pt}
\section{Auxiliary results}
\label{sec2}

In this section we state some preliminary results that are employed in the proof of
Theorem~\ref{main_theorem} below.

\begin{lemma}
\label{L00}
Assume that
$\norm{v}_{3},\norm{R}_{3} \leq M$,
 where $M\ge 1$.
Then, there exists a constant $C> 0$
such that if $T \in [0,\fractext{1}{C M^2}]$ and $(v,q)$ is defined on $[0,T]$,
the following inequalities hold for
$t \in [0,T]$:

\vskip 0.2cm
(i) $\norm{\eta}_3 \leq C$.

\vskip 0.2cm
(ii) $\norm{ a}_2 \leq C$.

\vskip 0.2cm
(iii) $\norm{ \partial_t a }_{L^p} \leq C \norm{ \nabla v}_{L^p}$, $1 \leq p \leq \infty$.

\vskip 0.2cm
(iv) $\norm{ \partial_\al \partial_t a }_{L^p} \leq C \norm{\nabla v}_{L^{p_1}}
\norm{\partial_\al a}_{L^{p_2}}  +
C\norm{\partial_\al \nabla v}_{L^p}$,
where $\fractext{1}{p} = \fractext{1}{p_1} + \fractext{1}{p_2}$,
and $1 \leq p, p_1, p_2 \leq 6$.

\vskip 0.2cm
(v) $\norm{\partial_t a }_s \leq C \norm{ \nabla v}_s$, $0 \leq s \leq 2$.

\vskip 0.2cm
(vi) $\norm{\partial^2_t a}_s \leq C \norm{\nabla v}_s \norm{\nabla v}_{L^\infty}
+ C \norm{\nabla \partial_t v }_s$, $0 \leq s \leq 1$.

\vskip 0.2cm
(vi)${}^\prime$ $\norm{ \partial^2_t a }_1 \leq C \norm{ \nabla v}^2_{\fractext{5}{4}} + C
\norm{ \nabla \partial_t v }_1$.

\vskip 0.2cm
(vii) $\norm{\partial^3_t a}_{L^p} \leq C \norm{\nabla v}_{L^p} \norm{\nabla v}_{L^\infty}^2
+ C \norm{\nabla \partial_t v}_{L^p} \norm{\nabla v}_{L^\infty}
+ C \norm{\nabla \partial^2_t v}_{L^p}$, $1 \leq p < \infty$.

\vskip 0.2cm
(viii) $J \geq 1/2$.

\vskip 0.2cm
(ix) Furthermore, if $\epsilon$ is sufficiently small
and $T \leq \fractext{\epsilon}{CM^2}$ then, for
$t \in [0,T]$, we have
\begin{gather}
\norm{a^{\al \be} - \de^{\al\be} }_2 \leq \epsilon
\nonumber
\end{gather}
and
\begin{gather}
\norm{a^{\al \mu} a^\be_{\mss\mu} - \de^{\al\be} }_2 \leq \epsilon.
\nonumber
\end{gather}
In particular, the form $a^{\al \mu} a^\be_{\mss\mu}$ satisfies the ellipticity estimate
\begin{gather}
a^{\al \mu} a^\be_{\mss\mu} \xi_\al \xi_\be \geq \frac{1}{C} \abs{\xi}^2.
\nonumber
\end{gather}
(x) $C^{-1}\le R \le C$.    
\end{lemma}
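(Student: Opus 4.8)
The plan is to derive everything from two elementary facts and then propagate them with routine Sobolev product estimates. The first fact is the integral identity $\eta(t,\cdot) = \id + \int_0^t v(s,\cdot)\,ds$, which comes from $\partial_t\eta = v$ and $\eta(0,\cdot) = \id$; the second is the matrix ODE $\partial_t a = -a(\nabla v)a$, i.e.\ equation \eqref{Lagrangian_free_Euler_a_eq}, obtained by differentiating $a\,\nabla\eta = I$ in time. Throughout I use that in three space dimensions $H^2(\Om)$ is a Banach algebra which acts boundedly by multiplication on $H^s(\Om)$ for $0\le s\le 2$, together with the embeddings $H^2\hookrightarrow W^{1,\infty}$, $H^1\hookrightarrow L^6$, $H^{3/4}\hookrightarrow L^4$, $H^{5/4}\hookrightarrow L^{12}$, and the bound $\norm{\nabla v}_{L^\infty}\les\norm{v}_3\le M$.

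\emph{Bounds with no time derivatives.} From $\nabla\eta(t) = I + \int_0^t\nabla v(s)\,ds$ and $\norm{v}_3\le M$ one gets $\norm{\nabla\eta(t)-I}_2\le TM\le 1/(CM)\le 1/C$, which gives (i) (with $\norm{\eta}_3$ understood via the displacement $\eta-\id$). Since $\nabla\eta$ is then within $1/C$ of $I$ in $L^\infty$, choosing $C$ large yields the two-sided bound $1/2\le J\le 3/2$ of (viii); alternatively, Gronwall applied to \eqref{Lagrangian_free_Euler_J} using $\norm{\nabla v}_{L^\infty}\les M$ and $T\le 1/(CM^2)$ gives the lower bound. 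Once $\norm{\nabla\eta-I}_2$ is small, the Neumann series $a=(\nabla\eta)^{-1}=\sum_{k\ge0}(I-\nabla\eta)^k$ converges in the algebra $H^2(\Om)$ and produces (ii); equivalently one reads it off the explicit cofactor formula for $a$ using the lower bound on $J$. For (x), equation \eqref{Lagrangian_free_Euler_J_rho} gives $R=\varrho_0/J$, and since $\varrho_0$ is smooth and bounded away from zero on the compact set $\overline\Om$ while $J$ is bounded above and below, $C^{-1}\le R\le C$.

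\emph{Bounds on time derivatives of $a$.} From $\partial_t a=-a(\nabla v)a$ and $\norm{a}_{L^\infty}\le C\norm{a}_2\le C$, parts (iii) and (iv) follow from the pointwise identity for $\partial_t a$ (applying the Leibniz rule for (iv)) together with Hölder's inequality, and (v) from the $H^2$-multiplier property. For the second and third time derivatives I would differentiate the ODE again and substitute back, obtaining
\[
\partial_t^2 a = 2\,a(\nabla v)a(\nabla v)a - a(\nabla\partial_t v)a,
\]
and likewise an expression for $\partial_t^3 a$ which is a sum of a quartic term in $\nabla v$ (times three factors of $a$), a term of the form $a(\nabla v)a(\nabla\partial_t v)a$, and $a(\nabla\partial_t^2 v)a$, up to constants and permutations of the factors. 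The bounds (vi), (vi)$'$, and (vii) then follow by placing one factor in the $H^s$ (resp.\ $L^p$) norm and the remaining factors in $L^\infty$ or suitable $L^{p_i}$, using the embeddings above for the $\nabla v$ factors and the multiplier bound for the $a$ factors; the only point requiring attention is the choice of Hölder exponents, dictated in (vi)$'$ by $H^{3/4}\hookrightarrow L^4$ for the term with no derivative on $\nabla v$ and by $H^{5/4}\hookrightarrow L^{12}$ for the term carrying one derivative.

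\emph{Smallness for small $T$, and the main point.} For (ix), taking $T\le\epsilon/(CM^2)$ makes $\norm{\nabla\eta-I}_2\le TM\le\epsilon/(CM)\le\epsilon$; writing $a-I=-a(\nabla\eta-I)$ and using (ii) and the algebra property gives $\norm{a-I}_2\le C\epsilon$, and then $aa^T-I=(a-I)a^T+(a^T-I)$ gives $\norm{aa^T-I}_2\le C\epsilon$ — after relabelling $\epsilon$ these are exactly the stated inequalities, and the ellipticity estimate follows since $H^2\hookrightarrow L^\infty$ forces $aa^T$ to lie within $C\epsilon$ of the identity pointwise. There is no serious obstacle in this lemma: the only thing to monitor is that every smallness requirement above — $\norm{\nabla\eta-I}_2$ small, $J\ge 1/2$, the $\epsilon$-bounds, $R$ bounded away from $0$ — is obtained uniformly by first fixing $C$ large and then restricting $T$ so that $CM^2T$ (respectively $CM^2T/\epsilon$) is at most $1$, which is precisely the hypothesis; the rest is bookkeeping of multilinear Sobolev estimates.
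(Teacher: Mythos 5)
Your proposal is correct, and it fills in precisely the standard argument that the paper itself delegates to the cited references (Lemma~3.1 of the Kukavica--Tuffaha papers): the integral formula $\eta=\id+\int_0^t v$ for (i), (viii), (ix), the ODE $\partial_t a=-a(\nabla v)a$ iterated in time for (iii)--(vii), and routine Sobolev product/embedding bookkeeping. The only point where you genuinely diverge from what the paper writes out is (x): the paper obtains $C^{-1}\le R\le C$ by integrating the density equation \eqref{Lagrangian_free_density_Euler_eq} to get $\Vert R(t)-R(0)\Vert_{L^\infty}\le CM^2T$ and then using that $R(0)=\varrho_0$ is bounded above and away from zero, whereas you read it off the algebraic identity $RJ=\varrho_0$ from \eqref{Lagrangian_free_Euler_J_rho} together with the two-sided bound on $J$. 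Both are valid; yours is slightly cleaner but makes (x) depend on (viii), while the paper's version treats (viii) and (x) by the same self-contained Gronwall-type integration (which you also note as an alternative). No gaps.
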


\begin{proof}
The proofs of (i)--(vii)
and (ix)
are very similar to \cite[Lemma~3.1]{IgorMihaelaSurfaceTension} and
\cite[Lemma 3.1]{KukavicaTuffahaNavier-Lame},  making
the necessary adjustments for $\norm{v}_{3} \leq M$ (in \cite{IgorMihaelaSurfaceTension},
$\norm{v}_{3.5} \leq M$ is used).
The statement (x) follows
from
  \begin{equation*}
    \Vert R(t)-R(0)\Vert_{L^\infty}
    \le
    C
     \left\Vert
    \int_{0}^{t}
       R a^{\mu \al} \partial_\mu v_\al
    \right\Vert_{L^\infty}
    \le
    C
    \int_{0}^{t}
       \norm{R}_{3}
       \norm{v}_{3}
   \le
   C M^2 T
  \end{equation*}
by \eqref{Lagrangian_free_density_Euler_eq}.
The inequality (viii) is proven analogously,
using
\eqref{Lagrangian_free_Euler_J}
instead of
\eqref{Lagrangian_free_density_Euler_eq}.
\end{proof}

\begin{notation}
In the rest of the paper, the symbol $C$ denotes a positive sufficiently large
constant. It can vary
from expression to expression,
but it is always  independent
of $(v,R)$.
We also write $X \les Y$ to mean $X \leq C Y$.
The a~priori estimates require for $T$ to be sufficiently small so
that it satisfies
$T M\le 1/C$, where $M$ is an upper bound on the norm of the solution
(cf.~Lemma~\ref{L00} below).
In several estimates it suffices to keep track of the number of derivatives so we
write $\partial^\ell$ to denote any derivative of order $\ell$ and $\overline{\partial}{}^\ell$
to denote any derivative of order $\ell$ on the boundary, i.e., with respect to $x^i$.
We use upper-case Latin indices to denote $x^i$ or $t$, so $\overline{\partial}_A$
means $\partial_t$ or $\partial_i$.
\end{notation}

\begin{remark}
\label{remark_rational}
(Simple lower order estimates and symbolic notation)
In the subsequent sections, we use the following consequence of
Lemma~\ref{L00}.
Let $Q$ be a  rational function of derivatives of $\eta$ with
respect to $x^i$,
\begin{gather}
Q = Q(\partial_1 \eta^1, \partial_2 \eta^1,
\partial_1 \eta^2, \partial_2 \eta^2,
\partial_1 \eta^3, \partial_2 \eta^3).
\nonumber
\end{gather}
More precisely, we are given a map $Q\colon \cD \rar \RR$, where $\cD$ is a domain in $\RR^6$,
and consider the composition of $Q$ with $D (\eta \srest \Ga_1)$, where $D$ means
the derivative.
Assume that $0 \notin \overline{\cD}$ and that $(1,0,0,1, 0, 0) \in \cD$.
Assume that the derivatives of $Q$ belong to $H^s(\cD^\prime)$, where
$1 < s \leq 1.5$ and $\cD^\prime$ is some small  neighborhood
of $(1,0,0,1,0,0)$.
The application we have in mind is when $Q$ is a
combination of the terms
$\sqrt{g}$ and $g^{ij}$. It is not difficult to check that such terms
satisfy the assumptions just stated on $Q$.
In this regard, note that at time zero $g$
 is the Euclidean metric on $\Ga_1$, and that $(1,0,0,1, 0, 0)$ corresponds to
$D (\eta(0) \srest \Ga_1)$.

In what follows it suffices to keep track of the generic form of some expressions so
we write $Q$ symbolically as
 \begin{gather}
 Q = Q(\overline{\partial} \eta).
 \nonumber
 \end{gather}
Then
\begin{gather}
\overline{\partial}_A Q (\overline{\partial} \eta)= \widetilde{Q}^{i}_\al (\overline{\partial} \eta)
\overline{\partial}_A \partial_i \eta^\al,
\nonumber
\end{gather}
where the terms $\widetilde{Q}^{i}_\al (\overline{\partial} \eta)$ are also
rational function of derivatives of $\eta$ with
respect to $x^i$. Note that
$\widetilde{Q}^{i}_\al (\overline{\partial} \eta)$
are simply the partial derivatives of $Q$ evaluated at $\overline{\partial} \eta$.
We write the last equality symbolically as
\begin{gather}
\overline{\partial}_A Q(\overline{\partial} \eta) =
\widetilde{Q}(\overline{\partial} \eta) \overline{\partial}_A \overline{\partial} \eta.
\notag
\end{gather}
For $s > 1$, we have the estimate
\begin{gather}
\norm{\overline{\partial}_A Q(\overline{\partial} \eta)}_{s,\Ga_1} \leq C_1
\norm{\widetilde{Q}(\overline{\partial} \eta)}_{s,\Ga_1}
\norm{ \overline{\partial}_A \overline{\partial} \eta}_{s,\Ga_1},
\nonumber
\end{gather}
where $C_1$ depends only on $s$ and on the domain $\Ga_1$.
The term $\norm{ \widetilde{Q}(\overline{\partial} \eta)}_{s,\Ga_1}$
can be estimated in terms of the Sobolev norm of the map $\widetilde{Q}$, i.e.,
 $\norm{\widetilde{Q}}_{H^s(D)}$, and the Sobolev norm of  $\overline{\partial} \eta$, i.e.,
$\norm{\overline{\partial} \eta}_{s,\Ga_1}$.
Under the conditions of Lemma~\ref{L00}, we have
\begin{gather}
\norm{ \overline{\partial} \eta - \overline{\partial} \eta (0)}_{L^\infty(\Ga_1)}
\leq \int_0^t \norm{\partial_t \overline{\partial} \eta }_{L^\infty(\Ga_1)}
\leq C_2 t \norm{v}_3 \leq C_2 Mt,
\nonumber
\end{gather}
where $C_2$ depends only on the domain $\Ga_1$ and we used that
$H^{1.5}(\Ga_1)$ embeds into $C^0(\Ga_1)$. Therefore, if $t$ is very small,
we can guarantee that
\begin{gather}
\overline{\partial}\eta(\Ga_1) \subset \cD^\prime,
\nonumber
\end{gather}
and thus, shrinking $\cD$ if necessary, we can assume that the derivatives
of $Q$ are in $H^{s}(\cD)$ for $1 < s \leq 1.5$, and, therefore, that $\norm{\widetilde{Q}}_{H^s(\cD)} $
is bounded for $s \leq 1.5$.
Since Lemma~\ref{L00} also provides a bound for
$\norm{\overline{\partial} \eta}_{s,\Ga_1}$, $s \leq 1.5$, we conclude that
\begin{gather}
\norm{\overline{\partial}_A Q(\overline{\partial} \eta)}_{s,\Ga_1} \leq C
\norm{ \overline{\partial}_A \overline{\partial} \eta}_{s,\Ga_1},
\, \text{ for } \, 1 < s \leq 1.5,
\notag
\end{gather}
where $C$ depends only on $M$, $s$, and $\Ga_1$, and provided that $t$ is small
enough. The above also shows that
\begin{gather}
\norm{ Q(\overline{\partial} \eta)}_{s,\Ga_1} \leq C
\norm{  \overline{\partial} \eta}_{s,\Ga_1},
\, \text{ for } \, 1 < s \leq 1.5
   .
\notag
\end{gather}
\end{remark}

We also need some geometric identities that may be known to specialists,
but we state them below and provide some of the corresponding proofs
for the reader's convenience.

\begin{lemma}
\label{lemma_geometric}
Let $n$ denote the unit outer normal to $\eta(\Ga_1)$. Then
\begin{gather}
n\circ \eta = \frac{a^T N}{|a^T N|}.
\label{normal_identity}
\end{gather}
Denoting by $\tau$ the tangent bundle of $\overline{\eta(\Om)}$ and by
$\nu$ the normal bundle of $\eta(\Ga_1)$, the canonical projection
$\Pi\colon \tau \srest \eta(\Ga_1) \rar \nu$ is given by
\begin{gather}
\Pi^\al_\be = \de^\al_\be - g^{kl} \partial_k \eta^\al \partial_l \eta_\be.
\label{projection_identity}
\end{gather}
Furthermore, the following identities hold:
\begin{gather}
\Pi^\al_\la \Pi^\la_\be = \Pi^\al_\be,
\label{projections_contraction}
\end{gather}
\begin{gather}
J |a^T N | = \sqrt{g},
\label{aT_identity}
\end{gather}
\begin{gather}
\sqrt{g} \Delta_g \eta^\al = \sqrt{g} g^{ij}  \partial^2_{ij} \eta^\al
- \sqrt{g} g^{ij} g^{kl} \partial_k\eta^\al \partial_l \eta^\mu \partial^2_{ij} \eta_\mu,
\label{Laplacian_eta_identity}
\end{gather}
\begin{align}
\begin{split}
-\Delta_g (\eta^\al \srest \Ga_1) = \cH \circ \eta \, n^\al \circ \eta,
\end{split}
\label{formula_mean_curvature_embedding}
\end{align}
  \begin{equation}
   \partial_{t} (n_\mu \circ \eta )
   = - g^{kl}\partial_{k}v^{\tau}\hat n_{\tau}\partial_{l}\eta_{\mu},
\label{partial_t_normal}
\end{equation}
and
  \begin{equation}
   \partial_{i} (n_\mu \circ \eta )
   = - g^{kl}\partial_{ik}\eta^{\tau}\hat n_{\tau}\partial_{l}\eta_{\mu}.
\label{partial_i_normal}
  \end{equation}
\end{lemma}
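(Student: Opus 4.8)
These are all local identities in the embedded geometry of the hypersurface $\eta(\Ga_1)\subset\RR^3$, so the plan is to prove them one at a time, arranging the work so that the three differential–geometric identities (\ref{Laplacian_eta_identity})–(\ref{formula_mean_curvature_embedding}) reuse a single Gauss-formula computation. First I would establish (\ref{normal_identity}) and (\ref{aT_identity}). The tangent plane of $\eta(\Ga_1)$ at $\eta(x)$ is spanned by $\partial_1\eta,\partial_2\eta$; for any $\xi$ tangent to $\Ga_1$ the vector $\nabla\eta\,\xi$ is tangent to $\eta(\Ga_1)$, and $(a^TN)\cdot(\nabla\eta\,\xi)=N\cdot\xi=0$ because $N=(0,0,1)\perp\Ga_1$, so $a^TN$ is normal to $\eta(\Ga_1)$ and nonzero (as $a$ is invertible); since $\eta$ is close to the identity by Lemma~\ref{L00}, $a^TN$ is outward, and normalizing it gives (\ref{normal_identity}). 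For (\ref{aT_identity}) I would set $G_{\al\be}=\partial_\al\eta\cdot\partial_\be\eta=((\nabla\eta)^T\nabla\eta)_{\al\be}$, so that $\det G=J^2$, $aa^T=G^{-1}$, and $G_{ij}=g_{ij}$ for $i,j\in\{1,2\}$; then Cramer's rule gives $|a^TN|^2=(aa^T)^{33}=(G^{-1})^{33}=g/\det G=g/J^2$, and taking positive square roots yields $J|a^TN|=\sqrt g$.

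Next, for the projection: the orthogonal projection of $\RR^3$ onto the tangent plane of $\eta(\Ga_1)$ is $P^\al{}_\be=g^{kl}\partial_k\eta^\al\partial_l\eta_\be$ — indeed $P^\al{}_\be\partial_m\eta^\be=g^{kl}g_{lm}\partial_k\eta^\al=\partial_m\eta^\al$ and $P$ kills normal vectors, so $P^2=P$ — whence $\Pi=\id-P$ is the projection onto the normal bundle, which is (\ref{projection_identity}), and $\Pi^2=(\id-P)^2=\id-P=\Pi$ is (\ref{projections_contraction}). For (\ref{Laplacian_eta_identity}) I would expand $\sqrt g\,\Delta_g\eta^\al=\partial_i(\sqrt g\,g^{ij}\partial_j\eta^\al)=\sqrt g\,g^{ij}\partial^2_{ij}\eta^\al+\partial_i(\sqrt g\,g^{ij})\,\partial_j\eta^\al$; then, using $\partial_i g_{kl}=\partial^2_{ik}\eta^\mu\partial_l\eta_\mu+\partial_k\eta^\mu\partial^2_{il}\eta_\mu$, the Christoffel identity $\Gamma^{k}_{ij}=g^{kl}\partial^2_{ij}\eta^\mu\partial_l\eta_\mu$, and — combining $\partial_i\sqrt g=\tfrac12\sqrt g\,g^{kl}\partial_i g_{kl}$ with $\partial_i g^{ij}=-g^{ik}g^{jl}\partial_i g_{kl}$ — the standard identity $\partial_i(\sqrt g\,g^{ij})=-\sqrt g\,g^{ik}\Gamma^{j}_{ik}$, I get $\partial_i(\sqrt g\,g^{ij})\partial_j\eta^\al=-\sqrt g\,g^{ik}g^{jl}\partial^2_{ik}\eta^\mu\partial_l\eta_\mu\,\partial_j\eta^\al$, which after relabeling is exactly the second term on the right of (\ref{Laplacian_eta_identity}). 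Rewriting (\ref{Laplacian_eta_identity}) with (\ref{projection_identity}) gives $\Delta_g\eta^\al=g^{ij}\Pi^\al{}_\mu\partial^2_{ij}\eta^\mu$, a normal vector field; since $\Pi^T(n\circ\eta)=n\circ\eta$, contracting with $n_\al\circ\eta$ gives $n_\al\circ\eta\,\Delta_g\eta^\al=g^{ij}h_{ij}$ with $h_{ij}=(n\circ\eta)\cdot\partial^2_{ij}\eta$, and with the sign convention for $\cH$ making (\ref{free_Euler_bry_p}) the Young--Laplace law (i.e.\ $\cH=-g^{ij}h_{ij}$, the sign being fixed by the round-sphere case) this becomes $\Delta_g\eta^\al=-(\cH\circ\eta)(n^\al\circ\eta)$, which is (\ref{formula_mean_curvature_embedding}).

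Finally, (\ref{partial_t_normal}) and (\ref{partial_i_normal}) both follow from differentiating the two relations that characterize $\nu:=n\circ\eta$, namely $\nu\cdot\nu=1$ and $\nu\cdot\partial_j\eta=0$ for $j=1,2$: for $D\in\{\partial_t,\partial_1,\partial_2\}$, differentiating the first shows $D\nu\perp\nu$, so $D\nu_\mu=c^k\partial_k\eta_\mu$ for some coefficients $c^k$, and differentiating the second gives $c^kg_{kj}=-\nu_\tau D(\partial_j\eta^\tau)$, hence $c^k=-g^{kj}\nu_\tau D(\partial_j\eta^\tau)$; taking $D=\partial_t$ (with $\partial_t\partial_j\eta=\partial_j v$) yields (\ref{partial_t_normal}), and taking $D=\partial_i$ yields (\ref{partial_i_normal}) after using the symmetry of $g^{kl}$ and renaming indices, where $\hat n$ abbreviates $n\circ\eta$.

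The hard part will not be any single step — each is an explicit computation — but rather the Laplacian/mean-curvature identity: the cancellation hidden inside $\partial_i(\sqrt g\,g^{ij})$ must be carried out carefully, and the sign of the mean curvature has to be pinned down so as to be consistent with the boundary condition (\ref{free_Euler_bry_p}). One must also take care with the orientation of $a^TN$ in the first step.
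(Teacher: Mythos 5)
Your proposal is correct, and for the first three identities it takes a genuinely different route from the paper. For \eqref{normal_identity}, \eqref{aT_identity}, and \eqref{projection_identity} the paper works from the explicit cofactor matrix \eqref{a_explicit}: it verifies componentwise that $Ja^TN=\partial_1 r\times\partial_2 r$ (so both \eqref{normal_identity} and \eqref{aT_identity} follow from the cross-product formula for the normal and $|\partial_1 r\times\partial_2 r|=\sqrt g$), and it proves \eqref{projection_identity} by expanding $\partial_m\eta_\la a^{3\la}$ term by term and watching everything cancel. You instead argue structurally: $N^Ta\nabla\eta\,\xi=N^T\xi=0$ shows $a^TN$ is normal; $aa^T=G^{-1}$ with $G=(\nabla\eta)^T\nabla\eta$ plus Cramer's rule gives $|a^TN|^2=(G^{-1})^{33}=g/J^2$; and $P^\al{}_\be=g^{kl}\partial_k\eta^\al\partial_l\eta_\be$ is identified as the orthogonal projection onto the tangent plane by checking its action on the spanning set $\{\partial_1\eta,\partial_2\eta,\hat n\}$. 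Your version is shorter and less error-prone (no $3\times3$ determinant bookkeeping), at the cost of a slightly soft orientation argument for the sign of $a^TN$, which is harmless here since $\eta$ stays near the identity ($J\ge 1/2$ by Lemma~\ref{L00}). For \eqref{Laplacian_eta_identity} your expansion of $\partial_i(\sqrt g\,g^{ij})$ is equivalent to the paper's use of \eqref{Laplacian_identity_standard} and \eqref{Christoffel_identity} (and in fact reproduces the identity \eqref{EQ57} that the paper proves separately later); for \eqref{formula_mean_curvature_embedding} the paper simply cites the standard fact, while you sketch the derivation and correctly flag that the sign of $\cH$ is a convention to be matched with \eqref{free_Euler_bry_p}; and for \eqref{partial_t_normal}--\eqref{partial_i_normal} your moving-frame computation is identical to the paper's.
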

\begin{proof} Letting $r = \eta \srest \Ga_1$, we know that $n \circ \eta$ is given by
(see e.g.~\cite{HanIsometricEmbedding})
\begin{gather}
n \circ \eta = \frac{\partial_1 r \times \partial_2 r}{|\partial_1 r \times \partial_2 r|}.
\label{normal_standard_formula}
\end{gather}
By $\det(\nabla \eta) =J$, we have
\begin{align}
\begin{split}
a &=
\\
& \frac{1}{J}
\begin{bmatrix}
\partial_2 \eta^2 \partial_3 \eta^3 - \partial_3 \eta^2 \partial_2 \eta ^3
& \partial_3 \eta^1 \partial_2 \eta^3 - \partial_2 \eta^1 \partial_3 \eta^3
&
\partial_2 \eta^1 \partial_3 \eta^2 - \partial_3 \eta^1 \partial_2 \eta^2
\\
\partial_3 \eta^2 \partial_1 \eta^3 - \partial_1 \eta^2 \partial_3 \eta^3
&
\partial_1 \eta^1 \partial_3 \eta^3 - \partial_3 \eta^1 \partial_1 \eta ^3
&
\partial_3 \eta^1 \partial_1 \eta^2 - \partial_1 \eta^1 \partial_3 \eta^2
\\
\partial_1 \eta^2 \partial_2 \eta^3 - \partial_2 \eta^2 \partial_1 \eta^3
&
\partial_2 \eta^1 \partial_1 \eta^3 - \partial_1 \eta^1 \partial_2 \eta ^3
&
\partial_1 \eta^1 \partial_2 \eta^2 - \partial_2 \eta^1 \partial_1 \eta^2
\end{bmatrix}.
\end{split}
\label{a_explicit}
\end{align}
Using (\ref{a_explicit}) to compute $J a^T N$ and comparing with
 $\partial_1 r \times \partial_2 r$, one verifies that
\begin{gather}
J a^T N = \partial_1 r \times \partial_2 r,
\nonumber
\end{gather}
and then (\ref{normal_identity}) follows from (\ref{normal_standard_formula}).

To prove (\ref{projection_identity}), we use (\ref{normal_identity})
to write
\begin{align}
\begin{split}
(\de^\al_\la - g^{kl} \partial_k \eta^\al \partial_l \eta_\la)
n^\la \circ \eta
= \frac{ a^{\mu \al} N_\mu }{ |a^T N |} -
\frac{g^{kl} \partial_k \eta^\al \partial_l \eta_\la a^{\mu \la} N_\mu }{|a^T N|}.
\end{split}
\nonumber
\end{align}
Contracting $g^{kl} \partial_l \eta_\la a^{\mu \la} N_\mu$
with $g_{mk}$ gives
\begin{align}
\begin{split}
g_{mk} g^{kl}\partial_l \eta_\la a^{\mu \la} N_\mu
= &\, \partial_m \eta_\la a^{3 \la}
\\
 = & \,
\partial_m \eta_1
(\partial_1 \eta^2 \partial_2 \eta^3 - \partial_2 \eta^2 \partial_1 \eta^3)
+
\partial_m \eta_2
(\partial_2 \eta^1 \partial_1 \eta^3 - \partial_1 \eta^1 \partial_2 \eta ^3)
\\
&
+
\partial_m \eta_3
(\partial_1 \eta^1 \partial_2 \eta^2 - \partial_2 \eta^1 \partial_1 \eta^2)
\\
 = & \,  0.
\end{split}
   \label{EQ01}
\end{align}
Above, the first equality follows because $N = (0,0,1)$ (and $g_{mk} g^{kl} = \de_m^l$),
the second equality uses (\ref{a_explicit}), and the third equality follows upon setting
$m=1$ and then $m=2$ and observing that in each case all the terms cancel out.
Thus, contracting \eqref{EQ01} with $g^{m n}$,

\ \vspace{-10pt}
\begin{gather}
g^{nl}\partial_l \eta_\la a^{\mu \la} N_\mu  =  0,
\nonumber
\end{gather}
and hence
\begin{align}
\begin{split}
(\de^\al_\la - g^{kl} \partial_k \eta^\al \partial_l \eta_\la)
n^\la \circ \eta
= \frac{ a^{\mu \al} N_\mu }{ |a^T N |}.
\end{split}
\nonumber
\end{align}
To conclude the proof of \eqref{projection_identity}, we need to verify that $\Pi(X) = 0$
if $X$ is tangent to $\eta(\Ga_1)$. Since the tangent space to  $\eta(\Ga_1)$ is spanned
by $\partial_j \eta$, for $j=1,2$, it suffices to verify the identity
for these vectors. We have
\begin{align}
\Pi_\al^\mu \partial_j \eta_\mu & = (\delta_\al^\mu - g^{kl}\partial_k \eta_\al \partial_l \eta^\mu)
\partial_j \eta_\mu
\nonumber
=
\partial_j \eta_\al -
g^{kl}\partial_k \eta_\al g_{lj}
 = 0,
\end{align}
where we used  $g_{lj} = \partial_l \eta^\mu \partial_j \eta_\mu$ and
$g^{kl} g_{lj} = \delta^k_j$.
Thus, \eqref{projection_identity} is proven.

The identity (\ref{projections_contraction}) follows
from the fact that $\Pi$ is a projection operator
or, alternatively, by direct computation using (\ref{projection_identity}).
The identity (\ref{aT_identity}) follows from (\ref{normal_identity}), (\ref{normal_standard_formula}),
and the standard formula (see e.g.~\cite{HanIsometricEmbedding})
\begin{gather}
\frac{\partial_1 r \times \partial_2 r}{|\partial_1 r \times \partial_2 r|} =
\frac{1}{\sqrt{g}} \partial_1 r \times \partial_2 r.
 \nonumber
 \end{gather}
In order to prove (\ref{Laplacian_eta_identity}), recall that
(see e.g.~\cite{HanIsometricEmbedding})
\begin{gather}
\Delta_g \eta^\al = g^{ij} \partial^2_{ij} \eta^\al - g^{ij} \Ga^k_{ij} \partial_k \eta^\al,
\label{Laplacian_identity_standard}
\end{gather}
where $\Ga^k_{ij}$ are the Christoffel symbols.
Recalling (\ref{metric_def}),
a direct computation using the definition of the Christoffel symbols
gives
\begin{gather}
\Ga_{ij}^k = g^{kl}\partial_l \eta^\mu \partial^2_{ij} \eta_\mu,
\label{Christoffel_identity}
\end{gather}
and (\ref{Laplacian_eta_identity}) follows from (\ref{Laplacian_identity_standard})
and (\ref{Christoffel_identity}).

The identity (\ref{formula_mean_curvature_embedding}) is a standard formula
for the mean curvature of an embedding into $\RR^3$ (see e.g.~\cite{HanIsometricEmbedding} or \cite{GieriBook}).

The identities (\ref{partial_t_normal}) and (\ref{partial_i_normal}) are well-known, but we
provide their proofs for the reader's convenience.  Denote $\hat{n} = n\circ \eta$.
Since $\{ \partial_1 \eta, \partial_2 \eta,
\hat{n} \}$ are linearly independent, we can write
\begin{gather}
\overline{\partial}_A \hat{n} = a^1 \partial_1 \eta + a^2 \partial_2 \eta + b \hat{n}.
\label{partial_n_basis}
\end{gather}
Taking the dot product with $\hat{n}$ we see that $b=0$,
since $\overline{\partial}_A \hat{n} \cdot \hat{n} = 0$ in view
of $\hat{n}\cdot \hat{n} = 1$, and the fact that $\partial_i \eta$ is tangent
to the embedding. Taking the dot product with $\partial_1 \eta$ and $\partial_2 \eta$,
and using the definition (\ref{metric_def}), we obtain
\begin{gather}
\begin{bmatrix}
g_{11} & g_{12} \\
g_{21} & g_{22}
\end{bmatrix}
\left(
\begin{matrix}
a^1 \\
a^2
\end{matrix}
\right) =
\left(
\begin{matrix}
\partial_1 \eta \cdot \overline{\partial}_A \hat{n} \\
\partial_2 \eta \cdot \overline{\partial}_A \hat{n}
\end{matrix}
\right).
\nonumber
\end{gather}
Using $\partial_l \eta \cdot \overline{\partial}_A \hat{n} = -
\overline{\partial}_A \partial_l \eta \cdot \hat{n}$
(which follows from $\partial_l \eta \cdot \hat{n} = 0$)
to eliminate $\overline{\partial}_A \hat{n}$ on the right-hand side,
solving for $a^1$ and $a^2$, and using the result into
(\ref{partial_n_basis}), produces (\ref{partial_t_normal})
when $\overline{\partial}_A = \partial_t$ and
(\ref{partial_i_normal}) when $\overline{\partial}_A = \partial_i$.
 \end{proof}

For future reference, we record the identity
\begin{gather}
 \overline{\partial}_A(\sqrt{g} g^{ij} )  = \sqrt{g}
  \left(\frac{1}{2} g^{ij}   g^{kl} - g^{lj} g^{ik} \right)  \overline{\partial}_A g_{kl},
 \label{EQ36}
\end{gather}
which follows from the well-known identities (see e.g.~\cite{GieriBook}),
\begin{gather}
\overline{\partial}_A g = g g^{kl} \overline{\partial}_A g_{kl},
\nonumber
\end{gather}
and
\begin{gather}
 \overline{\partial}_A g^{ij} = - g^{lj} g^{ik}  \overline{\partial}_A g_{kl}.
\nonumber
\end{gather}

We also need the following result about a gain or regularity of the moving boundary.

\begin{notation}
From here on, we use $P(\cdot)$, with indices attached when appropriate, to denote a general polynomial expression of its arguments.
\label{notation_polynomial}
\end{notation}

\begin{proposition}
Assume that that conditions of Lemma \ref{L00} are valid.
We have the estimate
\begin{gather}
\norm{\eta}_{3.5,\Ga_1} \leq P(\norm{R}_{1.5,\Ga_1}).
\nonumber
\end{gather}
\label{proposition_regularity}
\end{proposition}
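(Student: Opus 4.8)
The plan is to read off the regularity of $\eta\srest\Ga_1$ from the boundary condition \eqref{Lagrangian_bry_q}, which can be rewritten using \eqref{aT_identity} and \eqref{Laplacian_eta_identity} as a second-order elliptic equation for $\eta^\al$ on $\Ga_1$ with the pressure (equivalently, a function of $R$) as forcing. Concretely, \eqref{Lagrangian_bry_q} together with $J|a^TN| = \sqrt g$ gives
\begin{gather}
\si \sqrt{g}\, g^{ij}\partial^2_{ij}\eta^\al
= \si \sqrt{g}\, g^{ij} g^{kl}\partial_k\eta^\al \partial_l\eta^\mu \partial^2_{ij}\eta_\mu
- J\, a^{\mu\al}N_\mu\, q(R)
\qquad\text{on }\Ga_1,
\nonumber
\end{gather}
so that $\eta^\al$ satisfies $\si\,\mathcal{A}^{ij}\partial^2_{ij}\eta^\al = F^\al$ with $\mathcal{A}^{ij} = \sqrt g\, g^{ij}$ a uniformly elliptic, $H^{1.5}(\Ga_1)$ coefficient matrix (by Lemma~\ref{L00}, Remark~\ref{remark_rational}, and \eqref{rational_estimate_2}) and right-hand side $F^\al$ expressed through $q$, $R$, $a$, $J$, and first/second derivatives of $\eta$ along $\Ga_1$.

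Next I would apply elliptic regularity on the closed surface $\Ga_1\cong\TT^2$: the gain is $\norm{\eta}_{3.5,\Ga_1}\les \norm{F}_{1.5,\Ga_1} + \norm{\eta}_{1.5,\Ga_1}$, where the constant depends on the ellipticity constant and the $H^{1.5}$ norm of the coefficients, both controlled by $M$ via Lemma~\ref{L00}. The $\norm{\eta}_{1.5,\Ga_1}$ term and, in fact, all occurrences of $\eta$, $a$, $J$, $g$, $\sqrt g$, $g^{-1}$ in $F$ up to two boundary derivatives of $\eta$ are bounded by constants (depending on $M$) by Lemma~\ref{L00}(i),(ii),(viii),(ix) and the rational-function estimates of Remark~\ref{remark_rational}; the key point is that $F$ is \emph{linear} in the genuinely top-order object $\partial^2_{ij}\eta$, and that top-order piece appears in $F$ only contracted against the projection-type tensor $\de^\al_\mu - g^{kl}\partial_k\eta^\al\partial_l\eta_\mu$, whose $H^{1.5}$ norm is again $O_M(1)$. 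So after moving that term appropriately, the only factor in $F$ not a priori bounded by a constant is $q(R)$, contributing $\norm{q(R)}_{1.5,\Ga_1}$, which by the smoothness of $q$ and a composition (Moser-type) estimate on $\Ga_1$ is bounded by a polynomial in $\norm{R}_{1.5,\Ga_1}$. Collecting everything yields $\norm{\eta}_{3.5,\Ga_1}\le P(\norm{R}_{1.5,\Ga_1})$, as claimed.

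The main obstacle is handling the quasilinear term $\si\sqrt g\, g^{ij}g^{kl}\partial_k\eta^\al\partial_l\eta^\mu\partial^2_{ij}\eta_\mu$: since $\partial^2_{ij}\eta$ lives only in $H^{1.5}$ a priori (one less derivative than what we want to prove), one cannot simply treat this as bounded forcing — it genuinely carries the same top order as the left side. The resolution is that this term, contracted with the full equation, assembles into the surface Laplacian acting through the \emph{tangential} projection, so that testing the equation against the normal direction (or equivalently using \eqref{formula_mean_curvature_embedding}, $-\Delta_g\eta^\al = \mathcal{H}\,n^\al\circ\eta$, and the identity \eqref{projection_identity}) isolates the normal component $\si\sqrt g\,|a^TN|\,\mathcal H\, n^\al\circ\eta$, whose only undifferentiated-to-order-three content is $q(R)$; the tangential components of $\eta$ along $\Ga_1$ are then recovered from the others via the first-fundamental-form relations $g_{ij} = \partial_i\eta\cdot\partial_j\eta$, which are themselves $O_M(1)$ in $H^{1.5}$, so they never cost top-order regularity. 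One must be careful that all the coefficient estimates are uniform in time on $[0,T]$ with $TM\le 1/C$, which is exactly the content of Lemma~\ref{L00} and the smallness argument in Remark~\ref{remark_rational}; and that the composition estimate $\norm{q(R)}_{1.5,\Ga_1}\le P(\norm{R}_{1.5,\Ga_1})$ uses $1.5 > 1 = \tfrac12\dim\Ga_1$ so that $H^{1.5}(\Ga_1)$ is an algebra embedding into $C^0$.
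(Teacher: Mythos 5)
There is a genuine gap, and it lies exactly where you located the ``main obstacle.'' By expanding $\Delta_g\eta^\al$ via \eqref{Laplacian_eta_identity} and putting the Christoffel contribution $\si\sqrt{g}\,g^{ij}g^{kl}\partial_k\eta^\al\partial_l\eta^\mu\partial^2_{ij}\eta_\mu$ into the forcing $F^\al$, you create a right-hand side of the same order as the left-hand side, and your proposed repair does not close. Projecting onto the normal isolates one scalar quantity, essentially $\hat n_\mu g^{ij}\partial^2_{ij}\eta^\mu=-q/\si$, but the claim that the tangential part of $\partial^2_{ij}\eta$ is then ``recovered from $g_{ij}=\partial_i\eta\cdot\partial_j\eta$ at no top-order cost'' is circular: the tangential projection of $\partial^2_{ij}\eta$ is exactly $\Ga^k_{ij}\partial_k\eta$ with $\Ga^k_{ij}=g^{kl}\partial_l\eta^\mu\partial^2_{ij}\eta_\mu$ (cf.\ \eqref{Christoffel_identity}), so placing it in $H^{1.5}(\Ga_1)$ requires $\partial g_{ij}\in H^{1.5}(\Ga_1)$, i.e.\ $\eta\in H^{3.5}(\Ga_1)$ --- precisely the conclusion being proved. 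Knowing only $\norm{g_{ij}}_{1.5,\Ga_1}\le C$ gives $\eta$ in $H^{2.5}(\Ga_1)$ at best, which is already available from $\norm{\eta}_3\le C$ and the trace theorem.

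The way out, and the route the paper takes, is to never leave the divergence form \eqref{Laplacian_def}: combining \eqref{Lagrangian_bry_q} with \eqref{aT_identity}, each scalar component satisfies $\partial_i(\sqrt{g}\,g^{ij}\partial_j\eta^\al)=-\si^{-1}Ja^{\mu\al}N_\mu q$ on $\Ga_1$, with \emph{no} second derivatives of $\eta$ left on the right-hand side --- the quasilinear term you struggled with is exactly absorbed by the derivative falling on the coefficients $\sqrt{g}\,g^{ij}$. The right-hand side is then genuinely lower order and bounded in $H^{1.5}(\Ga_1)$ by $P(\norm{R}_{1.5,\Ga_1})$, as in your composition argument. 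The residual difficulty (which your appeal to ``standard elliptic regularity with $H^{1.5}$ coefficients'' also glosses over) is that the coefficients $\sqrt{g}\,g^{ij}$ are only in $H^{1.5}(\Ga_1)\subset C^{0,\al}(\Ga_1)$, which is below the threshold for a naive bootstrap to $H^{3.5}$; the paper handles this by verifying that $g^{ij}$ has small mean oscillation on small balls and invoking the elliptic estimates for coefficients with small BMO seminorm from \cite{DongKimEllipticBMOHigerOrder}, which yield $\norm{\eta^\al}_{3.5,\Ga_1}\le C(\norm{a^{\mu\al}N_\mu q}_{1.5,\Ga_1}+\norm{\eta}_{1.5,\Ga_1})$ with $C$ depending only on $\norm{g_{ij}}_{1.5,\Ga_1}$.
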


\begin{proof}
We would like to apply elliptic estimates to (\ref{Lagrangian_bry_q}). While
we do not know a~priori that the coefficients $g_{ij}$
 have enough regularity for an application of standard elliptic estimates, we
can use improved estimates for coefficients with lower regularity
as in \cite{DongKimEllipticBMOHigerOrder}. For this, it suffices to check that
$g_{ij}$ has small oscillation, in the following sense.

Given $ r>0$ and $x \in \Ga_1$, set
\begin{gather}
\operatorname{osc}_x (g^{ij}) =
\frac{1}{\vol(B_r(x))} \int_{B_r(x)} \biggl | g^{ij}(y) -
\frac{1}{\vol(B_r(x)) } \int_{B_r(x)} g^{ij} (z)\, dz \biggr | \, dy
\nonumber
\end{gather}
and
\begin{gather}
g_R = \sup_{x \in \Ga_1} \sup_{r \leq R} \operatorname{osc}_x (g^{ij}) .
\nonumber
\end{gather}
We need to verify that there exists  $\widetilde{R} \leq 1$ such that
\begin{gather}
g_{\widetilde{R}}  \leq \rho,
\label{oscillation_condition}
\end{gather}
where $\rho$ is sufficiently small.

Since $g^{ij} \in H^{1.5}(\Ga_1)$, we have  $g^{ij} \in C^{0,\al}(\Ga_1)$ with
$0< \al < 0.5$ fixed. Thus, for $y \in B_r(x)$,
\begin{align}
\begin{split}
\biggl | g^{ij}(y) -
\frac{1}{\vol(B_r(x)) } \int_{B_r(x)} g^{ij} (z)\, dz \biggr |
= &
\biggl |
\frac{1}{\vol(B_r(x)) } \int_{B_r(x)} ( g^{ij}(y) -
 g^{ij} (z) )\, dz \biggr|
 \\
 \leq &
 \sup_{z\in B_r(x)}
 |g^{ij}(y) -
 g^{ij} (z) |
 \leq
C_{\alpha} r^\al.
\end{split}
\nonumber
\end{align}
Hence,
\begin{gather}
g_{\widetilde{R}} \leq C_{\alpha} R^\al,
\nonumber
\end{gather}
and we can ensure (\ref{oscillation_condition}). Therefore, the results of
\cite{DongKimEllipticBMOHigerOrder} imply that
\begin{align}
\begin{split}
\norm{ \eta^\al }_{3.5,\Ga_1}
\leq & C( \norm{a^{\mu \al} N_\mu q }_{1.5,\Ga_1}
+ \norm{\eta^{\alpha}}_{1.5,\Ga_1} )
\\
\leq   & C (\norm{a}_{1.5,\Ga_1} \norm{q}_{1.5,\Ga_1} + \norm{\eta}_{1.5,\Ga_1} ),
\end{split}
\nonumber
\end{align}
where $C$ depends on $\norm{ g_{ij} }_{1.5,\Ga_1}$. Or yet,
\begin{align}
\begin{split}
\norm{ \eta^\al }_{3.5,\Ga_1}
\leq & C  \norm{ q }_{1.5,\Ga_1}
+ C\norm{\eta}_3
\leq   C  \norm{q}_{1.5,\Ga_1} + C
\leq  P(\norm{R}_{1.5,\Ga_1}).
\end{split}
\nonumber
\end{align}

We remark that \cite{DongKimEllipticBMOHigerOrder}
deals only with Sobolev spaces of integer order, but since the estimates
are linear on the norms we can extend them to fractional order Sobolev spaces as well.
\end{proof}

\begin{corollary}
Under the same assumptions of Proposition \ref{proposition_regularity},
\begin{gather}
\norm{\eta}_{4.5,\Ga_1} \leq P(\norm{R}_{2.5,\Ga_1}).
   \label{EQ69}
\end{gather}
\label{corollary_regularity}
\end{corollary}
\begin{proof}
Since $g_{ij}$ involves only tangential derivatives of $\eta$, by Proposition
\ref{proposition_regularity} we have an estimate for $g^{ij}$ in $H^{2.5}(\Ga_1)$.
We can thus use elliptic regularity to bootstrap the estimate on $\eta$ restricted
to $\Ga_1$ to $H^{4.5}(\Ga_1)$.
\end{proof}

We conclude this section with a compressible version of the Cauchy invariance
(see, e.g., \cite{KukavicaTuffahaVicol-3dFreeEuler} for the incompressible case).

\begin{proposition}
\label{P09}
Let $(v,R)$ be a smooth solution to (\ref{Lagrangian_free_Euler_system}) defined on $[0,T)$. Then
\begin{gather}
\varepsilon^{\al\be\ga} \partial_\be v^\mu \partial_\ga \eta_\mu =
\omega_0^\al
\label{Cauchy_invariance}
\end{gather}
for $0\leq t < T$. Here, $\varepsilon^{\al\be\ga}$ is the totally anti-symmetric symbol with $\varepsilon^{123}=1$
and $\omega_0$ is the vorticity at time zero.
\end{proposition}
\begin{proof}
Compute
\begin{align}
\begin{split}
\partial_t( \varepsilon^{\al\be\ga} \partial_\be v^\mu \partial_\ga \eta_\mu ) & =
\varepsilon^{\al\be\ga} \partial_\be v^\mu \partial_\ga v_\mu +
\varepsilon^{\al\be\ga} \partial_\be \partial_t v^\mu \partial_\ga \eta_\mu
=
\varepsilon^{\al\be\ga} \partial_\be \partial_t v^\mu \partial_\ga \eta_\mu
\\
& =
-\frac{1}{R} \varepsilon^{\al\be\ga} \partial_\be (a^{\la \mu} \partial_\la q) \partial_\ga \eta_\mu +
\frac{1}{R^2} \varepsilon^{\al\be\ga} a^{\la \mu }\partial_\la q \partial_\be R \partial_\ga \eta_\mu,
\end{split}
\nonumber
\end{align}
where we used the anti-symmetry of $\varepsilon^{\al\be\ga}$ and (\ref{Lagrangian_free_Euler_eq}).
From $a \nabla \eta = I$, we have
\begin{gather}
\partial_\be(a^{\la \mu} \partial_\ga \eta_\mu) = \partial_\be a^{\la\mu} \partial_\ga \eta_\mu + a^{\la \mu} \partial_\ga \partial_\be \eta_\mu
= 0,
\nonumber
\end{gather}
and thus
\begin{align}
\begin{split}
\partial_t( \varepsilon^{\al\be\ga} \partial_\be v^\mu \partial_\ga \eta_\mu ) & =
\frac{1}{R}  \partial_\la q a^{\la\mu} \varepsilon^{\al\be\ga}  \partial_\be \partial_\ga \eta_\mu
-\frac{1}{R} \varepsilon^{\al\be\ga} a^{\la \mu} \partial_\ga \eta_\mu \partial_\be  \partial_\la q
\\
&+
\frac{1}{R^2} \varepsilon^{\al\be\ga} a^{\la \mu }\partial_\la q \partial_\be R \partial_\ga \eta_\mu
\\
& = 0 -\frac{1}{R} \varepsilon^{\al\be\lambda}  \partial_\be  \partial_\la q
+
\frac{1}{R^2} \varepsilon^{\al\be\ga} a^{\la \mu }\partial_\lambda q \partial_\be R \partial_\ga \eta_\mu
\\
& = \frac{1}{R^2} \varepsilon^{\al\be\ga} \partial_\gamma q \partial_\be R
= q'(R)\frac{1}{R^2} \varepsilon^{\al\be\ga} \partial_\gamma R \partial_\be R
=0,
\end{split}
\nonumber
\end{align}
where we used again the anti-symmetry of $\varepsilon^{\al\be\ga}$ and
the identity $a^{\la \mu} \partial_\ga \eta_\mu = \delta^\la_\ga$.
Integrating in time yields the result.
\end{proof}

\section{Energy estimates\label{section_energy}}
\label{sec3}
In this section we derive estimates for $v$, $R$, $v \cdot N$, and their time derivatives.

\begin{assumption}{\it
Throughout this section, we suppose that the hypotheses of\break
Lemma~\ref{L00} hold; we make frequent use
of its conclusions without mentioning it every time.
The reader is also reminded of (\ref{eta_dot}), which is often
going to be used without mention
as well.
 We assume further that $T$ is as in part
(ix) of that lemma, and that $(v,q)$ are defined on $[0,T)$.
\label{assumption_lemma_auxiliary}}
\end{assumption}

\begin{notation}
We  use $\tilde{\epsilon} $ to denote a small positive constant
which may vary from expression to expression. Typically, $\tilde{\epsilon}$
comes from choosing the time sufficiently small, from Lemma~\ref{L00},
 or from the
Cauchy inequality with epsilon. The important point to keep in mind, which
can be easily verified in the expressions containing $\tilde{\epsilon}$,
is that once all estimates
are obtained, we can fix $\tilde{\epsilon}$ to be sufficiently small in order to close the estimates.
\end{notation}

\begin{notation}
Recalling Notation \ref{notation_polynomial}, we denote

\ \vspace{-10pt}
\begin{align}
\begin{split}
\ccP = P(
&
\norm{v}_3,  \norm{\partial_t v}_{2}, \norm{\partial^2_t v}_{1},
\norm{\partial^3_t v}_0,  \norm{R}_3,  \norm{\partial_t R}_{2},
\\
&
 \norm{\partial^2_t R}_1, \norm{\partial^3_t R}_0,
 \norm{ \Pi  \overline{\partial}\partial^2_t v}_{0,\Ga_1},
 \norm{ \Pi  \overline{\partial}{}^2\partial_t v}_{0,\Ga_1}
  )
  \end{split}
\nonumber
\end{align}
and
\begin{gather}
\ccP_0 = P\left( \si, \frac{1}{\si}, \norm{v_0}_3, \norm{ v_0}_{3,\Ga_1}, \norm{ \varrho_0}_3,
\norm{\varrho_0}_{3,\Ga_1},
\norm{(\Delta \dive v_0 )\srest \Ga_1 }_{-1,\Ga_1}\right),
\nonumber
\end{gather}
where we abbreviate
  \begin{equation}
   \Vert \Pi \overline{\partial} \partial_{t}^2 v\Vert_{0,\Gamma_1}^2
   =
     \int_{\Gamma_1}
     \de^{ij} \de^{\be\ga}
    \Pi_{\mu\ga}
    \partial_{j}\partial_{t}^{2}v^{\mu}
    \Pi_{\beta\alpha}
    \partial_{i} \partial_{t}^{2}  v^{\alpha}
   =
     \int_{\Gamma_1}
    \Pi_{\mu}^{\beta}
    \partial^{i}\partial_{t}^{2}v^{\mu}
    \Pi_{\beta}^{\alpha}
    \partial_{i} \partial_{t}^{2}  v_{\alpha}
   .
\nonumber
  \end{equation}
\end{notation}

\begin{notation}
We shall use the following abbreviated notation:
\begin{align}
\begin{split}
\ccN(t) \equiv
\ccN & = \norm{v}^2_3 + \norm{\partial_t v}^2_{2} + \norm{\partial^2_t v}^2_{1}
+ \norm{\partial^3_t v}^2_0
+ \norm{R}^2_3 + \norm{\partial_t R}^2_{2} + \norm{\partial^2_t R}^2_1
 \\
 & + \norm{\partial^3_t R}^2_0
 +
 \norm{ \Pi  \overline{\partial}\partial^2_t v}^2_{0,\Ga_1}
 +\norm{ \Pi  \overline{\partial}{}^2\partial_t v}^2_{0,\Ga_1}.
 \end{split}
 \nonumber
\end{align}
\end{notation}

Before starting with a~priori estimates, we record an additional
regularity of $\eta$ which is combined below with
Corollary~\ref{proposition_regularity}
and Proposition~\ref{P09}.
As in~\cite{KukavicaTuffahaVicol-3dFreeEuler},
Proposition~\ref{P09} implies
  \begin{align}
   \begin{split}
   \Vert \curl \eta\Vert_{H^{2.5+\delta}}
   &\leq
   \Vert \eta\Vert_{H^{3}}
   + \tilde \epsilon \Vert \eta\Vert_{H^{3.5+\delta}}
   + C\int_{0}^{t}
       \Vert v\Vert_{H^{3}}
       \Vert \eta\Vert_{H^{3.5+\delta}}
   + C t \Vert \omega_0\Vert_{H^{2.5+\delta}}
   ,
   \end{split}
   \label{EQ29}
  \end{align}
where $\delta\in(0,0.5]$.
In order to control the divergence of $\eta$, we start with
$A^{\alpha \beta}\partial_{\alpha}\eta_{\beta}=3J$, which leads to
  \begin{align}
   \begin{split}
   \dive \eta
    = 3 J
      + (\delta^{\alpha\beta}-A^{\alpha\beta})
         (\partial_{\alpha}\eta-\delta_{\alpha\beta})
      + (3-\Tr A)
   .
   \end{split}
\notag
  \end{align}
Now, let
  \begin{equation}
   \tilde \eta=\eta-x
   .
\notag
  \end{equation}
Using \eqref{a_explicit}, we have
  \begin{align}
   \begin{split}
   \Tr A=3
    &   =
   \partial_2 \tilde\eta^2 \partial_3 \tilde\eta^3 - \partial_3 \tilde\eta^2 \partial_2 \tilde\eta ^3
   + \partial_1 \tilde\eta^1 \partial_3 \tilde\eta^3 - \partial_3 \tilde\eta^1 \partial_1 \tilde\eta ^3
   \\&\indeq
   + \partial_1 \tilde\eta^1 \partial_2 \tilde\eta^2 - \partial_2 \tilde\eta^1 \partial_1 \tilde\eta^2
   - 2\dive \eta + 3
    ,
   \end{split}
\notag
  \end{align}
which gives
  \begin{align}
   \begin{split}
    \dive\tilde \eta
    &=
    J-1
    + \frac13 (\delta^{\alpha\beta}-A^{\alpha\beta})\partial_{\alpha}\tilde\eta_{\beta}
    \\&
    - \frac13
    \Bigl(
      \partial_2 \tilde\eta^2 \partial_3 \tilde\eta^3 - \partial_3 \tilde\eta^2 \partial_2 \tilde\eta ^3
      + \partial_1 \tilde\eta^1 \partial_3 \tilde\eta^3 - \partial_3 \tilde\eta^1 \partial_1 \tilde\eta ^3
      + \partial_1 \tilde\eta^1 \partial_2 \tilde\eta^2 - \partial_2 \tilde\eta^1 \partial_1 \tilde\eta^2
    \Bigr)
   .
   \end{split}
   \label{EQ68}
  \end{align}
Now, by \eqref{a_explicit}, the entries
of $\delta^{\alpha \beta}-A^{\alpha\beta}$ are either of the form
$\partial \tilde \eta \partial \tilde \eta$ or of the form
$\partial \tilde \eta \partial \tilde \eta+\partial \tilde\eta$.
Differentiating \eqref{EQ68}, we get
  \begin{align}
   \begin{split}
    \Vert \nabla\dive\tilde \eta\Vert_{H^{1.5+\delta}}
     &\leq
      C + \Vert J\Vert_{H^{2.5+\delta}}
      +
    \Vert \nabla\dive\tilde \eta\Vert_{H^{1.5+\delta}}
    \int_{0}^{t}\cP
   \end{split}
   \label{EQ67}
  \end{align}
and thus
  \begin{align}
   \begin{split}
    \Vert \dive \eta\Vert_{H^{2.5+\delta}}
     &\leq
      C + \Vert J\Vert_{H^{2.5+\delta}}
      +
    \Vert \nabla\dive \eta\Vert_{H^{1.5+\delta}}
    \int_{0}^{t}\cP
    + \int_{0}^{t}\cP
   .
   \end{split}
   \label{EQ67}
  \end{align}

\subsection{Three time derivatives}
In this section we derive the estimate
\begin{align}
\begin{split}
\norm{ \partial^3_t v }^2_0
+ \norm{ \partial^3_t R }^2_0
+  \norm{ \Pi  \overline{\partial}\partial^2_t v}^2_{0,\Ga_1}
\leq &
 \,
 \tilde{\epsilon} \ccN
+ \ccP_0
 + \ccP \int_0^t \ccP,
\end{split}
\label{partial_3_t_v_estimate}
\end{align}
where we recall that $\Pi$ is given by (\ref{projection_identity}).

\subsubsection{Energy identity}

We begin by establishing the identity
  \begin{align}
  \begin{split}
   &
   \frac12
   \frac{d}{dt}
   \int_\Om
   R(0)
   \partial_{t}^{3}v^{\beta}
   \partial_{t}^{3}v_{\beta}
   +
   \frac12
   \frac{d}{dt}
   \int_\Om
   \frac{R(0)}{R}
 \bar{q}^\prime(R)
    (\partial_{t}^{3} R)^2
   +
   \int_{\Gamma_1}^{}
   \partial_{t}^3(J a^{\alpha \beta} q)
   \partial_{t}^3v_{\beta}
   N_{\alpha}
\\&\indeq
   =
   -
   \int_\Om
   \frac{R(0)}{R}
   \Bigl(
   \partial_{t}^3(R a^{\alpha \beta} \partial_{\alpha}v_{\beta})
   -
   R a^{\alpha \beta} \partial_{t}^{3}\partial_{\alpha}v_{\beta}
   \Bigl)
   \partial_{t}^3\left(
                  \frac{q}{R}
                 \right)
  \\&\indeq\indeq
   +
   \int_\Om
   R(0)
   \left(
   \partial_{t}^3
    \left(a^{\alpha\beta}\frac{q}{R}\right)
    -
    a^{\alpha\beta}
    \partial_{t}^{3}\left(\frac{q}{R}\right)
   \right)
   \partial_{t}^3\partial_{\alpha}v_{\beta}
  \\&\indeq\indeq
   -
   3
   \int_{\Omega}
   R(0)
   \frac{
    \bar q''(R)
       }{
    R
   }
   \partial_{t}^{4}R
   \partial_{t}^{2}R
   \partial_{t}R
   -
   \int_{\Omega}
   R(0)
   \frac{
    \bar q'''(R)
       }{
    R
   }
   \partial_{t}^{4}R
   (\partial_{t}R)^{3}
  \\&\indeq\indeq
  +
  \frac12
  \int_\Om
   R(0)
   \partial_{t}
   \left(
    \frac{\bar q'(R)}{R}
   \right)
   (\partial_{t}^{3}R)^2,
  \end{split}
   \label{EQ03}
  \end{align}
where
  \begin{equation}
   \bar q(R) = \frac{q(R)}{R}
   .
   \nonumber
  \end{equation}
To obtain it,
we first multiply
\eqref{Lagrangian_free_Euler_eq} by $J$
(replacing $\alpha$ with $\beta$),
differentiate three times in $t$, contract with
$\partial_{t}^3 v_{\beta}$, and integrate. We obtain
  \begin{equation}
   \int_\Om
   \partial_{t}^3 (J R \partial_{t} v^{\beta})   \partial_{t}^3 v_{\beta}
   +
   \int_\Om
   \partial_{t}^3
   ( J a^{\alpha\beta}\partial_{\alpha}q)
   \partial_{t}^3 v_{\beta}
   =0
   .
   \nonumber
  \end{equation}
Using the Piola identity (\ref{div_identity}) and integrating by parts in
$\partial_{\alpha}$, we get
  \begin{align}
   &
   \frac12
   \frac{d}{dt}
   \int_\Om
   R(0)
   \partial_{t}^3 v^{\beta}
   \partial_{t}^3 v_{\beta}
   +
   \int_{\Gamma_1}
   \partial_{t}^3(J a^{\alpha\beta} q)
   \partial_{t}^3 v_{\beta} N_{\alpha}
   =
   \int_\Om
   \partial_{t}^3
   ( J a^{\alpha\beta}q)
   \partial_{t}^3 \partial_{\alpha}v_{\beta},
   \nonumber
  \end{align}
where we also used (\ref{Lagrangian_free_Euler_J_rho}),  that $R(0) = \varrho_0$, and the fact that the boundary integral vanishes on $\Gamma_0$.

Now we write
  \begin{align}
  \begin{split}
&   \int_\Om
   \partial_{t}^3
   ( J a^{\alpha\beta}q)
   \partial_{t}^3 \partial_{\alpha}v_{\beta}
   =
   \int_\Om
    R(0)
    \partial_{t}^{3}
    \left(
    a^{\alpha\beta} \frac{q}{R}
    \right)
    \partial_{t}^3 \partial_{\alpha}v_{\beta}
   \\&\indeq
   =
   \int_\Om
    R(0)
    a^{\alpha\beta}
    \partial_{t}^{3}
    \left(
     \frac{q}{R}
    \right)
    \partial_{t}^3 \partial_{\alpha}v_{\beta}
   +
   \int_\Om
   R(0)
   \left(
    \partial_{t}^3 \left(a^{\alpha\beta} \frac{q}{R}\right)
    -
    a^{\alpha\beta}
      \partial_{t}^3\left(
                        \frac{q}{R}
                    \right)
   \right)
   \partial_{t}^3\partial_{\alpha}v_{\beta}
\\&\indeq
   =
   \int_\Om
   \frac{R(0)}{R}
   \partial_{t}^3 (R a^{\alpha\beta}
                        \partial_{\alpha}v_{\beta}
                  )
    \partial_{t}^{3}
    \left(
     \frac{q}{R}
    \right)
    \\
    &\indeq
   -
   \int_\Om
   \frac{R(0)}{R}
   \Bigl(
     \partial_{t}^3
          (
            Ra^{\alpha\beta}
            \partial_{\alpha}v_{\beta}
          )
     -
            Ra^{\alpha\beta}
            \partial_{t}^{3}\partial_{\alpha}v_{\beta}
   \Bigr)
          \partial_{t}^{3}
           \left(
            \frac{q}{R}
           \right)
\\&\indeq\indeq
   +
   \int_\Om
   R(0)
   \left(
    \partial_{t}^3 \left(a^{\alpha\beta} \frac{q}{R}\right)
    -
    a^{\alpha\beta}
      \partial_{t}^3\left(
                        \frac{q}{R}
                    \right)
   \right)
   \partial_{t}^3\partial_{\alpha}v_{\beta}
\\&\indeq
   = \cI_1+\cI_2+\cI_3    .
   \end{split}
   \nonumber
  \end{align}
The terms $\cI_2$ and $\cI_3$ correspond to
the first and second terms
on the right side of \eqref{EQ03} respectively.
To handle $\cI_1$,
we use the density equation
(\ref{Lagrangian_free_density_Euler_eq})
to eliminate
the spatial derivative:
  \begin{equation}
   \cI_1
   =   \int_\Om
   \frac{R(0)}{R}
   \partial_{t}^3 (R a^{\alpha\beta}
                        \partial_{\alpha}v_{\beta}
                  )
    \partial_{t}^{3}
    \left(
     \frac{q}{R}
    \right)
   =
   -
   \int_\Om
   \frac{R(0)}{R}
    \partial_{t}^{4}R
    \partial_{t}^{3}
     \bar q
   .
   \nonumber
  \end{equation}
Since
  \begin{equation}
   \partial_{t}^{3} (\bar q(R))
   =
   \bar q'(R) \partial_{t}^{3}R
   + 3 \bar q''(R) \partial_{t}^{2}R \partial_{t}R
   + \bar q'''(R) (\partial_{t}R)^{3},
   \nonumber
  \end{equation}
we have
  \begin{align}
   \begin{split}
     \cI_1
      &=
    -
        \int_{\Omega}
        R(0) \frac{\bar q'(R)}{R}
      \partial_{t}^{4} R \partial_{t}^{3} R
    -
     3\int_{\Omega}
      R(0) \frac{
            \bar q''(R)
               }{
            R
           }
       \partial_{t}^{4} R \partial_{t}^2 R \partial_{t}R
   \\&
      -
   \int_{\Omega}
    R(0)
    \frac{\bar q'''(R)}{R}
       \partial_{t}^{4} R (\partial_{t}R)^{3}
   \\&
    =
    \cI_{11}
    +     \cI_{12}
    +     \cI_{13}
   .
   \end{split}
   \nonumber
  \end{align}
The terms $\cI_{12}$ and $\cI_{13}$ give the third and the fourth
terms on
the right side of \eqref{EQ03}.
For $\cI_{11}$, we write
  \begin{align}
   \begin{split}
    \cI_{11}
      &=
      -
      \frac12
      \frac{d}{dt}
      \int_{\Omega}
       R(0)
       \frac{\bar q'(R)}{R}
       ( \partial_{t}^{3} R)^2
     +
     \frac12
     \int_{\Omega}
      R(0)
      \partial_{t}\left(
                    \frac{\bar q'(R)}{R}
                  \right)
       ( \partial_{t}^{3} R)^2
    .
   \end{split}
   \label{EQ19}
  \end{align}
The first term on the right side leads to the
second term on the left side of \eqref{EQ03},
while the second term on the right side of \eqref{EQ19}
gives the last term in \eqref{EQ03}.

Denote the terms on the right side
of \eqref{EQ03} by $\cJ_1$--$\cJ_5$.

\subsubsection{Estimates of $\cJ_1$, $\cJ_3$, $\cJ_4$, and $\cJ_5$\label{section_J_1_5}}

In this section we estimate $\cJ_1$, $\cJ_3$, $\cJ_4$, and $\cJ_5$.
We begin with
  \begin{equation}
   \cJ_1
   =
   -
   \int_\Om
   \frac{R(0)}{R}
   \Bigl(
   \partial_{t}^3(R a^{\alpha \beta} \partial_{\alpha}v_{\beta})
   -
   R a^{\alpha \beta} \partial_{t}^{3}\partial_{\alpha}v_{\beta}
   \Bigl)
   \partial_{t}^3\left(
                  \frac{q}{R}
                 \right)
   .
  \label{EQ21}
  \end{equation}
First observe that
  \begin{equation}
    \left
    \Vert
      \partial_{t}^{3}
      \left(
       \frac{q}{R}
      \right)
    \right
    \Vert_{L^2(\Om)}
    \le
    P(
      \Vert \partial_{t}^{3} R\Vert_{L^2(\Om)},
      \Vert \partial_{t}^2 R\Vert_{L^2(\Om)},
      \Vert \partial_{t}R\Vert_{L^2(\Om)},
      \Vert R\Vert_{L^2(\Om)}
     )
    \le
    \ccP.
    \nonumber
  \end{equation}
When the expression in
parentheses in (\ref{EQ21}) involving three
time derivatives
is expanded and one of them canceled,
we obtain eight terms, which are
all bounded in a similar way.
For instance, we have
  \begin{equation}
   \Vert \partial_{t}^{3}Ra^{\alpha\beta}\partial_{\alpha}v_{\beta}\Vert_{L^2(\Om)}
   \le
   C\Vert \partial_{t}^{3}R\Vert_{L^2(\Om)}
   \Vert a^{\alpha\beta}\Vert_{L^\infty(\Om)}
   \Vert \partial_{\alpha}v_{\beta}\Vert_{L^\infty(\Om)}
   \le
   \ccP
   \nonumber
  \end{equation}
and
  \begin{equation}
   \Vert
   R \partial_{t}^{3}a^{\alpha\beta}\partial_{\alpha}v_{\beta}
   \Vert_{L^2(\Om)}
   \le
   C\Vert R\Vert_{L^\infty(\Om)}
   \Vert \partial_{t}^{3}a^{\alpha\beta}\Vert_{L^2(\Om)}
   \Vert \partial_{\alpha}v_{\beta}\Vert_{L^\infty(\Om)}
   \le
   \ccP,
   \nonumber
  \end{equation}
as well as
  \begin{equation}
   \Vert \partial_{t}^{3}R a^{\alpha\beta}\partial_{\alpha}v_{\beta}\Vert_{L^2(\Om)}
   \le
   C\Vert \partial_{t}^2R\Vert_{L^4(\Om)}
   \Vert \partial_{t}a^{\alpha\beta}\Vert_{L^4(\Om)}
   \Vert \partial_{t}^2\partial_{\alpha}v_{\beta}\Vert_{L^2(\Om)}
   \le
   \ccP
   .
  \nonumber
  \end{equation}
After estimating all the terms in this manner, we obtain
  \begin{equation}
   \cJ_1
   \le
   \ccP.
   \nonumber
  \end{equation}

Next, we treat the term
  \begin{align}
   \begin{split}
   \cJ_3
   &=
   -
   3
   \int_{\Omega}
   R(0)
   \frac{
    \bar q''(R)
       }{
    R
   }
   \partial_{t}^{4}R
   \partial_{t}^{2}R
   \partial_{t}R
   \\&
   =
   \frac{d}{dt}
   \left(
    -3
   \int_{\Omega}
   R(0)
   \frac{
    \bar q''(R)
       }{
    R
   }
   \partial_{t}^{3}R
   \partial_{t}^{2}R
   \partial_{t}R
   \right)
   \\
   &
   + 3
   \int_{\Omega}
   R(0)
   \partial_{t}^{3}R
   \partial_{t}
   \left(
   \frac{
    \bar q''(R)
       }{
    R
   }
   \partial_{t}^{2}R
   \partial_{t}R
   \right)
   \\&
   =
   \frac{d}{dt}
   \cJ_{31}
   +
   \cJ_{32}
   .
   \end{split}
   \label{EQ27}
  \end{align}
For the first term in (\ref{EQ27}), we have
  \begin{align}
   \begin{split}
    \cJ_{31}(t)
    &\les
    \Vert R(0)\Vert_{L^\infty(\Om)}
    \Vert {R}^{-1}\Vert_{L^\infty(\Om)}
    \Vert \partial_{t}^{3} R\Vert_{L^2(\Om)}
    \Vert \partial_{t}^{2} R\Vert_{L^2(\Om)}
    \Vert \partial_{t} R\Vert_{L^\infty(\Om)}
   \\&
   \les
    \Vert R(0)\Vert_{L^\infty(\Om)}
    \Vert {R}^{-1}\Vert_{L^\infty(\Om)}
    \Vert \partial_{t}^{3} R\Vert_{L^2(\Om)}
    \Vert \partial_{t}^{2} R(0)\Vert_{L^2(\Om)}
    \Vert \partial_{t} R\Vert_{L^\infty(\Om)}
   \\&\indeq
   +
    \Vert R(0)\Vert_{L^\infty(\Om)}
    \Vert {R}^{-1}\Vert_{L^\infty(\Om)}
    \Vert \partial_{t}^{3} R\Vert_{L^2(\Om)}
    \Vert \partial_{t} R\Vert_{L^\infty(\Om)}
    \int_{0}^{t}
    \Vert \partial_{t}^{3}R\Vert_{L^2(\Om)}
   .
   \end{split}
   \nonumber
  \end{align}
Using Lemma~\ref{L00}(x) as well as the Sobolev and
Young's inequalities,  we get
  \begin{align}
   \begin{split}
    \cJ_{31}(t)
    &\le
    \tilde\epsilon \Vert \partial_{t}^{3}R\Vert_{0}^2
    +
    \tilde\epsilon \Vert \partial_{t}R\Vert_{2}^2
    +
    \ccP_0
    +
    \ccP\int_{0}^{t}\ccP
   \end{split}
   \nonumber
  \end{align}
where we also used
  \begin{equation*}
   \Vert \partial_{t} R\Vert_{1}^2
   \le
    \left\Vert
     \partial_{t}R(0)
     + \int_{0}^{t} \partial_{t} R
    \right\Vert_{1}^2
   \les
    \Vert
     \partial_{t}R(0)
    \Vert_{1}^2
    +
    \left\Vert
    \int_{0}^{t} \partial_{t} R
    \right\Vert_{1}^2
   \le
   \ccP_0
   +
   \int_{0}^{t}\ccP
  \end{equation*}
and Jensen's inequality.
Also,
  \begin{align}
   \begin{split}
    \cJ_{31}(0)
    \les
    C
    \Vert \partial_{t}^{3} R(0)\Vert_{L^2(\Om)}
    \Vert \partial_{t}^{2} R(0)\Vert_{L^2(\Om)}
    \Vert \partial_{t}^{2} R(0)\Vert_{L^\infty(\Om)}
    \le
    \ccP_0.
   \end{split}
   \nonumber
  \end{align}
The second term in \eqref{EQ27}, $\cJ_{32}$, is simpler, as we
just apply H\"older's inequality and
write
  \begin{align}
   \begin{split}
   \cJ_{32}
   &\les
   \Vert R(0)\Vert_{L^\infty(\Om)}
   \Vert \partial_{t}^{3}R\Vert_{L^2(\Om)}
   \biggl(
     \left\Vert
      \frac{\bar q'''(R)}{R}
      \partial_{t} R
     \right\Vert_{L^{\infty}(\Om)}
     \Vert \partial_{t}^2 R\Vert_{L^2(\Om)}
     \Vert \partial_{t} R\Vert_{L^\infty(\Om)}
     \\&\quad\quad\quad\quad\quad\quad\quad\quad\quad\quad\quad\quad\quad\quad
     +
     \left\Vert
      \frac{\bar q''(R)}{R^2}
      \partial_{t} R
     \right\Vert_{L^{\infty}(\Om)}
     \Vert \partial_{t}^2 R\Vert_{L^2(\Om)}
     \Vert \partial_{t} R\Vert_{L^\infty(\Om)}
     \\&\quad\quad\quad\quad\quad\quad\quad\quad\quad\quad\quad\quad\quad\quad
     +
     \left\Vert
      \frac{\bar q''(R)}{R}
     \right\Vert_{L^{\infty}(\Om)}
     \Vert \partial_{t}^3 R\Vert_{L^2(\Om)}
     \Vert \partial_{t} R\Vert_{L^\infty(\Om)}
     \\&\quad\quad\quad\quad\quad\quad\quad\quad\quad\quad\quad\quad\quad\quad
     +
     \left\Vert
      \frac{\bar q''(R)}{R}
     \right\Vert_{L^{\infty}(\Om)}
     \Vert \partial_{t}^2 R\Vert_{L^4(\Om)}^2
   \biggr)
   \\&
   \le
   \ccP.
   \end{split}
   \nonumber
  \end{align}
The term $\cJ_4$ is treated similarly to $\cJ_3$ by
differentiating by parts in time. Namely, we have
  \begin{align}
   \begin{split}
   \cJ_4
   &=
   -
   \int_{\Omega}
   R(0)
   \frac{
    \bar q'''(R)
       }{
    R
   }
   \partial_{t}^{4}R
   (\partial_{t}R)^{3}
   \\&
   =
   \frac{d}{dt}
   \left(
   -
   \int_{\Omega}
   R(0)
   \frac{
    \bar q'''(R)
       }{
    R
   }
   \partial_{t}^{3}R
   (\partial_{t}R)^{3}
   \right)
   +
   \int_{\Omega}
   R(0)
   \partial_{t}^{3}R
   \partial_{t}
   \left(
   \frac{
    \bar q'''(R)
       }{
    R
   }
   (\partial_{t}R)^{3}
   \right)
   \\&
   =
   \frac{d}{dt}
   \cJ_{41}
   +
   \cJ_{42}
   .
   \end{split}
   \label{EQ30}
  \end{align}
The pointwise terms are estimated using H\"older and Sobolev
inequalities as
  \begin{align}
   \begin{split}
    \cJ_{41}(t)
    &\les
    \Vert R(0)\Vert_{L^\infty(\Om)}
    \Vert {R}^{-1}\Vert_{L^\infty(\Om)}
    \Vert \partial_{t}^{3} R\Vert_{L^2(\Om)}
    \Vert \partial_{t} R\Vert_{L^6(\Om)}^3
    \\ &\les
    \Vert \partial_{t}^{3} R\Vert_{0}
    \Vert \partial_{t} R\Vert_{1}^{3}
    \\ &
    \les
    \tilde\epsilon \Vert \partial_{t}^{3}R\Vert_{0}^2
    + \ccP_0
    + \int_{0}^{t} \ccP
   \end{split}
   \nonumber
  \end{align}
and
  \begin{align}
   \begin{split}
    \cJ_{41}(0)
    &\les
    \Vert \partial_{t}^{3} R(0)\Vert_{0}
    \Vert \partial_{t} R(0)\Vert_{1}^{3}
   \le \ccP_0
   .
   \end{split}
   \nonumber
  \end{align}
For the second term $\cJ_{42}$
in \eqref{EQ30}, we use H\"older's inequality, yielding
  \begin{align}
   \begin{split}
   \cJ_{42}
   &\les
   \Vert R(0)\Vert_{L^\infty(\Om)}
   \Vert \partial_{t}^{3}R\Vert_{L^2(\Om)}
   \biggl(
     \left\Vert
      \frac{\bar q''''(R)}{R}
     \right\Vert_{L^{\infty}(\Om)}
     \Vert \partial_{t} R\Vert_{L^6(\Om)}^{3}
     \\&\quad\quad\quad\quad\quad\quad\quad\quad\quad\quad\quad\quad\quad\quad
     +
     \left\Vert
      \frac{\bar q'''(R)}{R^2}
     \right\Vert_{L^{\infty}(\Om)}
     \Vert \partial_{t} R\Vert_{L^8(\Om)}^{4}
     \\&\quad\quad\quad\quad\quad\quad\quad\quad\quad\quad\quad\quad\quad\quad
     +
     \left\Vert
      \frac{\bar q'''(R)}{R}
     \right\Vert_{L^{\infty}(\Om)}
     \Vert \partial_{t} R\Vert_{L^\infty(\Om)}^{2}
     \Vert \partial_{t}^2 R\Vert_{L^2(\Om)}
   \biggr)
   \\&
   \leq
   \ccP
   .
  \end{split}
  \nonumber
  \end{align}

Finally, the last term $\cJ_5$
can be bounded using H\"older's inequality
  \begin{align}
   \begin{split}
   \cJ_5
    &=
  \frac12
  \int_\Om
   R(0)
   \partial_{t}
   \left(
    \frac{\bar q'(R)}{R}
   \right)
   (\partial_{t}^{3}R)^2
  \\&
   \les
   \Vert R(0)\Vert_{L^\infty(\Om)}
   \left\Vert
     \partial_{t}
     \left(
      \frac{\bar q'(R)}{R}
     \right)
   \right\Vert_{L^\infty(\Om)}
   \Vert\partial_{t}^{3} R\Vert_{L^4(\Om)}^2
   \le
   \ccP.
   \end{split}
   \nonumber
  \end{align}

\begin{remark} (Recurrent estimates of lower order terms)
Ideas similar to the above, relying on a combination
of Sobolev embedding, Young and Jensen's inequalities, and interpolation,
shall be used throughout the paper to estimate lower order terms, many times
without explicit mention. Before proceeding further,
we illustrate in detail how a typical lower order is bounded.

Consider $ \norm{\partial_t^2 v}_{0.5+\de} \norm{\partial^3_t v}_0$,
where $\de > 0$ is small.
Interpolating
\begin{gather}
\norm{\partial^2_t v}_{0.5+\de} \les \norm{\partial^2_t v}_1^{0.5+\de}
\norm{ \partial_t^2 v}_0^{0.5-\de},
\nonumber
\end{gather}
and using the Cauchy inequality with $\ep$, we find
\begin{gather}
 \norm{\partial_t^2 v}_{0.5+\de} \norm{\partial^3_t v}_0
 \les C(\tilde{\ep}) \norm{\partial^2_t v}_0^{1-2\de} \norm{\partial^2_t v}_1^{1+2\de}
 + \tilde{\ep} \norm{\partial^3_t v}^2_0.
 \nonumber
\end{gather}
Next, choosing $p = 2/(1+2\de)$ and $q=2/(1-2\de)$,
we apply Young's inequality with $\ep$ to get
\begin{align}
\begin{split}
 \norm{\partial_t^2 v}_{0.5+\de} \norm{\partial^3_t v}_0
& \les C(\tilde{\ep})(  C(\ep^\prime)  \norm{\partial^2_t v}_0^2 + \ep^\prime \norm{\partial^2_t v}_1^2 )
 + \tilde{\ep} \norm{\partial^3_t v}^2_0
 \\
 & \les C(\tilde{\ep},\ep^\prime)  \norm{\partial^2_t v}_0^2 + \tilde{\ep} \norm{\partial^2_t v}_1^2
 + \tilde{\ep} \norm{\partial^3_t v}^2_0,
 \end{split}
 \nonumber
\end{align}
where in the second step we chose $\ep^\prime$ so small that
$C(\tilde{\ep}) \ep^\prime \leq \tilde{\ep}$.
The fundamental theorem of calculus and Jensen's inequality provide
\begin{align}
\begin{split}
\norm{\partial^2_t v}_0^2 & \les
\norm{\partial^2_t v(0)}_0^2 + \left( \int_0^t \norm{\partial^3_t v}_0\right)^2
\les \norm{\partial^2_t v(0)}_0^2 + t\int_0^t \norm{\partial^3_t v}^2_0.
\end{split}
\nonumber
\end{align}
We conclude that for $t$ less than a certain fixed $T$, we have
\begin{gather}
 \norm{\partial_t^2 v}_{0.5+\de} \norm{\partial^3_t v}_0
 \les \ccP_0 + \tilde{\ep} \ccN + \int_0^t \ccP.
 \nonumber
\end{gather}

\end{remark}

\subsubsection{Estimate of $\cJ_2$\label{section_J_2}}
There is a part of the integral
  \begin{align}
   \begin{split}
   \cJ_2
   &=
   \int_\Om
   R(0)
   \left(
   \partial_{t}^3
    \left(a^{\alpha\beta}\frac{q}{R}\right)
    -
    a^{\alpha\beta}
    \partial_{t}^{3}\left(\frac{q}{R}\right)
   \right)
   \partial_{t}^3\partial_{\alpha}v_{\beta}
   ,
  \end{split}
   \label{EQ07}
  \end{align}
which can not be estimated using
integration by parts and H\"older estimates
and involves a special cancellation, namely the ``tricky'' term
  \begin{gather}
   T = \int_0^t \int_\Om \partial^3_t A^{\mu\al} \partial^3_t \partial_\mu v_\al q,
   \label{tricky_integral}
  \end{gather}
where, recall, $A=J a$.
It obeys the following estimate.

\begin{lemma}
\label{L01}
The term $T$ given by (\ref{tricky_integral}) satisfies the estimate
\begin{align}
    T
     &\le \tilde{\ep} \norm{ \Pi \overline{\partial} \partial^2_t v}_{0,\Ga_1}^2
     + \tilde\epsilon \ccN
     +   \ccP_0
     +  \ccP \int_0^t \ccP
   .
   \nonumber
\end{align}
\end{lemma}

\begin{proof}[Proof of Lemma~\ref{L01}]
From (\ref{a_explicit}), we may write
  \begin{align*}
   A^{1\al} &= \varepsilon^{\al \la \tau} \partial_2 \eta_\la \partial_3 \eta_\tau,
     \\
   A^{2\al} &= -\varepsilon^{\al \la \tau} \partial_1 \eta_\la \partial_3 \eta_\tau,
     \\
   A^{3\al} &= \varepsilon^{\al \la \tau} \partial_1 \eta_\la \partial_2 \eta_\tau.
  \end{align*}
%
%
%
Expanding the index $\mu$ in (\ref{tricky_integral}),
we have
\begin{align}
\begin{split}
T  &=
\int_0^t \int_\Om q \ep^{\al\la \tau} \partial_2 \partial^2_t v_\la \partial_3 \eta_\tau
\partial_1 \partial^3_t v_\al
+
\int_0^t \int_\Om q\ep^{\al\la \tau}\partial_2 \eta_\la  \partial_3 \partial^2_t v_\tau
\partial_1 \partial^3_t v_\al
\\&\indeq
-
\int_0^t \int_\Om q \ep^{\al\la \tau} \partial_1 \partial^2_t v_\la \partial_3 \eta_\tau
\partial_2 \partial^3_t v_\al
-
\int_0^t \int_\Om q \ep^{\al\la \tau}\partial_1 \eta_\la  \partial_3 \partial^2_t v_\tau
\partial_2 \partial^3_t v_\al
\\&\qquad
+
\int_0^t  \int_\Om q \ep^{\al\la \tau} \partial_1 \partial^2_t v_\la \partial_2 \eta_\tau
\partial_3 \partial^3_t v_\al
+
\int_0^t \int_\Om q \ep^{\al\la \tau}\partial_1 \eta_\la  \partial_2 \partial^2_t v_\tau
\partial_3 \partial^3_t v_\al
\\
&
\indeq+ L_1
\\&=
T_1 + \cdots + T_6 + L_1
\end{split}
   \label{EQ02}
\end{align}
where $L_1$ denotes lower order terms, which are all of the form
  \begin{align*}
   &
   \int_{0}^{t}
    \int_{\Omega}
     q \partial \partial_{t} v \partial v \partial \partial_{t}^{3} v
      =
      \int_{\Omega}
             q \partial \partial_{t} v \partial v \partial \partial_{t}^{2} v   |_{0}^{t}
      -
      \int_{0}^{t}
       \int_{\Omega}
             \partial_{t}  q \partial \partial_{t} v \partial v \partial \partial_{t}^{2} v
      \\
      &
      -
      \int_{0}^{t}
       \int_{\Omega}
              q \partial \partial_{t}^2 v \partial v \partial \partial_{t}^{2} v
   \\&\indeq
   \le
    \Vert q\Vert_{L^{\infty}}
    \Vert\nabla v\Vert_{L^\infty}
    \Vert \nabla \partial_t v\Vert_{0}
    \Vert \nabla \partial_t^2 v\Vert_{0}
    +
    \ccP_0
    + \int_{0}^{t}\ccP
   \\&\indeq
   \les
    \Vert v\Vert_{2}^{1/2}
    \Vert v\Vert_{3}^{1/2}
    \Vert\partial_{t} v\Vert_{1}
    \Vert\partial_{t}^2 v\Vert_{1}
    +
    \ccP_0 + \int_{0}^{t}\ccP
   \\&\indeq
   \les
    \tilde\epsilon
    \Vert v\Vert_{3}^2
    +
    \tilde\epsilon
    \Vert \partial_{t}^2 v\Vert_{1}^2
    +
    \ccP_0
    + \int_{0}^{t}\ccP
   .
  \end{align*}
We group the leading terms
in \eqref{EQ02} as $T_1 + T_3$, $T_4+T_6$, and $T_2 + T_5$.
Integrating by parts in time in $T_3$, we find
\begin{align}
\begin{split}
T_1 + T_3  &=
\int_0^t \int_\Om  q\ep^{\al\la \tau} \partial_2 \partial^2_t v_\la \partial_3 \eta_\tau
\partial_1 \partial^3_t v_\al
+
\int_0^t \int_\Om q \ep^{\al\la \tau} \partial_1 \partial^3_t v_\la \partial_3 \eta_\tau
\partial_2 \partial^2_t v_\al
\\&\indeq
-
\int_\Om q \ep^{\al\la \tau} \partial_1 \partial^2_t v_\la \partial_3 \eta_\tau
\partial_2 \partial^2_t v_\al
+ L_2
\\&
=
\int_0^t \int_\Om q \ep^{\al\la \tau} \partial_2 \partial^2_t v_\la \partial_3 \eta_\tau
\partial_1 \partial^3_t v_\al
+
\int_0^t \int_\Om q \ep^{\la \al  \tau} \partial_1 \partial^3_t v_\al \partial_3 \eta_\tau
\partial_2 \partial^2_t v_\la
\\&\indeq
-
\int_\Om q \ep^{\al\la \tau}  \partial_1 \partial^2_t v_\la \partial_3 \eta_\tau
\partial_2 \partial^2_t v_\al
+ L_2
\end{split}
   \label{EQ05}
\end{align}
\begin{align*}
&
=
\int_0^t \int_\Om q \ep^{\al\la \tau} \partial_2 \partial^2_t v_\la \partial_3 \eta_\tau
\partial_1 \partial^3_t v_\al
-
\int_0^t \int_\Om q  \ep^{\al \la  \tau} \partial_1 \partial^3_t v_\al \partial_3 \eta_\tau
\partial_2 \partial^2_t v_\la
\\&\indeq
-
\int_\Om q \ep^{\al\la \tau} \partial_1 \partial^2_t v_\la \partial_3 \eta_\tau
\partial_2 \partial^2_t v_\al  + L_2
\\
&
=   -\int_\Om q \ep^{\al\la \tau} \partial_1 \partial^2_t v_\la \partial_3 \eta_\tau
\partial_2 \partial^2_t v_\al
+ L_2,
\end{align*}
where from the first to the second line we relabeled the indices $\al \leftrightarrow \la$
in the second integral,
from the second to the third we used that $\ep^{\la \al \tau} = - \ep^{\al\la \tau}$,
and from the third to the fourth we observed that the first two integrals cancel
each other.
The symbol $L_2$ denotes
the lower order terms, which are treated below.
We now analyze the term
\begin{gather}
T_{13} = -\int_\Om q \ep^{\al\la \tau} \partial_1 \partial^2_t v_\la \partial_3 \eta_\tau
\partial_2 \partial^2_t v_\al.
\nonumber
\end{gather}
We have
\begin{gather}
T_{13} = -\int_\Om q \ep^{\al\la 3} \partial_1 \partial^2_t v_\la \partial_3 \eta_3
\partial_2 \partial^2_t v_\al
 -\int_\Om q \ep^{\al\la i} \partial_1 \partial^2_t v_\la \partial_3 \eta_i
\partial_2 \partial^2_t v_\al,
   \label{EQ13}
\end{gather}
where the last integral may be bounded by
  \begin{gather}
   \tilde{\ep} \norm{\partial^2_t v}^2_1
   \nonumber
  \end{gather}
because $\eta(0) = \id$, so that
$\partial_3 \eta_i = O(\tilde{\ep})$
for small time;
we also used $q\le C$ by Lemma~\ref{L00}(x).
For the first integral in \eqref{EQ13},
again by the initial condition, we have that
$\partial_3 \eta_3 = 1 + O(\tilde{\ep})$ and thus
\begin{align*}
& -\int_\Om q \ep^{\al\la 3} \partial_1 \partial^2_t v_\la \partial_3 \eta_3
\partial_2 \partial^2_t v_\al\\&
=
 -\int_\Om q \ep^{\al\la 3} \partial_1 \partial^2_t v_\la
\partial_2 \partial^2_t v_\al
 -\int_\Om q \ep^{\al\la 3} \partial_1 \partial^2_t v_\la O(\tilde{\epsilon})
                \partial_2 \partial^2_t v_\al
\end{align*}
where the last integral is also bounded by
$\tilde{\ep} \norm{\partial^2_t v}^2_1 $.
For the remaining integral, we expand $\ep^{\al \la 3}$:
\begin{align}
\begin{split}
 -\int_\Om q \ep^{\al\la 3} \partial_1 \partial^2_t v_\la
\partial_2 \partial^2_t v_\al
= & \,
- \int_\Om ( q \ep^{123} \partial_1 \partial^2_t v_2 \partial_2 \partial^2_t v_1
+  q\ep^{213} \partial_1 \partial^2_t v_1 \partial_2 \partial^2_t v_2 )
\\
= & \,
- \int_\Om ( q\partial_1 \partial^2_t v_2 \partial_2 \partial^2_t v_1
- q\partial_1 \partial^2_t v_1 \partial_2 \partial^2_t v_2 ) ,
\end{split}
\nonumber
\end{align}
after using $\ep^{123} =1 = - \ep^{213}$.  We integrate by parts the $\partial_2$ in the
first term and the $\partial_1$ in the second term to find
\begin{align}
\begin{split}
 -\int_\Om q \ep^{\al\la 3} \partial_1 \partial^2_t v_\la
\partial_2 \partial^2_t v_\al
= & \,
 \int_\Om (q \partial_2 \partial_1 \partial^2_t v_2  \partial^2_t v_1
-  q\partial^2_t v_1 \partial_1\partial_2 \partial^2_t v_2 )
\\
 &  \, +\int_\Om (\partial_1 \partial^2_t v_2 \partial^2_t v_1 \partial_2 q
- \partial^2_t v_1 \partial_2 \partial^2_t v_1 \partial_1 q )
\\
= & \,
0 +  \int_\Om ( \partial_1 \partial^2_t v_2 \partial^2_t v_1 \partial_2 q
- \partial^2_t v_1 \partial_2 \partial^2_t v_1 \partial_1 q ),
\end{split}
\nonumber
\end{align}
where the last integral obeys
\begin{align*}
  & \int_\Om ( \partial_1 \partial^2_t v_2 \partial^2_t v_1 \partial_2 q
- \partial^2_t v_1 \partial_2 \partial^2_t v_1 \partial_1 q )
\leq
C \norm{\partial^2_t v}_1 \norm{\partial^2_t v}_0 \norm{\nabla q}_{L^\infty(\Om)}
  \nonumber
  \end{align*}
  \begin{align}
\begin{split}&\indeq
 \leq
   \tilde{\ep} \norm{\partial^2_t v}^2_1
   + C\norm{\partial^2_t v}^2_0 \norm{\nabla q}^2_{L^\infty(\Om)}
 \leq
   \tilde{\ep} \norm{\partial^2_t v}^2_1
   + C
      \norm{\partial^2_t v}^2_0
      \Vert R\Vert_{2}^{1/2}
      \Vert R\Vert_{3}^{1/2}
  \nonumber\\&\indeq
 \leq
   \tilde{\ep} \norm{\partial^2_t v}^2_1
   +  \tilde{\ep} \norm{R}^2_3
   + \ccP_0
   +  \int_{0}^{t} \ccP.
  \end{split}
   \nonumber
  \end{align}
The symbol $L_2$ in \eqref{EQ05}, denotes the sum
of
  \begin{equation*}
   \int_\Om q\ep^{\al\la \tau} \partial_1 \partial^2_t v_\la \partial_3 \eta_\tau
      \partial_2 \partial^2_t v_\al |_{t=0}
    \le
    \ccP_0
  \end{equation*}
and
  \begin{align*}
    \int_0^t \int_\Om
       \ep^{\al\la \tau}
         \partial_{t}
          \Bigl(
             q\partial_1 \partial^2_t v_\la \partial_3 \eta_\tau
          \Bigr)
        \partial_2 \partial^2_t v_\al
    \le
    \int_{0}^{t}
     \ccP
    .
  \end{align*}
For the sum of $T_1$ and $T_3$, we conclude
  \begin{align*}
    T_1 + T_3
      &\leq
     \tilde{\epsilon}
     \Vert \partial_{t}^2v\Vert_{1}^2
     +
     \tilde{\epsilon}
     \Vert R\Vert_{3}^2
     +
      \ccP_0 + \int_0^t \ccP.
  \end{align*}

The terms $T_4+T_6$ and $T_2 + T_5$ are handled in the same way, with one extra
step. In the last step above, we integrated $\partial_1$ and $\partial_2$ by parts. For
$T_4 + T_6$ we integrate by parts the derivatives $\partial_2$ and $\partial_3$;
this last one produces the boundary term
\begin{gather}
\int_{\Ga_1} q \partial^2_t v_2 \partial_2 \partial^2_t v_3.
\nonumber
\end{gather}
(Note that the same integral over $\Gamma_0$ vanishes by \eqref{Lagrangian_bry_v}.)
To bound this term, we recall (\ref{projection_identity}), which
allows us to relate $\Pi \overline{\partial} \partial^2_t v$
and $\overline{\partial} \partial^2_t v_3$ and write
\begin{align}
\begin{split}
&\left|\int_{\Ga_1} q \partial^2_t v_2 \partial_2 \partial^2_t v_3\right|
 =
\left|\int_{\Ga_1} q \partial^2_t v_2 ( \Pi^3_\la \partial_2 \partial^2_t v^\la
+ g^{kl} \partial_k \eta_3 \partial_l \eta_\la \partial_2 \partial_t^2 v^\la )
\right|
\\
&
\les
 \tilde{\ep} \norm{ \Pi \overline{\partial} \partial^2_t v }_{0,\Ga_1}^2
+\norm{q}_{1.5,\Ga_1}^2 \norm{\partial^2_t v}_{0,\Ga_1}^2
\\
&
+ \norm{q \partial^2_t v_2 g^{kl} \partial_k \eta_3 \partial_l \eta_\la }_{0.5,\Ga_1}
\norm{\partial_2 \partial_t^2 v^\la}_{-0.5,\Ga_1}
\\
&
\les  \tilde{\ep} \norm{ \Pi \overline{\partial} \partial^2_t v}_{0,\Ga_1}^2
+ \norm{q}_{1.5,\Ga_1}^2 \norm{\partial^2_t v}_{0,\Ga_1}^2
\\
&
\indeq+ \norm{q}_{1.5,\Ga_1} \norm{g^{-1} }_{1.5,\Ga_1}\norm{ \overline{\partial} \eta }_{1.5,\Ga_1}
\norm{\overline{\partial} \eta_3}_{1.5,\Ga_1} \norm{\partial_t^2 v}_{0.5,\Ga_1}^2.
\end{split}
   \label{EQ61}
\end{align}
Using that $\overline{\partial} \eta_3 = 0$ at $t=0$, we may write
$\overline{\partial} \eta_3 = \int_0^t \overline{\partial} v_3$ to estimate
\begin{gather}
\int_{\Ga_1} q \partial^2_t v_2 \partial_2 \partial^2_t v_3
\les \tilde{\ep} \norm{ \Pi \overline{\partial} \partial^2_t v}_{0,\Ga_1}^2
+ \tilde{\ep} \ccN + \ccP\int_0^t \ccP,
\nonumber
\end{gather}
and the proof is concluded.
\end{proof}

Now, we complete the treatment of
$\cJ_2$ by estimating the rest of the terms appearing in
\eqref{EQ07}, i.e., by bounding the expression
  \begin{align*}
   \cJ_2 - T
   &=
   \int_\Om
   R(0)
   \left(
   \partial_{t}^3
    \left(a^{\alpha\beta}\frac{q}{R}\right)
    -
    a^{\alpha\beta}
    \partial_{t}^{3}\left(\frac{q}{R}\right)
   \right)
   \partial_{t}^3\partial_{\alpha}v_{\beta}
   \nonumber\\&\indeq
   -
    \int_\Om R(0)\partial^3_t
     \left(
      \frac{a^{\alpha\beta}}{R}
     \right)
      \partial^3_t \partial_\alpha v_\beta
      q
  \end{align*}
which we may rewrite as
  \begin{align}
     \cJ_2 - T
   &=
   \int_\Om
   R(0)
   \left(
   \partial_{t}^3
    \left(a^{\alpha\beta}\bar q\right)
    -
    a^{\alpha\beta}
    \partial_{t}^{3}\bar q
    -
    \partial_{t}^{3} a^{\alpha\beta} \bar q
   \right)
   \partial_{t}^3\partial_{\alpha}v_{\beta}
   \nonumber\\&\indeq
   -
    \int_\Om R(0)
     \left(
      \partial_{t}^{3}
       \left(
          a^{\alpha\beta}
          R^{-1}
       \right)
       -
       \partial_{t}^{3} a^{\alpha \beta}
       R^{-1}
     \right)
      \partial^3_t \partial_\alpha v_\beta
      q
    .
   \label{EQ12}
  \end{align}
After time integration, the first integral in
\eqref{EQ12} equals
  \begin{align*}
   &
   3
   \int_{0}^{t}
   \int_\Om
   R(0)
   \left(
    \partial_{t}^2 a^{\alpha\beta}
          \partial_{t}\bar q
    +
    \partial_{t} a^{\alpha\beta}
          \partial_{t}^2\bar q
   \right)
   \partial_{t}^3\partial_{\alpha}v_{\beta}
   \nonumber\\&\indeq
   =
   3
   \int_\Om
   R(0)
   \left(
    \partial_{t}^2 a^{\alpha\beta}
          \partial_{t}\bar q
    +
    \partial_{t} a^{\alpha\beta}
          \partial_{t}^2\bar q
   \right)
   \partial_{t}^2\partial_{\alpha}v_{\beta}
   |_{0}^{t}
   \nonumber\\&\indeq\indeq
   -
   3
   \int_{0}^{t}
   \int_\Om
   R(0)
   \partial_{t}
   \left(
    \partial_{t}^2 a^{\alpha\beta}
          \partial_{t}\bar q
    +
    \partial_{t} a^{\alpha\beta}
          \partial_{t}^2\bar q
   \right)
   \partial_{t}^2\partial_{\alpha}v_{\beta}
   .
  \end{align*}
The second term is bounded by $\int_{0}^{t}\ccP$, while the pointwise
term at $t=0$ by $\ccP_0$.
It is easy to check that the pointwise term at $t$ is bounded by
  \begin{align}
   \begin{split}
  &
  \Vert \partial_{t}^2v\Vert_{1}
   \bigl(
    \Vert \partial_{t}v\Vert_{1}^{1/2}
    \Vert \partial_{t}v\Vert_{2}^{1/2}
    + \Vert v\Vert_{2}^2
   \bigr)
   \Vert \partial_{t}R\Vert_{1}
   \\&\indeq\indeq
   +
   \Vert \partial_{t}^2v\Vert_{1}
    \Vert v\Vert_{1}^{1/2}
    \Vert v\Vert_{2}^{1/2}
    \bigl(
     \Vert \partial_{t}R\Vert_{1}^2
     + \Vert \partial_{t}^2R\Vert_{0}^{1/2}
       \Vert \partial_{t}^2R\Vert_{1}^{1/2}
    \bigr)
   \\&\indeq
   \les
    \tilde\epsilon
     \Vert \partial_{t}^2v\Vert_{1}^2
    +
    \tilde\epsilon
     \Vert \partial_{t}^2 R\Vert_{1}^2
    +
    \ccP_0
    +\int_{0}^{t}\ccP
   .
   \end{split}
   \label{EQ14}
  \end{align}
The second integral in \eqref{EQ12}
is treated the same way, resulting in the bound as in \eqref{EQ14}
but with an additional term
  \begin{equation*}
   \tilde\epsilon
     \Vert \partial_{t}^{3}R\Vert_{0}^2
     .
  \end{equation*}

\subsubsection{Estimate of the boundary integral\label{section_I_1}}
We now estimate the boundary integral on the left-hand side of (\ref{EQ03}) or,
rather, its time integral, which in view of (\ref{Lagrangian_bry_q}) and (\ref{aT_identity})
can be written as
  \begin{gather}
   \int_0^t  \int_{\Gamma_1}^{}
   \partial_{t}^3(J a^{\alpha \beta} q)
   \partial_{t}^3v_{\beta}
   N_{\alpha} = -\si I_1,
  \label{I_1_without_minus_sign}
  \end{gather}
where
  \begin{align}
   \begin{split}
   I_1 & = \int_0^t \int_{\Ga_1} \partial^3_t(\sqrt{g} \Delta_g \eta^\al ) \partial^3_t v_\al
   .
   \end{split}
  \label{I_1_def}
  \end{align}
We shall repeatedly use the identity
  \begin{equation}
   \sqrt{g} \Delta_{g}\eta^{\alpha}
   =
   \sqrt{g} g^{ij} \Pi_{\mu}^{\alpha}\partial_{ij}^2\eta^{\mu}
   .
   \label{EQ16}
  \end{equation}
The identity \eqref{EQ16} follows
from \eqref{Laplacian_identity_standard}
and \eqref{Christoffel_identity} since
  \begin{align}
   \begin{split}
   \sqrt{g}
    g^{ij} \Delta_{g}\eta^{\alpha}
    &=
    \sqrt{g} g^{ij} \partial_{ij}^2 \eta^{\alpha}
    - \sqrt{g}g^{ij}g^{kl}\partial_{l}\eta_{\mu}
              \partial_{ij}^2\eta^{\mu}\partial_{k}\eta^{\alpha}
    \nonumber\\&
    = \sqrt{g}
     g^{ij} \partial_{ij}^2\eta^{\mu}
         (\delta_{\mu}^{\alpha}-g^{kl}\partial_{k}\eta^{\alpha}\partial_{l}\eta_{\mu})
   \end{split}
  \end{align}
and the term inside the parentheses equals
$\Pi^{\alpha}_{\mu}$ by \eqref{projection_identity}.
Using \eqref{EQ16} and applying the Leibniz rule, we may split
  \begin{align}
   \begin{split}
    I_1
    &=
    \int_{0}^{t}
    \int_{\Gamma_1}
    \partial_{t}^{3} (\sqrt{g}\Delta_{g}\eta^{\alpha})
    \partial_{t}^{3}  v_{\alpha}
    =
    \int_{0}^{t}
    \int_{\Gamma_1}
    \partial_{t}^{3}
      (
         \sqrt{g} g^{ij} \Pi_{\mu}^{\alpha}\partial_{ij}^2\eta^{\mu}
      )
    \partial_{t}^{3}  v_{\alpha}
    \nonumber\\&
    =
    \int_{0}^{t}
    \int_{\Gamma_1}
    \sqrt{g} g^{ij} \Pi_{\mu}^{\alpha}
    \partial_{ij}^2\partial_{t}^{2}v^{\mu}
    \partial_{t}^{3}  v_{\alpha}
    +
    3
    \int_{0}^{t}
    \int_{\Gamma_1}
    \partial_{t}(    \sqrt{g} g^{ij} \Pi_{\mu}^{\alpha})
    \partial_{ij}^2\partial_{t}v^{\mu}
    \partial_{t}^{3}  v_{\alpha}
    \nonumber\\&\indeq
    +
    3
    \int_{0}^{t}
    \int_{\Gamma_1}
    \partial_{t}^2(    \sqrt{g} g^{ij} \Pi_{\mu}^{\alpha})
    \partial_{ij}^2v^{\mu}
    \partial_{t}^{3}  v_{\alpha}
    +
    \int_{0}^{t}
    \int_{\Gamma_1}
    \partial_{t}^{3}(    \sqrt{g} g^{ij} \Pi_{\mu}^{\alpha})
    \partial_{ij}^2\eta^{\mu}
    \partial_{t}^{3}  v_{\alpha}
    \nonumber\\&
    =
    I_{11}
    + 3I_{12}
    + 3I_{13}
    + I_{14}
   .
   \end{split}
   \nonumber
  \end{align}

 \paragraph{Estimate of $I_{11}$}
In order to bound $I_{11}$, we integrate by parts in $\partial_{i}$ and then
in $t$ to obtain
  \begin{align}
   \begin{split}
   I_{11}
   &=
   - \int_{0}^{t} \int_{\Gamma_1}
    \sqrt{g} g^{ij} \Pi_{\mu}^{\alpha}
    \partial_{j}\partial_{t}^{2}v^{\mu}
    \partial_{i} \partial_{t}^{3}  v_{\alpha}
   - \int_{0}^{t} \int_{\Gamma_1}
    \partial_{i}(    \sqrt{g} g^{ij} \Pi_{\mu}^{\alpha})
    \partial_{j}\partial_{t}^{2}v^{\mu}
     \partial_{t}^{3}  v_{\alpha}
   \nonumber\\&
   =
   -\frac12
     \int_{\Gamma_1}
    \sqrt{g} g^{ij} \Pi_{\mu}^{\alpha}
    \partial_{j}\partial_{t}^{2}v^{\mu}
    \partial_{i} \partial_{t}^{2}  v_{\alpha}
   +\frac12
     \int_{0}^{t}
     \int_{\Gamma_1}
    \partial_{t}
    \bigl(
     \sqrt{g} g^{ij} \Pi_{\mu}^{\alpha}
    \bigr)
    \partial_{j}\partial_{t}^{2}v^{\mu}
     \partial_{i} \partial_{t}^{2}  v_{\alpha}
    \nonumber\\&\indeq
   - \int_{0}^{t} \int_{\Gamma_1}
    \partial_{i}(    \sqrt{g} g^{ij} \Pi_{\mu}^{\alpha})
    \partial_{j}\partial_{t}^{2}v^{\mu}
     \partial_{t}^{3}  v_{\alpha}
   +\frac12
     \int_{\Gamma_1}
    \sqrt{g} g^{ij} \Pi_{\mu}^{\alpha}
    \partial_{j}\partial_{t}^{2}v^{\mu}
    \partial_{i} \partial_{t}^{2}  v_{\alpha}|_{0}
   \nonumber\\&
   =
   I_{111}+I_{112}+I_{113}+I_{114}
   .
   \end{split}
  \nonumber
 \end{align}
The first term on the right produces a coercive term, as we may write
  \begin{align*}
   I_{111}
   &=
   -\frac12
     \int_{\Gamma_1}
    \sqrt{g} g^{ij}
    \Pi_{\mu}^{\beta}
    \partial_{j}\partial_{t}^{2}v^{\mu}
    \Pi_{\beta}^{\alpha}
    \partial_{i} \partial_{t}^{2}  v_{\alpha}
  \nonumber\\&
  =
   -\frac12
     \int_{\Gamma_1}
    \delta^{ij}
    \Pi_{\mu}^{\beta}
    \partial_{j}\partial_{t}^{2}v^{\mu}
    \Pi_{\beta}^{\alpha}
    \partial_{i} \partial_{t}^{2}  v_{\alpha}
   -\frac12
     \int_{\Gamma_1}
     ( \sqrt{g} g^{ij} -\delta^{ij})
    \Pi_{\mu}^{\beta}
    \partial_{j}\partial_{t}^{2}v^{\mu}
    \Pi_{\beta}^{\alpha}
    \partial_{i} \partial_{t}^{2}  v_{\alpha}
    \\
    &
   = I_{1111}
      + I_{1112}
   .
  \end{align*}
Since
  \begin{align}
   \begin{split}
   \norm{ \sqrt{g} g^{ij} - \de^{ij}}_{1.5,\Ga_1}
   \leq
   \norm{ \sqrt{g} g^{ij} - \de^{ij}}_{1.5,\Ga_1}  \leq Ct \norm{\partial_t \overline{\partial}\eta}_{1.5,\Ga_1}
   \leq C t \norm{v}_3
   \leq \tilde{\epsilon},
   \end{split}
  \nonumber
  \end{align}
the second term is absorbed in the first provided
$T\le 1/C M$ for a sufficiently large $C$. Thus
  \begin{align*}
   I_{111}
     &\le
   -
    \frac14
   \Vert \Pi \overline{\partial} \partial_{t}^2 v\Vert_{0,\Gamma_1},
  \end{align*}
 so that (recall (\ref{I_1_without_minus_sign}))
 \begin{gather}
 -\si I_{111} \geq
 \frac{\si}{4}    \Vert \Pi \overline{\partial} \partial_{t}^2 v\Vert_{0,\Gamma_1}.
 \nonumber
 \end{gather}
The term $I_{112}$
is rewritten as
  \begin{align*}
     I_{112}
      &=
    \frac12
     \int_{0}^{t}
     \int_{\Gamma_1}
    \partial_{t}
    \bigl(
     \sqrt{g} g^{ij}
    \bigr)
    \Pi_{\mu}^{\alpha}
    \partial_{j}\partial_{t}^{2}v^{\mu}
     \partial_{i} \partial_{t}^{2}  v_{\alpha}
   +
    \frac12
     \int_{0}^{t}
     \int_{\Gamma_1}
     \sqrt{g} g^{ij}
    \partial_{t}\Pi_{\mu}^{\alpha}
    \partial_{j}\partial_{t}^{2}v^{\mu}
     \partial_{i} \partial_{t}^{2}  v_{\alpha}
    \nonumber\\&
    =
    \frac12
     \int_{0}^{t}
     \int_{\Gamma_1}
    \partial_{t}
    \bigl(
     \sqrt{g} g^{ij}
    \bigr)
    \Pi_{\mu}^{\sigma} \partial_{j}\partial_{t}^{2}v^{\mu}
    \Pi_{\sigma}^{\alpha} \partial_{i} \partial_{t}^{2}  v_{\alpha}
    +
    \frac12
     \int_{0}^{t}
     \int_{\Gamma_1}
     \sqrt{g} g^{ij}
    \partial_{t}\Pi_{\mu}^{\alpha}
    \partial_{j}\partial_{t}^{2}v^{\mu}
     \partial_{i} \partial_{t}^{2}  v_{\alpha}
     \\&
    = I_{1121} + I_{1122},
  \end{align*}
where we used $\Pi_{\mu}^{\alpha}=\Pi_{\mu}^{\sigma}\Pi_{\sigma}^{\alpha}$.
We have
  \begin{align*}
   I_{1121}
    \les
    \int_{0}^{t} \Vert \partial_{t}( \sqrt{g} g^{ij} )\Vert_{L^\infty(\Gamma_1)}
                 \Vert \Pi \bar\partial\partial_{t}^2 v \Vert_{0,\Gamma_1}^2,
  \end{align*}
and since by \eqref{EQ36}
  \begin{equation}
   \Vert \partial_{t}( \sqrt{g} g^{ij} )\Vert_{L^\infty(\Gamma_1)}
    =
   \Vert Q(\bar\partial\eta)\bar\partial\partial_{t}\eta\Vert_{L^\infty(\Gamma_1)}
    =
   \Vert Q(\bar\partial\eta)\bar\partial v\Vert_{L^\infty(\Gamma_1)}
   \les
   \Vert Q(\bar\partial\eta)\Vert_2 \Vert v\Vert_{3},
   \nonumber
  \end{equation}
we have
  \begin{equation}
   I_{1121}
   \le
   \int_{0}^{t} \ccP
      \Vert\Pi \bar\partial\partial_{t}^2v\Vert_{0,\Gamma_1}^2
   .
   \label{EQ42}
  \end{equation}
The term $I_{1122}$ is more delicate. First,
by $\Pi_{\mu}^{\alpha}=\Pi_{\mu}^{\sigma}\Pi_{\sigma}^{\alpha}$,
we have
  \begin{align*}
   I_{1122}
   &=
    \frac12
     \int_{0}^{t}
     \int_{\Gamma_1}
     \sqrt{g} g^{ij}
    \partial_{t}\Pi_{\mu}^{\sigma}
    \partial_{j}\partial_{t}^{2}v^{\mu}
    \Pi_{\sigma}^{\alpha}
     \partial_{i} \partial_{t}^{2}  v_{\alpha}
     \\&
   +
    \frac12
     \int_{0}^{t}
     \int_{\Gamma_1}
     \sqrt{g} g^{ij}
    \Pi_{\mu}^{\sigma}
    \partial_{j}\partial_{t}^{2}v^{\mu}
    \partial_{t}\Pi_{\sigma}^{\alpha}
     \partial_{i} \partial_{t}^{2}  v_{\alpha}
   \nonumber\\&
   =
     \int_{0}^{t}
     \int_{\Gamma_1}
     \sqrt{g} g^{ij}
    \partial_{t}\Pi_{\mu}^{\sigma}
    \partial_{j}\partial_{t}^{2}v^{\mu}
    \Pi_{\sigma}^{\alpha}
     \partial_{i} \partial_{t}^{2}  v_{\alpha}
   .
  \end{align*}
Since
  \begin{equation}
   \Pi_{\mu}^{\alpha}=\hat n^{\alpha}\hat n_{\mu}
   ,
   \label{EQ56}
  \end{equation}
where $\hat{n} = n\circ \eta$
(cf.~(\ref{normal_identity}) and \eqref{normal_standard_formula}),
we have
  \begin{equation}
   \partial_{t}
     \Pi_{\mu}^{\alpha}=\partial_{t}\hat n^{\alpha}\hat n_{\mu}+\hat n^{\alpha}\partial_{t}\hat n_{\mu}
   .
   \label{EQ41}
  \end{equation}
Therefore, $I_{1122}$ may be rewritten as
  \begin{align*}
   I_{1122}
   &=
     \int_{0}^{t}
     \int_{\Gamma_1}
     \sqrt{g} g^{ij}
    \hat n^{\sigma}\partial_{t}\hat n_{\mu}
    \partial_{j}\partial_{t}^{2}v^{\mu}
    \Pi_{\sigma}^{\alpha}
     \partial_{i} \partial_{t}^{2}  v_{\alpha}
     \\
     &
     +
     \int_{0}^{t}
     \int_{\Gamma_1}
     \sqrt{g} g^{ij}
     \partial_{t}\hat n^{\sigma}    \hat n_{\mu}
    \partial_{j}\partial_{t}^{2}v^{\mu}
    \Pi_{\sigma}^{\alpha}
     \partial_{i} \partial_{t}^{2}  v_{\alpha}
    \nonumber\\&
    = I_{11221}
        + I_{11222}
   .
  \end{align*}
For the second term, we use
  \begin{equation*}
   \hat n_{\mu}
    \partial_{j}\partial_{t}^2 v^{\mu}
    =      \hat n_{\tau}\Pi_{\mu}^{\tau}  \partial_{j}\partial_{t}^2 v^{\mu}
  \end{equation*}
and thus
$I_{11222}$ is controlled by the right side of
\eqref{EQ42}.
For $I_{11221}$, we use (recall (\ref{partial_t_normal})),
 \begin{equation}
   \partial_{t} \hat n_{\mu}
   = - g^{kl}\partial_{k}v^{\tau}\hat n_{\tau}\partial_{l}\eta_{\mu}
   \label{EQ43}
  \end{equation}
which gives
  \begin{align}
   \begin{split}
   I_{11221}
   &=
     -
     \int_{0}^{t}
     \int_{\Gamma_1}
     \sqrt{g} g^{ij}
    \hat n^{\sigma}
    g^{kl}\partial_{k}v^{\tau}\hat n_{\tau}\partial_{l}\eta_{\mu}
    \partial_{j}\partial_{t}^{2}v^{\mu}
    \Pi_{\sigma}^{\alpha}
     \partial_{i} \partial_{t}^{2}  v_{\alpha}
   .
   \end{split}
   \label{EQ45}
  \end{align}
From the equation (\ref{Lagrangian_free_Euler_eq}) for the velocity,
we have
$\partial_t v^{\alpha}
= -(J/\rho_0)a^{\mu\alpha}\partial_{\mu}q$, and
by the definition
of $a$,
  \begin{equation}
   \partial_{t}v^{\mu}
   \partial_{l}\eta_{\mu}
   =
   -
   \frac{J}{\rho_0}\partial_{l}q,
   \label{EQ44}
  \end{equation}
from where,
by applying $\partial_{j}\partial_t$ to both sides,
  \begin{align}
   \begin{split}
   \partial_{j}\partial_{t}^2v^{\mu}\partial_{l}\eta_{\mu}
   &=
   - \frac{J}{\rho_0}\partial_{jl}^2\partial_{t}q
   -
   \left(
    \partial_{j}\partial_{t}\left(
                             \frac{J}{\rho_0}\partial_{l}q
                            \right)
     - \frac{J}{\rho_0} \partial_{jl}^2\partial_{t}q
   \right)
   \\
   &
   -
   \left(
     \partial_{j}\partial_{t}
       (\partial_{t} v^{\mu}\partial_{l}\eta_{\mu})
       -
     \partial_{j}\partial_{t}^2
       v^{\mu}\partial_{l}\eta_{\mu}
   \right)
   ,
   \end{split}
   \label{EQ39}
  \end{align}
which we replace in \eqref{EQ45}.
The commutators are easily controlled, so we only
need to consider the main term
  \begin{equation}
   I_{11221}
   \siml
     \int_{0}^{t}
     \int_{\Gamma_1}
     \sqrt{g} g^{ij}
    \hat n^{\sigma}
    g^{kl}\partial_{k}v^{\tau}\hat n_{\tau}
    \frac{J}{\rho_0}\partial_{jl}^2\partial_{t} q
    \Pi_{\sigma}^{\alpha}
     \partial_{i} \partial_{t}^{2}  v_{\alpha}
   \label{EQ46}
  \end{equation}
where we henceforth adopt:
\begin{notation}
We use $\siml$ to denote equality modulo lower order terms that can be controlled.
Thus, $\siml$ in (\ref{EQ46}) indicates the leading term of
$I_{11221}$.
\label{notation_lot}
\end{notation}
Now, we integrate by parts in $x_j$, leading to
  \begin{align}
   \begin{split}
   I_{11221}
    & \siml
    -
     \int_{0}^{t}
     \int_{\Gamma_1}
     \sqrt{g} g^{ij}
    \hat n^{\sigma}
    g^{kl}\partial_{k}v^{\tau}\hat n_{\tau}
    \frac{J}{\rho_0}
    \partial_{l}\partial_{t} q
    \Pi_{\sigma}^{\alpha}
     \partial_{ij}^2 \partial_{t}^{2}  v_{\alpha}
    \\&
    \siml
    -
     \int_{\Gamma_1}
     \sqrt{g} g^{ij}
    \hat n^{\sigma}
    g^{kl}\partial_{k}v^{\tau}\hat n_{\tau}
    \frac{J}{\rho_0}
    \partial_{l}\partial_{t} q
    \Pi_{\sigma}^{\alpha}
     \partial_{ij}^2 \partial_{t}  v_{\alpha}
    \bigm|_{0}^{t}
    \\&\indeq
    +
     \int_{0}^{t}
     \int_{\Gamma_1}
     \sqrt{g} g^{ij}
    \hat n^{\sigma}
    g^{kl}\partial_{k}v^{\tau}\hat n_{\tau}
    \frac{J}{\rho_0}
    \partial_{l}\partial_{t}^2 q
    \Pi_{\sigma}^{\alpha}
     \partial_{ij}^2 \partial_{t}  v_{\alpha}
   = I_{112211}+ I_{112212}
      .
    \end{split}
    \nonumber
  \end{align}
At this point,  we use the identity
  \begin{equation}
   g^{ij}
   \Pi^{\alpha}_{\mu}
   \partial_{ij}^2 \eta^{\mu}
   =
   -
   \frac{1}{\sigma}
   \frac{J}{\sqrt{g}}
   a^{\mu\alpha} N_{\mu}q,
   \nonumber
  \end{equation}
which follows from (\ref{Lagrangian_bry_q}),
\eqref{aT_identity}, and (\ref{EQ16}),
which after applying $\partial_{t}^{2}$ gives
  \begin{align*}
   &
   g^{ij}
    \Pi^{\alpha}_{\mu}\partial_{t}\partial_{ij}^2 v^{\mu}
    =
    - \partial_{t}^2
        \biggl(
            \frac{1}{\sigma}
            \frac{J}{\sqrt{g}}
            a^{\mu\alpha}N_{\mu}q
        \biggr)
    -
    \biggl(
     \partial_{t}^2(g^{ij}\Pi_{\alpha}^{\mu}\partial_{ij}^{2}\eta^{\mu})
     - g^{ij}\Pi_{\alpha}^{\mu}\partial_{t}^2\partial_{ij}^2 \eta^{\mu}
    \biggr)
   .
  \end{align*}
After replacing the first term in
$I_{112211}$ and $I_{112212}$, the resulting terms may be controlled using
$H^{1/2}(\Gamma_1)$-$H^{-1/2}(\Gamma_1)$
duality. We illustrate this on the term where both time derivatives
hit $q$, i.e.,
$-(1/\sigma)(J/\sqrt{g})a^{\mu\alpha}N_{\mu}\partial_{t}^2q$.
After replacing this in $I_{112212}$, we get
the term of the form
  \begin{equation*}
   \int_{0}^{t}\int_{\Ga_1}B^{jl}\partial_{l}\partial_{t}^2 q\partial_{t}^2q,
  \end{equation*}
which is estimated by
  \begin{align*}
   &
   \int_{0}^{t}
     \Vert\partial_{l}\partial_{t}^2q\Vert_{H^{-1/2}(\Gamma_1)}
     \Vert B^{jl}\partial_{t}^2q\Vert_{H^{1/2}(\Gamma_1)}
    \nonumber\\&\indeq
    \les
    \int_{0}^{t}
     \Vert \partial_{t}^2q\Vert_{H^{1/2}(\Gamma_1)}
     \Vert B\Vert_{H^{1/2+\delta}}
     \Vert \partial_{t}^2 q\Vert_{H^{1/2}(\Gamma_1)}
     \leq
     \int_{0}^{t}
     \ccP
     \Vert  \partial_{t}^2q\Vert_{1}
     \leq
     \int_{0}^{t}
     \ccP
  \end{align*}
where $\delta>0$ is a small parameter.

Before continuing, it is worthwhile to formalize the
\eqref{EQ43}, \eqref{EQ44}, and \eqref{EQ39} into
the identity
  \begin{align}
   \partial_{t} \hat n_{\mu}
   \partial_{j}\partial_{t}^2v^{\mu}
      &=  g^{kl}
       \partial_{k}v^{\tau}
         \hat n_{\tau}
         \frac{J}{\rho_0}\partial_{jl}^2\partial_{t}q
       + g^{kl}
       \partial_{k}v^{\tau}
         \hat n_{\tau}
   \left(
    \partial_{j}\partial_{t}\left(
                             \frac{J}{\rho_0}\partial_{l}q
                            \right)
     - \frac{J}{\rho_0} \partial_{jl}^2\partial_{t}q
   \right)
     \nonumber\\&\indeq
       + g^{kl}
       \partial_{k}v^{\tau}
         \hat n_{\tau}
   \left(
     \partial_{j}\partial_{t}
       (\partial_{t} v^{\mu}\partial_{l}\eta_{\eta})
       -
     \partial_{j}\partial_{t}^2
       v^{\mu}\partial_{l}\eta_{\eta}
   \right)
   .
   \label{EQ48}
  \end{align}
Also, similarly to \eqref{EQ43}, we have (recall (\ref{partial_i_normal}))
  \begin{equation}
   \partial_{i} \hat n_{\mu}
   = - g^{kl}\partial_{ik}\eta^{\tau}\hat n_{\tau}\partial_{l}\eta_{\mu},
   \nonumber
  \end{equation}
whence, as for \eqref{EQ48}, we have
  \begin{align}
   \partial_{i} \hat n_{\mu}
   \partial_{j}\partial_{t}^2v^{\mu}
      &=  g^{kl}
       \partial_{ik}^2\eta^{\tau}
         \hat n_{\tau}
         \frac{J}{\rho_0}\partial_{jl}^2\partial_{t}q
       + g^{kl}
       \partial_{ik}^2\eta^{\tau}
         \hat n_{\tau}
   \left(
    \partial_{j}\partial_{t}\left(
                             \frac{J}{\rho_0}\partial_{l}q
                            \right)
     - \frac{J}{\rho_0} \partial_{jl}^2\partial_{t}q
   \right)
     \nonumber\\&\indeq
       + g^{kl}
       \partial_{ik}^2\eta^{\tau}
         \hat n_{\tau}
   \left(
     \partial_{j}\partial_{t}
       (\partial_{t} v^{\mu}\partial_{l}\eta_{\eta})
       -
     \partial_{j}\partial_{t}^2
       v^{\mu}\partial_{l}\eta_{\eta}
   \right)
   .
   \label{EQ50}
  \end{align}

Next, we consider
  \begin{align*}
   I_{113}
   &=
   - \int_{0}^{t} \int_{\Gamma_1}
    \partial_{i}(    \sqrt{g} g^{ij})
    \Pi_{\mu}^{\alpha}
    \partial_{j}\partial_{t}^{2}v^{\mu}
     \partial_{t}^{3}  v_{\alpha}
   - \int_{0}^{t} \int_{\Gamma_1}
    \sqrt{g} g^{ij}
    \partial_{i}\Pi_{\mu}^{\alpha}
    \partial_{j}\partial_{t}^{2}v^{\mu}
     \partial_{t}^{3}  v_{\alpha}
   \nonumber\displaybreak\\&
   =
   - \int_{0}^{t} \int_{\Gamma_1}
    \partial_{i}(    \sqrt{g} g^{ij})
    \hat n^{\alpha}\hat n_{\mu}
    \partial_{j}\partial_{t}^{2}v^{\mu}
     \partial_{t}^{3}  v_{\alpha}
   \nonumber\\&\indeq
   - \int_{0}^{t} \int_{\Gamma_1}
    \sqrt{g} g^{ij}
    \partial_{i}\hat n_{\mu}
    \partial_{j}\partial_{t}^{2}v^{\mu}
    \hat n^{\alpha}     \partial_{t}^{3}  v_{\alpha}
   - \int_{0}^{t} \int_{\Gamma_1}
    \sqrt{g} g^{ij}
    \hat n_{\mu}
    \partial_{j}\partial_{t}^{2}v^{\mu}
    \partial_{i}\hat n^{\alpha} \partial_{t}^{3}  v_{\alpha}
   \nonumber\\&
   = I_{1131}+I_{1132}+I_{1133},
  \end{align*}
where we used $\Pi_{\mu}^{\alpha}=\hat n^{\alpha}\hat n_{\mu}$.
The first term $I_{1131}$
is of high order and can not be treated directly.
It cancels with a term resulting from $I_{14}$ further below;
cf.~\eqref{EQ60}.
Using \eqref{EQ50}, we have
  \begin{align}
   \begin{split}
   I_{1132}
   &\siml
      - \int_{0}^{t} \int_{\Gamma_1}
       \sqrt{g} g^{ij}
       g^{kl}
       \partial_{ik}^2\eta^{\tau}
         \hat n_{\tau}
         \frac{J}{\rho_0}\partial_{jl}^2\partial_{t}q
        \hat n^{\alpha}     \partial_{t}^{3}  v_{\alpha}
   \\&
   \siml
      - \int_{\Gamma_1}
       \sqrt{g} g^{ij}
       g^{kl}
       \partial_{ik}^2\eta^{\tau}
         \hat n_{\tau}
         \frac{J}{\rho_0}\partial_{jl}^2\partial_{t}q
        \hat n^{\alpha}     \partial_{t}^{2}  v_{\alpha}
        \bigm|_{0}^{t}
        \\
        &
      + \int_{0}^{t} \int_{\Gamma_1}
       \sqrt{g} g^{ij}
       g^{kl}
       \partial_{ik}^2\eta^{\tau}
         \hat n_{\tau}
         \frac{J}{\rho_0}\partial_{jl}^2\partial_{t}^2q
        \hat n^{\alpha}     \partial_{t}^{2}  v_{\alpha}
   .
   \end{split}
   \label{EQ51}
  \end{align}
The first term is easily controlled since
  \begin{equation}
   \hat n^{\alpha} \partial_{t}^2 v_{\alpha}
   =    \hat n^{\alpha} \hat n^{\tau} \hat n_{\tau} \partial_{t}^2 v_{\alpha}
   =    \hat n^{\tau} \Pi_{\tau}^{\alpha} \partial_{t}^2 v_{\alpha}
   .
   \nonumber
  \end{equation}
For the second term in \eqref{EQ51}, we use
  \begin{equation}
   q=-\sigma \Delta_{g}\eta^{\alpha}\hat n_{\alpha},
   \nonumber
  \end{equation}
which follows from
$
    \hat n^{\alpha}   q=-\sigma \Delta_{g}\eta^{\alpha}
$
and consequently (recalling \eqref{EQ16} and using $\Pi^\al_\mu = \hat{n}^\al \hat{n}_\mu$)
  \begin{equation}
   q
   = -\sigma g^{ij} \hat n_{\mu} \partial_{ij}^2 \eta^{\mu},
   \label{EQ54}
  \end{equation}
and we obtain
  \begin{align*}
    I_{1132}
     &\siml
    -\sigma
     \int_{0}^{t} \int_{\Gamma_1}
       \sqrt{g} g^{ij}
       g^{kl}
       \partial_{ik}^2\eta^{\tau}
         \hat n_{\tau}
         \frac{J}{\rho_0}
         \partial_{jl}^2
          \bigl(
           g^{mn} \hat n_{\mu} \partial_{mn}^2 \partial_{t} v^{\mu}
          \bigr)
        \hat n^{\alpha}     \partial_{t}^{2}  v_{\alpha}
   .
  \end{align*}
Integrating by parts in $x_l$ and then in $x_i$, we get
\begin{align*}
    I_{1132}
     &\siml
    \sigma
     \int_{0}^{t} \int_{\Gamma_1}
       \sqrt{g} g^{ij}
       g^{kl}
       \partial_{ikl}^3\eta^{\tau}
         \hat n_{\tau}
         \frac{J}{\rho_0}
         \partial_{j}
          \bigl(
           g^{mn} \hat n_{\mu} \partial_{mn}^2 \partial_{t} v^{\mu}
          \bigr)
        \hat n^{\alpha}     \partial_{t}^{2}  v_{\alpha}
    \\&\indeq
    +
    \sigma
     \int_{0}^{t} \int_{\Gamma_1}
       \sqrt{g} g^{ij}
       g^{kl}
       \partial_{ik}^2\eta^{\tau}
         \hat n_{\tau}
         \frac{J}{\rho_0}
         \partial_{j}
          \bigl(
           g^{mn} \hat n_{\mu} \partial_{mn}^2 \partial_{t} v^{\mu}
          \bigr)
        \hat n^{\alpha}     \partial_{l}\partial_{t}^{2}  v_{\alpha}
        \end{align*}
  \begin{align}
   \begin{split}
  &
    =
    -
    \sigma
     \int_{0}^{t} \int_{\Gamma_1}
       \sqrt{g} g^{ij}
       g^{kl}
       \partial_{kl}^2\eta^{\tau}
         \hat n_{\tau}
         \frac{J}{\rho_0}
         \partial_{ij}^2
          \bigl(
           g^{mn} \hat n_{\mu} \partial_{mn}^2 \partial_{t} v^{\mu}
          \bigr)
        \hat n^{\alpha}     \partial_{t}^{2}  v_{\alpha}
   \\&\indeq
    -
    \sigma
     \int_{0}^{t} \int_{\Gamma_1}
       \sqrt{g} g^{ij}
       g^{kl}
       \partial_{kl}^2\eta^{\tau}
         \hat n_{\tau}
         \frac{J}{\rho_0}
         \partial_{j}
          \bigl(
           g^{mn} \hat n_{\mu} \partial_{mn}^2 \partial_{t} v^{\mu}
          \bigr)
        \hat n^{\alpha} \partial_{i}    \partial_{t}^{2}  v_{\alpha}
    \\&\indeq
    +
    \sigma
     \int_{0}^{t} \int_{\Gamma_1}
       \sqrt{g} g^{ij}
       g^{kl}
       \partial_{ik}^2\eta^{\tau}
         \hat n_{\tau}
         \frac{J}{\rho_0}
         \partial_{j}
          \bigl(
           g^{mn} \hat n_{\mu} \partial_{mn}^2 \partial_{t} v^{\mu}
          \bigr)
        \hat n^{\alpha}     \partial_{l}\partial_{t}^{2}  v_{\alpha}
   .
   \end{split}
   \label{EQ55}
  \end{align}
The last two integrals cancel by the symmetry property
  \begin{equation}
   \sum_{i,k,l=1}^{2}
    \bigl(   g^{ji}g^{kl}-g^{ik}g^{lj}\bigr)=0
   \label{EQ53}
  \end{equation}
(which is true for any symmetric matrix); this identity is proven by using
$g^{lj}=g^{jl}$ in the last sum and then relabeling it.
Thus we only need to treat the first term in \eqref{EQ55}.
Integrating by parts in $x_i$, $x_j$, and then
in $t$, we get
  \begin{align*}
   \begin{split}
   I_{11322}
   &\siml
     -
    \sigma
     \int_{0}^{t} \int_{\Gamma_1}
       \sqrt{g}
       g^{kl}
       \partial_{kl}^2\eta^{\tau}
         \hat n_{\tau}
         \frac{J}{\rho_0}
           (g^{mn} \hat n_{\mu} \partial_{mn}^2 \partial_{t} v^{\mu})
           ( g^{ij}        \hat n^{\alpha}
         \partial_{ij}^2
         \partial_{t}^{2}  v_{\alpha}    )
     \nonumber\\&
     =
     -\frac{1}{2}
    \sigma
     \int_{0}^{t} \int_{\Gamma_1}
       \sqrt{g}
       g^{kl}
       \partial_{kl}^2\eta^{\tau}
         \hat n_{\tau}
         \frac{J}{\rho_0}
           \partial_{t}
           \bigl(g^{mn} \hat n_{\mu} \partial_{mn}^2 \partial_{t} v^{\mu}
            g^{ij}        \hat n^{\alpha}
         \partial_{ij}^2
         \partial_{t}  v_{\alpha}
           \bigr)
    \nonumber\\&
    \siml
     -\frac{1}{2}
    \sigma
     \int_{\Gamma_1}
       \sqrt{g}
       g^{kl}
       \partial_{kl}^2\eta^{\tau}
         \hat n_{\tau}
         \frac{J}{\rho_0}
           g^{mn} \hat n_{\mu} \partial_{mn}^2 \partial_{t} v^{\mu}
            g^{ij}        \hat n^{\alpha}
         \partial_{ij}^2
         \partial_{t}  v_{\alpha}
          \bigm|_{0}^{t}
  \nonumber\\&\indeq
     +
     \frac{1}{2}
    \sigma
     \int_{0}^{t} \int_{\Gamma_1}
           \partial_{t}
       \left(
       \sqrt{g}
       g^{kl}
       \partial_{kl}^2\eta^{\tau}
         \hat n_{\tau}
         \frac{J}{\rho_0}
        \right)
         g^{mn} \hat n_{\mu} \partial_{mn}^2 \partial_{t} v^{\mu}
            g^{ij}        \hat n^{\alpha}
         \partial_{ij}^2
         \partial_{t}  v_{\alpha}
   .
   \end{split}
  \end{align*}
It is easy to check that both terms can be controlled.
For the first term on the far right, we use that
$\partial_{kl}^{2}\eta$ vanishes at $t=0$.
This completes the treatment of the term $I_{11}$.

\paragraph{Estimates of $I_{12}$ and $I_{13}$}
The term $I_{12}$ is split as
  \begin{align*}
   I_{12}
   &=
    \int_{0}^{t}
    \int_{\Gamma_1}
    \partial_{t}(    \sqrt{g} g^{ij}) \Pi_{\mu}^{\alpha}
    \partial_{ij}^2\partial_{t}v^{\mu}
    \partial_{t}^{3}  v_{\alpha}
    +
    \int_{0}^{t}
    \int_{\Gamma_1}
    \sqrt{g} g^{ij} \partial_{t} \Pi_{\mu}^{\alpha}
    \partial_{ij}^2\partial_{t}v^{\mu}
    \partial_{t}^{3}  v_{\alpha}
   \nonumber\\&
   =
    \int_{\Gamma_1}
    \partial_{t}(    \sqrt{g} g^{ij}) \Pi_{\mu}^{\alpha}
    \partial_{ij}^2\partial_{t}v^{\mu}
    \partial_{t}^{2}  v_{\alpha}
    \bigm|_{0}^{t}
    -
    \int_{0}^{t}
    \int_{\Gamma_1}
    \partial_{t}(    \sqrt{g} g^{ij}) \Pi_{\mu}^{\alpha}
    \partial_{ij}^2\partial_{t}^2v^{\mu}
    \partial_{t}^2  v_{\alpha}
    \nonumber\\&\indeq
    -
    \int_{0}^{t}
    \int_{\Gamma_1}
    \partial_{t}(    \sqrt{g} g^{ij}) \partial_{t}\Pi_{\mu}^{\alpha}
    \partial_{ij}^2\partial_{t}v^{\mu}
    \partial_{t}^{2}  v_{\alpha}
     -
    \int_{0}^{t}
    \int_{\Gamma_1}
    \partial_{t}^2(    \sqrt{g} g^{ij}) \Pi_{\mu}^{\alpha}
    \partial_{ij}^2\partial_{t}v^{\mu}
    \partial_{t}^{2}  v_{\alpha}
    \nonumber\\&\indeq
    +
    \int_{0}^{t}
    \int_{\Gamma_1}
    \sqrt{g} g^{ij} \partial_{t} \Pi_{\mu}^{\alpha}
    \partial_{ij}^2\partial_{t}v^{\mu}
    \partial_{t}^{3}  v_{\alpha}
    \nonumber\\&
    =
    I_{121}
    + I_{122}
    + I_{123}
    + I_{124}
    + I_{125}
   .
  \end{align*}
All the terms except $I_{123}$ are estimated as above.
For $I_{123}$, we use \eqref{EQ41} and obtain
  \begin{align*}
   I_{123}
   &
   =
    -
    \int_{0}^{t}
    \int_{\Gamma_1}
    \partial_{t}(    \sqrt{g} g^{ij})
       \hat n^{\mu}
    \partial_{ij}^2\partial_{t}v^{\mu}
       \partial_{t}\hat n^{\alpha}
    \partial_{t}^{2}  v_{\alpha}
    \\
    &
    \indeq
    -
    \int_{0}^{t}
    \int_{\Gamma_1}
    \partial_{t}(    \sqrt{g} g^{ij})
       \partial_{t}\hat n^{\mu}
    \partial_{ij}^2\partial_{t}v^{\mu}
        \hat n^{\alpha}
    \partial_{t}^{2}  v_{\alpha}
   .
  \end{align*}
The terms are treated as $I_{11222}$ and
$I_{11221}$ respectively.
This concludes the treatment of $I_{12}$.

The term $I_{13}$ is handled analogously to $I_{12}$ to $I_{14}$,
so we omit the details.

\paragraph{Estimate of $I_{14}$}
For $I_{14}$, we have
  \begin{align*}
   I_{14}
    &=
    \int_{0}^{t}
    \int_{\Gamma_1}
    \partial_{t}^{3}(    \sqrt{g} g^{ij} \Pi_{\mu}^{\alpha})
    \partial_{ij}^2\eta^{\mu}
    \partial_{t}^{3}  v_{\alpha}
    \nonumber\\&
   \siml
    \int_{0}^{t}
    \int_{\Gamma_1}
    \sqrt{g} g^{ij}
    \partial_{t}^{3}
 \Pi_{\mu}^{\alpha}
    \partial_{ij}^2\eta^{\mu}
    \hat n^{\alpha}
    \partial_{t}^{3}  v_{\alpha}
    +
    \int_{0}^{t}
    \int_{\Gamma_1}
    \partial_{t}^{3}(    \sqrt{g} g^{ij})
     \Pi_{\mu}^{\alpha}
    \partial_{ij}^2\eta^{\mu}
    \partial_{t}^{3}  v_{\alpha}
    \nonumber\\&
    \siml
    \int_{0}^{t}
    \int_{\Gamma_1}
   \sqrt{g} g^{ij}
    \partial_{t}^{3}\hat n_{\mu}
    \partial_{ij}^2\eta^{\mu}
    \hat n^{\alpha}
    \partial_{t}^{3}  v_{\alpha}
    +
    \int_{0}^{t}
    \int_{\Gamma_1}
    \sqrt{g} g^{ij}
    \hat n_{\mu}
    \partial_{ij}^2\eta^{\mu}
    \partial_{t}^{3}\hat n^{\alpha}
    \partial_{t}^{3}  v_{\alpha}
    \nonumber\\&
    +
    \int_{0}^{t}
    \int_{\Gamma_1}
    \partial_{t}^{3}(    \sqrt{g} g^{ij})
     \Pi_{\mu}^{\alpha}
    \partial_{ij}^2\eta^{\mu}
    \partial_{t}^{3}  v_{\alpha}
    = I_{141}+I_{142} + I_{143},
  \end{align*}
where we used \eqref{EQ56} in the last step.
The terms $I_{142}$ and $I_{143}$
are treated with similar methods (see below); here we focus on
the high order term
$I_{141}$.
Since, by \eqref{EQ43}, we have
  \begin{align*}
   \partial_{t}^{3} \hat n_{\mu}
   &= - g^{kl}\partial_{k}\partial_{t}^2v^{\tau}\hat n_{\tau}\partial_{l}\eta_{\mu}
     -
     \left(
        \partial_{t}^2(     g^{kl}\partial_{k}v^{\tau}\hat n_{\tau}\partial_{l}\eta_{\mu})
           - g^{kl}\partial_{k}\partial_{t}^2v^{\tau}\hat n_{\tau}\partial_{l}\eta_{\mu}
     \right)
  \end{align*}
we get
  \begin{equation}
   I_{141}
   \siml
    -
       \int_{0}^{t}
    \int_{\Gamma_1}
   \sqrt{g} g^{ij}
     g^{kl}\partial_{k}\partial_{t}^2v^{\tau}\hat n_{\tau}\partial_{l}\eta_{\mu}
    \partial_{ij}^2\eta^{\mu}
    \hat n^{\alpha}
    \partial_{t}^{3}  v_{\alpha}
   .
   \label{EQ59}
  \end{equation}
At this point we need the identity
  \begin{equation}
   \partial_{i}(\sqrt{g}g^{ik})
      =
       -\sqrt{g} g^{ij}g^{kl} \partial_{ij}^2\eta^{\mu}\partial_{l}\eta_{\mu},
   \label{EQ57}
  \end{equation}
which we prove next.
First, by \eqref{EQ36},
we have
  \begin{align}
   \begin{split}
     {\partial}_i(\sqrt{g} g^{ij} )
     &=
     \sqrt{g}
     \left(\frac{1}{2} g^{ij}   g^{mn} - g^{im} g^{jn}\right)
     {\partial}_i g_{mn}
     \\&
     =
     \sqrt{g}
     \left(\frac{1}{2} g^{ij}   g^{mn} - g^{im} g^{jn}\right)
     {\partial}_i
     (\partial_{m}\eta^{\mu}\partial_{n}\eta_{\mu})
    \\&
     =
     \sqrt{g}
     \left(\frac{1}{2} g^{ij}   g^{mn} - g^{im} g^{jn}\right)
     \partial_{im}^2\eta^{\mu}\partial_{n}\eta_{\mu}
     \\
     &
     +
     \sqrt{g}
     \left(\frac{1}{2} g^{ij}   g^{mn} - g^{im} g^{jn}\right)
     \partial_{m}\eta^{\mu}     \partial_{in}^2\eta_{\mu}
   .
   \end{split}
   \nonumber
  \end{align}
In the second term on the far right side, we relabel
$m$ and $n$ and then factor out
$\partial_{im}^2\eta^{\mu}\partial_{n}\eta_{\mu}$.
We get
  \begin{align}
   \begin{split}
     {\partial}_i(\sqrt{g} g^{ij} )
     &=
     \sqrt{g}
     \left(\frac{1}{2} g^{ij}   g^{mn} - g^{im} g^{jn}\right)
     \partial_{im}^2\eta^{\mu}\partial_{n}\eta_{\mu}
     \\
     &
     +
     \sqrt{g}
     \left(\frac{1}{2} g^{ij}   g^{mn} - g^{in} g^{jm}\right)
     \partial_{n}\eta^{\mu}     \partial_{im}^2\eta_{\mu}
    \\&
    =
     \sqrt{g}
     \left( g^{ij}   g^{mn} - g^{im} g^{jn}
                            - g^{in} g^{jm}
     \right)
     \partial_{im}^2\eta^{\mu}\partial_{n}\eta_{\mu}
    \\&
    =
    -\sqrt{g} g^{im} g^{jn}      \partial_{im}^2\eta^{\mu}\partial_{n}\eta_{\mu}
    +\sqrt{g} \partial_{im}^2\eta^{\mu}\partial_{n}\eta_{\mu}
       (g^{ij}g^{mn}-g^{in}g^{jm})
   .
   \end{split}
   \nonumber
  \end{align}
Since $\partial_{im}^{2}\eta^\mu (g^{ij}g^{mn}-g^{in}g^{jm})=0$
due to anti-symmetry in $i$ and $m$ in the term in parenthesis, the identity \eqref{EQ57} follows.
Using \eqref{EQ57} in \eqref{EQ59}, we get
  \begin{align}
   \begin{split}
   I_{141}
    &\siml
    \int_{0}^{t}
    \int_{\Gamma_1}
      \partial_{i}(\sqrt{g}g^{ik})
      \partial_{k}\partial_{t}^2v^{\tau}\hat n_{\tau}
    \hat n^{\alpha}
    \partial_{t}^{3}  v_{\alpha}
   .
   \end{split}
   \label{EQ60}
  \end{align}
As pointed out earlier, this term cancels with $I_{1131}$ above.

As said, the terms $I_{142}$ and $I_{143}$ are treated with similar ideas as above.
We illustrate this by estimating $I_{143}$. Integrating by parts in time
\begin{align}
\begin{split}
I_{143}
  & =
  I_{143,0} +
 \int_{\Gamma_1}
    \partial_{t}^{3}(    \sqrt{g} g^{ij})
     \Pi_{\mu}^{\alpha}
    \partial_{ij}^2\eta^{\mu}
    \partial_{t}^{2}  v_{\alpha}
    -
    \int_0^t
     \int_{\Gamma_1}
    \partial_{t}^{4}(    \sqrt{g} g^{ij})
     \Pi_{\mu}^{\alpha}
    \partial_{ij}^2\eta^{\mu}
    \partial_{t}^{2}  v_{\alpha}
    \\
    &
    -
    \int_0^t
     \int_{\Gamma_1}
    \partial_{t}^{3}(    \sqrt{g} g^{ij})
     \partial_t(\Pi_{\mu}^{\alpha}
    \partial_{ij}^2\eta^{\mu})
    \partial_{t}^{2}  v_{\alpha}
    \\
    & = I_{143,0} + I_{1431} + I_{1432} + I_{1433},
\end{split}
\nonumber
\end{align}
where $I_{143,0}$ is controlled by $\ccP_0$.
Let us handle $I_{1431}$. Using (\ref{EQ36}) to write
\begin{gather}
 \partial_t (\sqrt{g} g^{ij} )  = \sqrt{g}
  \left( g^{ij}   g^{kl} - 2 g^{lj} g^{ik} \right)  \partial_k v^\la \partial_l \eta_\la,
\nonumber
\end{gather}
we have
\begin{align}
\begin{split}
 \partial^3_t (\sqrt{g} g^{ij} )  & \siml
 \partial_t^2( \sqrt{g}(g^{ij} g^{kl} -2 g^{jl} g^{ik}) )\partial_k v^\la \partial_l \eta_\la
 \\ &
 +
  \sqrt{g}(g^{ij} g^{kl} -2 g^{jl} g^{ik}) \partial_k \partial^2_t v^\la \partial_l \eta_\la .
 \end{split}
\label{expression_g_ttt}
\end{align}
We split $I_{1431}$ accordingly,
\begin{gather}
I_{1431} \siml I_{14311} + I_{14312},
\nonumber
\end{gather}
and note $I_{14311}$ that can be directly estimated producing
\begin{gather}
I_{14311} \leq \tilde{\epsilon} \norm{ \partial^2_t v}^2_1 + \ccP \int_0^t \ccP.
\nonumber
\end{gather}
For $ I_{14312}$, we time differentiate (\ref{EQ44}) and integrate by parts with respect
to $x^k$ to obtain
\begin{gather}
I_{14311} \leq  \tilde{\epsilon}( \norm{ \partial_t q}^2_2
+ \norm{ \Pi \overline{\partial} \partial^2_t v}^2_{0,\Ga_1} )
+ \ccP \int_0^t \ccP.
\nonumber
\end{gather}
This produces an estimate for $I_{1431}$ and $I_{1433}$ is handled along the same lines.

Let us now investigate $I_{1432}$. Taking one further time
derivative of (\ref{expression_g_ttt}) and using the resulting expression into
$I_{1432}$, we see that the top term is
\begin{align}
I_{1432,{\rm top} } = \int_0^t \int_{\Ga_1} \sqrt{g} ( g^{ij} g^{kl} - 2 g^{jl} g^{ik} )
\partial_k \partial^3_t v^\la \partial_l \eta_\la \Pi^\al_\mu \partial^2_{ij} \eta^\mu
\partial^2_t v_\al.
\nonumber
\end{align}
With the help of (\ref{EQ44}), we have
\begin{align}
\begin{split}
I_{1432,{\rm top} } \siml \int_0^t \int_{\Ga_1}
\frac{J}{\varrho_0}\sqrt{g} ( g^{ij} g^{kl} - 2 g^{jl} g^{ik} )
\partial_k \partial_l \partial^2_t q
\Pi^\al_\mu \partial^2_{ij} \eta^\mu
\partial^2_t v_\al.
\end{split}
\nonumber
\end{align}
Writing
\begin{align}
\begin{split}
&( g^{ij} g^{kl} - 2 g^{jl} g^{ik} )
\partial_k \partial_l \partial^2_t q  \partial^2_{ij} \eta^\mu\\&\qquad =
( g^{ij} g^{kl} -  g^{jl} g^{ik} )
\partial_k \partial_l \partial^2_t q  \partial^2_{ij} \eta^\mu
- g^{jl} g^{ik}
\partial_k \partial_l \partial^2_t q  \partial^2_{ij} \eta^\mu ,
\end{split}
\nonumber
\end{align}
we observe that the first term cancels by (\ref{EQ53}). Writing now
$\Pi^\al_\mu = \hat{n}^\al \hat{n}_\mu$ and invoking (\ref{EQ54}),
we see that the resulting integral is estimated as
the integral $I_{1132}$ (see what follows  (\ref{EQ53})).

\subsubsection{Finalizing the three time derivatives estimate}
Combining the energy identity (\ref{EQ03}) with the
estimates for $\cJ_i$, $i=1,\dots,5$ from
Sections~\ref{section_J_1_5} and~\ref{section_J_2}, and
with the boundary estimates of Section~\ref{section_I_1} produces
(\ref{partial_3_t_v_estimate}). In doing so, we use
assumption (\ref{eq_state_assumption}) to bound the integral
$\int_\Om
    (   {R(0)}/{R})
 \bar{q}^\prime(R)
    (\partial_{t}^{3} R)^2$ from below.

\subsection{Two time derivatives\label{section_two_time_derivative}}
In this section we derive the estimate
\begin{align}
\begin{split}
\norm{ \overline{\partial} \partial^2_t v }^2_0
+ \norm{ \overline{\partial}\partial^2_t R }^2_0
+  \norm{ \Pi  \overline{\partial}{}^2 \partial_t v}^2_{0,\Ga_1}
\leq &
 \,
 \tilde{\epsilon} \ccN
+ \ccP_0
 + \ccP \int_0^t \ccP,
\end{split}
\label{partial_2_t_v_estimate}
\end{align}
where $\ccP$ inside the integral now also depends on
$\Vert \eta\Vert_{H^{3.5+\delta}}$.
The energy equality for two time derivatives
of $(v,R)$
reads
  \begin{align*}
   &
   \frac12
   \frac{d}{dt}
   \int_\Om
   R(0)
   \partial_{t}^{2}\partial^{i}v^{\beta}
   \partial_{t}^{2}\partial_{i}v_{\beta}
   +
   \frac12
   \frac{d}{dt}
   \int_\Om
   \frac{R(0)}{R}
  \bar{q}^\prime(R)
   \partial_{t}^{2} \partial^{i}R
   \partial_{t}^{2} \partial_{i}R
   \\
   &
   \indeq
   +
   \int_{\Gamma_1}^{}
   \partial_{t}^2\partial^{i}(J a^{\alpha \beta} q)
   \partial_{t}^2 \partial_{i}v_{\beta}
   N_{\alpha}
   \end{align*}
\begin{align}
  \begin{split}
&\indeq
   =
   -
   \int_\Om
   \frac{R(0)}{R}
   \Bigl(
   \partial_{t}^2\partial^{i}(R a^{\alpha \beta} \partial_{\alpha}v_{\beta})
   -
   R a^{\alpha \beta} \partial_{t}^{2}\partial^{i}\partial_{\alpha}v_{\beta}
   \Bigl)
   \partial_{t}^2\partial^{i}\left(
                               \frac{q}{R}
                              \right)
  \\&\indeq\indeq
   +
   \int_\Om
   R(0)
   \left(
   \partial_{t}^2\partial^{i}
    \left(a^{\alpha\beta}\frac{q}{R}\right)
    -
    a^{\alpha\beta}
    \partial_{t}^{2}\partial^{i}\left(\frac{q}{R}\right)
   \right)
   \partial_{t}^2\partial_{i}\partial_{\alpha}v_{\beta}
  \\&\indeq
       - 2
        \int_{\Omega}
        R(0) \frac{\bar q''(R)}{R}
      \partial_{t}^{3}\partial^{i} R
      \partial_{t}\partial_{i} R
      \partial_{t}R
   -
   \int_{\Omega}
   R(0)
   \frac{
    \bar q''(R)
       }{
    R
   }
   \partial_{t}^{3}\partial^{i}R
   \partial_{i}R
   \partial_{t}^2 R
   \\&\qquad\qquad
    -
     \int_{\Omega}
      R(0) \frac{
            \bar q'''(R)
               }{
            R
           }
       \partial_{t}^{3}\partial^{i} R
        \partial_{i}R (\partial_{t}R)^2
   \\&\qquad\qquad
  +
  \frac12
  \int_\Om
   R(0)
   \partial_{t}
   \left(
    \frac{\bar q'(R)}{R}
   \right)
   \partial_{t}^{2}\partial^{i}R
   \partial_{t}^{2}\partial_{i}R
   -
   \int
    \partial_{i} R(0)
   \partial_{t}^3 v^{\beta}
   \partial_{t}^2 \partial_{i}v_{\beta}
   .
  \end{split}
   \label{EQ40}
  \end{align}
In order to derive \eqref{EQ40},
we multiply
\eqref{Lagrangian_free_Euler_eq}
(with $\alpha$ replaced by $\beta$)
by $J$, then differentiate in $t$ twice,
differentiate in in $x_i$ once,
and contract with $\partial_{i}\partial_{t}^{2}v_{\beta}$ obtaining
  \begin{equation}
   \int
   \partial_{t}^2 \partial^{i}(J R \partial_{t} v^{\beta})
   \partial_{t}^2 \partial_{i}v_{\beta}
   +
   \int_{\Omega}
   \partial_{t}^2
   \partial^{i}
   ( J a^{\alpha\beta}\partial_{\alpha}q)
   \partial_{t}^2\partial_{i} v_{\beta}
   =0
   \nonumber
  \end{equation}
from where,
using \eqref{Lagrangian_free_Euler_J_rho},
  \begin{equation}
   \int
    R(0)
   \partial_{t}^2 \partial^{i}( \partial_{t} v^{\beta})
   \partial_{t}^2 \partial_{i}v_{\beta}
   +
   \int_{\Omega}
   \partial_{t}^2
   \partial^{i}
   ( J a^{\alpha\beta}\partial_{\alpha}q)
   \partial_{t}^2\partial_{i} v_{\beta}
   =
   -
   \int
    \partial_{i} R(0)
   \partial_{t}^3 v^{\beta}
   \partial_{t}^2 \partial_{i}v_{\beta}
   .
   \nonumber
  \end{equation}
Integrating by parts in
$x_{\alpha}$, we get
  \begin{align}
  \begin{split}
   &
   \frac12
   \frac{d}{dt}
   \int_\Om
   R(0)
   \partial_{t}^2 \partial^{i}v^{\beta}
   \partial_{t}^2 \partial_{i}v_{\beta}
   +
   \int_{\Gamma_1}
   \partial_{t}^2\partial^{i}(J a^{\alpha\beta} q)
   \partial_{t}^2\partial_{i} v_{\beta} N_{\alpha}
   \\&\indeq
   =
   \int_\Om
   \partial_{t}^2\partial^{i}
   ( J a^{\alpha\beta}q)
   \partial_{t}^2\partial_{i}
   \partial_{\alpha}v_{\beta},
   -
   \int
    \partial_{i} R(0)
   \partial_{t}^3  \partial_{t} v^{\beta}
   \partial_{t}^2 \partial_{i}v_{\beta}
  \end{split}
  \nonumber
  \end{align}
due to the boundary integral vanishing on $\Gamma_0$.
For the term on the right side, we have
  \begin{align}
  \begin{split}
   \int_\Om
   &
   \partial_{t}^2
   \partial^{i}
   ( J a^{\alpha\beta}q)
   \partial_{t}^2 \partial_{\alpha}\partial_{i}v_{\beta}
   =
   \int_\Om
    R(0)
    \partial_{t}^{2}\partial^{i}
    \left(
    a^{\alpha\beta} \frac{q}{R}
    \right)
    \partial_{t}^2\partial_{i} \partial_{\alpha}v_{\beta}
   \\&\indeq
   =
   \int_\Om
    R(0)
    a^{\alpha\beta}
    \partial_{t}^{2}
    \partial^{i}
    \left(
     \frac{q}{R}
    \right)
    \partial_{t}^2\partial_{i} \partial_{\alpha}v_{\beta}
    \\
    &
    \indeq
   +
   \int_\Om
   R(0)
   \left(
    \partial_{t}^2\partial^{i} \left(a^{\alpha\beta} \frac{q}{R}\right)
    -
    a^{\alpha\beta}
      \partial_{t}^2\partial^{i}\left(
                        \frac{q}{R}
                                \right)
   \right)
   \partial_{t}^2\partial_{i}\partial_{\alpha}v_{\beta}\\
&\indeq
   =
   \int_\Om
   \frac{R(0)}{R}
   \partial_{t}^2 \partial^{i}(R a^{\alpha\beta}
                        \partial_{\alpha}v_{\beta}
                  )
    \partial_{t}^{2}\partial_{i}
    \left(
     \frac{q}{R}
    \right)
\\&\indeq\indeq
   -
   \int_\Om
   \frac{R(0)}{R}
   \Bigl(
     \partial_{t}^2\partial^{i}
          (
            Ra^{\alpha\beta}
            \partial_{\alpha}v_{\beta}
          )
     -
            Ra^{\alpha\beta}
            \partial_{t}^{2}\partial^{i}\partial_{\alpha}v_{\beta}
   \Bigr)
          \partial_{t}^{2}\partial_{i}
           \left(
            \frac{q}{R}
           \right)
\\&\indeq\indeq
   +
   \int_\Om
   R(0)
   \left(
    \partial_{t}^2\partial^{i} \left(a^{\alpha\beta} \frac{q}{R}\right)
    -
    a^{\alpha\beta}
      \partial_{t}^2\partial^{i}\left(
                                     \frac{q}{R}
                                \right)
   \right)
   \partial_{t}^2\partial_{i}\partial_{\alpha}v_{\beta}
   ,
   \end{split}
   \nonumber
  \end{align}
from where, using
(\ref{Lagrangian_free_density_Euler_eq}),
  \begin{align*}
  \begin{split}
   \int_\Om
   &
   \partial_{t}^2\partial^{i}
   ( J a^{\alpha\beta}q)
   \partial_{t}^2 \partial_{\alpha}\partial_{i}v_{\beta}
   \\&
   =
   -
   \int_\Om
   \frac{R(0)}{R}
    \partial_{t}^{3}\partial^{i}R
    \partial_{t}^{2}\partial_{i}
    \left(
     \frac{q}{R}
    \right)
    \\
    \end{split}
    \end{align*}
     \begin{align}
  \begin{split}
    &
    \indeq
   -
   \int_\Om
   \frac{R(0)}{R}
   \Bigl(
     \partial_{t}^2\partial^{i}
          (
            Ra^{\alpha\beta}
            \partial_{\alpha}v_{\beta}
          )
     -
            Ra^{\alpha\beta}
            \partial_{t}^{2}\partial^{i}\partial_{\alpha}v_{\beta}
   \Bigr)
          \partial_{t}^{2}\partial_{i}
           \left(
            \frac{q}{R}
           \right)
\\&\indeq\indeq
   +
   \int_\Om
   R(0)
   \left(
    \partial_{t}^2\partial^{i} \left(a^{\alpha\beta} \frac{q}{R}\right)
    -
    a^{\alpha\beta}
      \partial_{t}^2\partial^{i}\left(
                        \frac{q}{R}
                                \right)
   \right)
   \partial_{t}^2\partial_{i}\partial_{\alpha}v_{\beta}
\\&\indeq
   = \cI_1+\cI_2+\cI_3    .
   \end{split}
   \nonumber
  \end{align}
The terms $\cI_2$ and $\cI_3$ give
the first and second terms
on the right side of \eqref{EQ40} respectively.
In order to treat
  \begin{equation}
   \cI_1
   =
   -
   \int_\Om
   \frac{R(0)}{R}
    \partial_{t}^{3}\partial^{i}R
    \partial_{t}^{2}\partial_{i}
     \bar q,
   \nonumber
  \end{equation}
we write
  \begin{equation}
   \partial_{t}^{2}\partial_{i} (\bar q(R))
   =
   \bar q'(R) \partial_{t}^{2}\partial_{i}R
   + 2\bar q''(R) \partial_{t}\partial_{i} R \partial_{t} R
   + \bar q''(R) \partial_{i}R\partial_{t}^2 R
   + \bar q'''(R) (\partial_{t} R)^2 \partial_{i}R
\nonumber
  \end{equation}
and thus
  \begin{align}
   \begin{split}
     \cI_1
      &=
       -
        \int_{\Omega}
        R(0) \frac{\bar q'(R)}{R}
      \partial_{t}^{3}\partial^{i} R
      \partial_{t}^{2}\partial_{i} R
    -
     2
     \int_{\Omega}
      R(0) \frac{
            \bar q''(R)
               }{
            R
           }
       \partial_{t}^{3}\partial^{i} R
        \partial_{t}\partial_{i}R \partial_{t}R
   \\&\indeq
       -
        \int_{\Omega}
        R(0) \frac{\bar q''(R)}{R}
      \partial_{t}^{3}\partial^{i} R
      \partial_{i}R
      \partial_{t}^2 R
      \\
      & \indeq
    -
     \int_{\Omega}
      R(0) \frac{
            \bar q'''(R)
               }{
            R
           }
       \partial_{t}^{3}\partial^{i} R
        \partial_{i}R (\partial_{t}R)^2
    \\&
    =
    \cI_{11}
    +     \cI_{12}
    +     \cI_{13}
    +     \cI_{14}
   .
   \end{split}
\notag
  \end{align}
The terms $\cI_{12}$, $\cI_{13}$, and $\cI_{14}$
give the third, fourth, and fifth
terms on
the right side of \eqref{EQ40} respectively.
For $\cI_{11}$, we write
  \begin{align}
   \begin{split}
    \cI_{11}
      &=
      -
      \frac12
      \frac{d}{dt}
      \int_{\Omega}
       R(0)
       \frac{\bar q'(R)}{R}
        \partial_{t}^{2}\partial^{i} R
        \partial_{t}^{2}\partial_{i} R
     +
     \frac12
     \int_{\Omega}
      R(0)
      \partial_{t}\left(
                    \frac{\bar q'(R)}{R}
                  \right)
        \partial_{t}^{2}\partial^{i} R
        \partial_{t}^{2}\partial_{i} R
    .
   \end{split}
   \label{EQ11}
  \end{align}
The first term on the right side leads to the
second term on the left side of \eqref{EQ40},
while the second term on the right side of \eqref{EQ11}
gives the sixth term in \eqref{EQ40}.

\colb
\subsubsection{Treatment of the terms involving two time derivatives}
The estimates for the right side
of \eqref{EQ40} is the same as the estimates of the
corresponding terms in \eqref{EQ03} and we thus do not provide
full details.
However, we still show
how to treat the most involved term
  \begin{gather}
   S
   =
   \int_0^t \int_\Om \partial^2_t \partial^{i}A^{\mu\al} \partial^2_t\partial_{i} \partial_\mu v_\al q
   .
   \nonumber
  \end{gather}
As in \eqref{EQ02}, we have
\begin{align}
\begin{split}
S  &=
\int_0^t \int_\Om q \ep^{\al\la \tau} \partial_2 \partial_t\partial^{i} v_\la \partial_3 \eta_\tau
\partial_1 \partial^2_t \partial_{i}v_\al
+
\int_0^t \int_\Om q\ep^{\al\la \tau}\partial_2 \eta_\la  \partial_3 \partial_t\partial^{i} v_\tau
\partial_1 \partial^2_t \partial_{i}v_\al
\\&\qquad
-
\int_0^t \int_\Om q \ep^{\al\la \tau} \partial_1 \partial_t \partial^{i}v_\la \partial_3 \eta_\tau
\partial_2 \partial^2_t \partial_{i}v_\al
-
\int_0^t \int_\Om q \ep^{\al\la \tau}\partial_1 \eta_\la  \partial_3 \partial_t \partial^{i}v_\tau
\partial_2 \partial^2_t \partial_{i}v_\al
\\&\quad
+
\int_0^t  \int_\Om q \ep^{\al\la \tau} \partial_1 \partial_t \partial^{i}v_\la \partial_2 \eta_\tau
\partial_3 \partial^2_t \partial_{i}v_\al
+
\int_0^t \int_\Om q \ep^{\al\la \tau}\partial_1 \eta_\la  \partial_2 \partial_t v_\tau
\partial_3 \partial^2_t \partial_{i}v_\al
+ L_3
\\&=
S_1 + \cdots + S_6 + L_3,
\end{split}
\nonumber
\end{align}
where $L_3$ equals
  \begin{align*}
   &\int_{0}^{t}
    \int_{\Omega}
     q \partial \bar\partial v \partial v \partial \partial_{t}^{2}\bar\partial v
      =
      \int_{\Omega}
             q \partial \bar\partial v \partial v \partial \bar\partial\partial_{t} v   |_{0}^{t}
      -
      \int_{0}^{t}
       \int_{\Omega}
             \partial_{t}  q \partial \bar\partial v \partial v \partial \bar\partial\partial_{t} v
             \\
             &
      -
      \int_{0}^{t}
       \int_{\Omega}
              q \partial \bar\partial\partial_{t} v \partial v \partial \bar\partial\partial_{t} v
    \\&\indeq
    \le
    \Vert q\Vert_{L^{\infty}}
    \Vert\nabla v\Vert_{L^\infty}
    \Vert \nabla \bar\partial v\Vert_{L^2}
    \Vert \nabla \bar\partial v_{t}\Vert_{L^2}
    +
    \ccP_0 + \ccP
   .
  \end{align*}
We group the leading terms as before; the analog for \eqref{EQ05} is
\begin{align}
\begin{split}
S_1 + S_3  &=
\int_0^t \int_\Om  q\ep^{\al\la \tau} \partial_2 \partial^{i}\partial_t v_\la \partial_3 \eta_\tau
\partial_1 \partial^2_t \partial_{i}v_\al
+
\int_0^t \int_\Om q \ep^{\al\la \tau} \partial_1 \partial^2_t \partial^{i}v_\la \partial_3 \eta_\tau
\partial_2 \partial_t \partial_{i}v_\al
\\&\qquad\qquad
-
\int_\Om q \ep^{\al\la \tau} \partial_1 \partial_t \partial^{i}v_\la \partial_3 \eta_\tau
\partial_2 \partial_t \partial_{i}v_\al
+ L_4
\\&
=
\int_0^t \int_\Om q \ep^{\al\la \tau} \partial_2 \partial_t \partial^{i}v_\la \partial_3 \eta_\tau
\partial_1 \partial^2_t\partial^{i} v_\al
+
\int_0^t \int_\Om q \ep^{\la \al  \tau} \partial_1 \partial^2_t \partial^{i}v_\al \partial_3 \eta_\tau
\partial_2 \partial_t \partial_{i}v_\la
\\&\qquad\qquad
-
\int_\Om q \ep^{\al\la \tau}  \partial_1 \partial_t \partial^{i}v_\la \partial_3 \eta_\tau
\partial_2 \partial_t \partial_{i}v_\al
+ L_4
\\&
=
\int_0^t \int_\Om q \ep^{\al\la \tau} \partial_2 \partial_t \partial^{i}v_\la \partial_3 \eta_\tau
\partial_1 \partial^2_t \partial_{i}v_\al
-
\int_0^t \int_\Om q  \ep^{\al \la  \tau} \partial_1 \partial^2_t \partial^{i}v_\al \partial_3 \eta_\tau
\partial_2 \partial_t \partial_{i}v_\la
\\&\qquad\qquad
-
\int_\Om q \ep^{\al\la \tau} \partial_1 \partial_t \partial^{i}v_\la \partial_3 \eta_\tau
\partial_2 \partial_t \partial_{i}v_\al  + L_4
\\&\hspace{-1cm}
=  0 -\int_\Om q \ep^{\al\la \tau} \partial_1 \partial_t \partial^{i}v_\la \partial_3 \eta_\tau
\partial_2 \partial_t \partial_{i}v_\al
+
\int_\Om q\ep^{\al\la \tau} \partial_1 \partial_t \partial^{i}v_\la \partial_3 \eta_\tau
\partial_2 \partial_t \partial_{i}v_\al |_{t=0}
+ L_4.
\end{split}
\nonumber
\end{align}
The symbol $L_4$ denotes the lower order terms, which are bounded below.
The first term on the far right side is treated as
  \begin{align*}
   S_{13,\partial} &= -\int_\Om q \ep^{\al\la \tau} \partial_1 \partial_t\partial^{i} v_\la \partial_3 \eta_\tau
   \partial_2 \partial_t\partial^{i} v_\al
   \\&
    =
    -\int_\Om q \ep^{\al\la 3} \partial_1 \partial_t \partial^{i}v_\la \partial_3 \eta_3
    \partial_2 \partial_t v_\al
     -\int_\Om q \ep^{\al\la i} \partial_1 \partial_t \partial^{j}v_\la \partial_3 \eta_i
    \partial_2 \partial_t \partial_{j} v_\al.
   \nonumber
  \end{align*}
The last integral is bounded by
  \begin{gather}
   \tilde{\ep} \norm{\partial_t\bar\partial v}^2_1 \norm{q}_{L^\infty(\Om)}
   \leq  C \tilde{\ep} \norm{\partial_t\bar\partial v}^2_1
   \nonumber
  \end{gather}
using $\eta(0) = \id$ and thus  $\partial_3 \eta_i = O(\tilde{\ep})$ for small time.
Since
$\partial_3 \eta_3 = 1 + O(\tilde{\ep})$, we have
  \begin{gather}
   -\int_\Om q \ep^{\al\la 3} \partial_1 \partial^2_t v_\la \partial_3 \eta_3
   \partial_2 \partial^2_t v_\al
   =
   -\int_\Om q \ep^{\al\la 3} \partial_1 \partial^2_t v_\la
   \partial_2 \partial^2_t v_\al
   \nonumber
   \\
   -\int_\Om q \ep^{\al\la 3} \partial_1 \partial^2_t v_\la O(\tilde{\epsilon})
   \partial_2 \partial^2_t v_\al
   .
   \nonumber
  \end{gather}
The last integral is bounded by
$\tilde{\ep} \norm{\partial_t \bar\partial v}^2_1 \norm{q}_{L^\infty(\Om)}$.
For the remaining integral, we write
  \begin{align}
   \begin{split}
    -\int_\Om q \ep^{\al\la 3} \partial_1 \partial_t \partial^{i}v_\la
   \partial_2 \partial_t \partial_{i}v_\al
   = & \,
   - \int_\Om ( q \ep^{123} \partial_1 \partial_t \partial^{i}v_2 \partial_2 \partial_t \partial_{i}v_1
   \\
   &
   +  q\ep^{213} \partial_1 \partial_t \partial^{i}v_1 \partial_2 \partial_t \partial_{i}v_2 )
   \\
   = & \,
   - \int_\Om ( q\partial_1 \partial_t \partial^{i}v_2 \partial_2 \partial_t \partial_{i}v_1
   - q\partial_1 \partial_t \partial^{i}v_1 \partial_2 \partial_t \partial_{i}v_2 ).
   \end{split}
   \nonumber
  \end{align}
We integrate by parts in both terms obtaining
  \begin{align}
   \begin{split}
    -\int_\Om q \ep^{\al\la 3} \partial_1 \partial_t \partial^{i}v_\la
   \partial_2 \partial_t \partial_{i}v_\al
   = & \,
    \int_\Om (q \partial_2 \partial_1 \partial_t \partial^{i}v_2  \partial_t \partial_{i}v_1
   -  q\partial_t \partial^{i}v_1 \partial_1\partial_2 \partial_t \partial_{i}v_2 )
   \\
    &  \, \int_\Om (\partial_1 \partial_t \partial^{i}v_2 \partial_t \partial_{i}v_1 \partial_2 q
   - \partial_t \partial^{i}v_1 \partial_2 \partial_t \partial_{i}v_1 \partial_1 q )
   \\
   = & \,
   0 +  \int_\Om ( \partial_1 \partial_t \partial^{i}v_2 \partial_t \partial_{i}v_1 \partial_2 q
   - \partial_t \partial^{i}v_1 \partial_2 \partial_t \partial_{i}v_1 \partial_1 q ),
   \end{split}
  \nonumber
  \end{align}
where the last integral obeys
\begin{align}
\begin{split}
 \int_\Om ( \partial_1 \partial_t \partial^{i}v_2 \partial_t \partial_{i}v_1 \partial_2 q
- \partial_t \partial^{i}v_1 \partial_2 \partial_t \partial_{i}v_1 \partial_1 q )
\leq & \,
  C \norm{\partial_t \partial^{i}v}_1 \norm{\partial_t \partial_{i}v}_0 \norm{\partial q}_{L^\infty(\Om)}
\\
 \leq & \,
\tilde{\ep} \norm{\partial_t \bar\partial v}^2_1
+ C\norm{\partial_t \bar\partial v}^2_0 \norm{\partial q}^2_{L^\infty(\Om)} .
\end{split}
\nonumber
\end{align}

The symbol $L_4$ above consists of the sum of the terms
  \begin{equation*}
   \int_\Om q\ep^{\al\la \tau} \partial_1 \partial_t \partial^{i}v_\la \partial_3 \eta_\tau
      \partial_2 \partial_t \partial_{i}v_\al |_{t=0}
    \le
    \ccP_0
  \end{equation*}
and
  \begin{align*}
    \int_0^t \int_\Om
       \ep^{\al\la \tau}
         \partial_{t}
          \Bigl(
             q\partial_1 \partial_t\partial^{i} v_\la \partial_3 \eta_\tau
          \Bigr)
        \partial_2 \partial_t \partial_{i}v_\al
    \le
    \int_{0}^{t}
     \ccP
    .
  \end{align*}
We thus conclude
  \begin{gather}
   S_1 + S_3
    \leq  \tilde{\ep} \norm{\partial_t \bar\partial v}^2_1\norm{q}_{L^\infty(\Om)}
   + C\norm{\partial_t \bar\partial v}^2_0 \norm{\partial q}^2_{L^\infty(\Om)}
   +
   \ccP_0 + \int_0^t \ccP.
  \nonumber
  \end{gather}

As above, when treating
$S_4+S_6$ and $S_2 + S_5$
we obtain an extra boundary term of the type
  \begin{gather}
   \int_{\Ga_1} q \partial_t \partial^{i}v_2 \partial_2 \partial_t \partial_{i}v_3,
   \nonumber
  \end{gather}
which is bounded analogously to \eqref{EQ61}.
In summary, we obtain
  \begin{align}
    S
     &\le
     \tilde{\ep} \norm{ \Pi \overline{\partial}{}^2 \partial_t v}^2_{0,\Ga_1}+
  \tilde{\ep} \norm{\partial_t \bar\partial v}^2_1\norm{q}_{L^\infty(\Om)}
           + C\norm{\partial_t \bar\partial v}^2_0 \norm{\partial q}^2_{L^\infty(\Om)}
    + \tilde{\ep} \norm{\partial_t \bar\partial v}^2_{1,\Ga_1}
    \nonumber
   \\&\indeq
   +C  \Vert q\Vert_{L^{\infty}}
    \Vert\nabla v\Vert_{L^\infty}
    \Vert \nabla \partial_t v\Vert_{0}
    \Vert \nabla \partial_t^2 v\Vert_{0}
   +
 \tilde{\ep}
\norm{\partial_t \bar\partial v}_1^2 \norm{q}^2_{L^\infty(\Ga_1)}
    \nonumber\\&\indeq
     + C \norm{ \partial_t \bar\partial v}^2_0 \norm{q}^2_{L^\infty(\Ga_1)}
     +         \ccP_0 + \int_0^t \ccP
   .
   \nonumber
  \end{align}

\subsection{Estimates at $t=0$\label{section_time_zero}}
In the above estimates we had several expressions
involving time derivatives of $v$ and $R$ evaluated at zero. Here, we show that these quantities
may all be estimated in terms of $\cP_0$. More precisely, we show that
\begin{align}
\begin{split}
&\norm{\partial_t v(0)}_2 + \norm{\partial_t R(0) }_2
+ \norm{\partial^2_t v (0)}_1
\\
&
+ \norm{\partial^2_t R(0)}_1
+ \norm{\partial^3_t v(0)}_0 + \norm{\partial^3_t R(0)}_0 \leq \ccP_0,
\end{split}
\label{time_zero_interior}
\end{align}
and
\begin{align}
\begin{split}
\norm{\partial_t v(0)}_{2,\Ga_1} + \norm{\partial^2_t v(0)}_{1,\Ga_1} \leq \ccP_0.
\end{split}
\label{time_zero_boundary}
\end{align}
In light of (\ref{Lagrangian_free_Euler_J_rho}), the equation
(\ref{Lagrangian_free_Euler_eq})
may be written as
\begin{gather}
\varrho_0 \partial_t v^\al + J a^{\mu \al} q^\prime(R) \partial_\mu R = 0.
\label{density_eq_2}
\end{gather}
From (\ref{density_eq_2}) and
(\ref{Lagrangian_free_density_Euler_eq}) we get $\norm{\partial_t v(0)}_2 \leq \ccP_0$
and $\norm{\partial_t R(0)}_2 \leq \ccP_0$. Differentiating (\ref{density_eq_2}) and
(\ref{Lagrangian_free_density_Euler_eq}) in time and evaluating at zero gives
$\norm{\partial^2_t v(0)}_1 \leq \ccP_0$
and $\norm{\partial^2_t R(0)}_1 \leq \ccP_0$. Taking another time derivative
of  (\ref{density_eq_2}) and
(\ref{Lagrangian_free_density_Euler_eq}) and evaluating at zero produces (\ref{time_zero_interior}).

To obtain (\ref{time_zero_boundary}), we use (\ref{Lagrangian_free_Euler_eq}) to estimate
terms in $v^i(0)$ and (\ref{Lagrangian_bry_q}) to estimate terms in $v^3(0)$.
Evaluating (\ref{Lagrangian_free_Euler_eq}) at $t=0$ with $\al=i$ and
recalling (\ref{a_zero_identity}) gives
\begin{gather}
\partial_t v^i(0) = -\frac{1}{R(0)} \de^{ji} \partial_j R(0),
\label{estimate_higher_regularity}
\end{gather}
which implies  $\norm{\partial_t v^i(0)}_{2,\Ga_1} \leq \ccP_0$ since
$R(0) \in H^3(\Ga_1)$. Note that the conclusion would not be true if we had a $\partial_3 R$
term, that is why $\al=3$ has to be treated differently.

\begin{remark}
The estimate (\ref{estimate_higher_regularity}) shows why we require higher regularity
for the initial data on the boundary. We want $\partial_t v \in H^2(\Ga_1)$
in order to apply the div-curl estimates, as explained in Section~\ref{section_strategy}.
But this would not hold
even at time zero without the regularity assumption on the boundary.
\label{remark_extra_regularity}
\end{remark}

Differentiating
(\ref{Lagrangian_bry_q}) with $\al = 3$ in time twice gives
\begin{align}
\begin{split}
\partial_t^2(\Delta_g \eta^3) = & \,
-\frac{1}{\si} \frac{a^{\mu 3} N_\mu}{|a^T N|} q^\prime(R) \partial^2_t R
- \frac{1}{\si} \partial_t\left(\frac{a^{\mu 3} N_\mu}{|a^T N|}\right) q^\prime(R) \partial_t R
\\
& -\frac{1}{\si} \frac{a^{\mu 3} N_\mu}{|a^T N|} q^{\prime\prime}(R) (\partial_t R)^2
-\frac{1}{\si} \partial^2_t\left(\frac{a^{\mu 3} N_\mu}{|a^T N|}\right)q(R).
\end{split}
\label{partial_2_t_bry}
\end{align}
But from (\ref{Laplacian_def}),
\begin{align}
\begin{split}
\left. \partial^2_t (\Delta_g \eta^3) \right|_{t=0}= & \,
\delta^{ij} \partial^2_{ij} \partial_t v^3(0) + \partial_t g^{ij}(0) \partial^2_{ij} v^3(0)
- \delta^{ij} \partial^k v^3(0) \partial^2_{ij} v_k (0)
\\
= & \, \delta^{ij} \partial^2_{ij} \partial_t v^3(0)  + F_0,
\end{split}
\label{partial_2_t_bry_t_zero}
\end{align}
where in light of our assumptions $\norm{F_0}_{1,\Ga_1} \leq \ccP_0$. From
(\ref{Lagrangian_free_density_Euler_eq}) we obtain $\norm{\partial_t \break R(0)}_{1.5,\Ga_1}
\leq \ccP_0$ and $\norm{\partial^2_t R(0)}_{0.5,\Ga_1} \leq \ccP_0$.

Using (\ref{Lagrangian_free_Euler_a_eq}) we find
\begin{align}
\begin{split}
\partial_t\left(\frac{a^{\mu 3} N_\mu}{|a^T N|}\right) = & \,
-\frac{1}{|a^T N|^3}\left(
|a^T N|^2 a^{3 \ga }\partial_\mu v_\ga a^{\mu 3} +
a^{\si 3} N_\si \sum_{\be=1}^3 a^{3 \be} a^{3\ga} \partial_\mu v_\ga a^{\mu \be}
\right).
\end{split}
\nonumber
\end{align}
We now differentiate this expression in time again, use (\ref{Lagrangian_free_Euler_a_eq}) once
more, and evaluate it at zero. Combined with the previous estimates and
(\ref{partial_2_t_bry}) and (\ref{partial_2_t_bry_t_zero}), we conclude that,
on $\Ga_1$,
\begin{gather}
\de^{ij} \partial^2_{ij} v^3(0) = F_1,
\nonumber
\end{gather}
where $F_1$ satisfies the estimate $\norm{F_1}_{0.5, \Ga_1} \leq \ccP_0$. By
the elliptic theory, we then obtain $\norm{F_1}_{2.5, \Ga_1} \leq \ccP_0$, which
combined with the previous estimate for $\partial_t v^i(0)$ gives
$\norm{ \partial_t v(0)}_{2,\Ga_1} \leq \ccP_0$.

The estimate for $\partial^2_t v$ is obtained in a similar way, upon differentiating
one more time in time and proceeding as above. We omit the details, but
explain where the assumption on $(\Delta \dive v_0 )\srest \Ga_1$
is used. Proceeding as just explained, we find (writing $\sim$ to mean
``up to lower order") $\de^{ij} \partial^2_{ij} \partial^2_t v^3(0)
\sim \partial^3_t R(0)$. But from (\ref{Lagrangian_free_Euler_eq}) and
(\ref{Lagrangian_free_density_Euler_eq}) we obtain $\partial^3_t R(0) \sim \Delta \dive v(0)$,
which requires $(\Delta \dive v(0)) \srest \Ga_1$ in $H^{-1}(\Ga_1)$ in order to
produce $\partial^2_t v^3(0)$ in $H^1(\Ga_1)$ from the elliptic estimates.

\section{Estimates for the curl}
\label{sec4}
In this section, we obtain estimates for the curl of $v$ and its time derivatives.
First, we write (\ref{Cauchy_invariance}) as
\begin{align}
\begin{split}
\varepsilon^{\al\be\ga} \partial_\be v_\ga =
\varepsilon^{\al\be\ga} \partial_\be v^\mu (\de_{\ga \mu} - \partial_\ga \eta_\mu )
+
\omega_0^\al,
\end{split}
\label{Cauchy_invariance_v}
\end{align}
from which we obtain
\begin{align}
\begin{split}
\varepsilon^{\al\be\ga} \partial_\be \partial_t v_\ga =
\varepsilon^{\al\be\ga} \partial_\be \partial_t v^\mu (\de_{\ga \mu} - \partial_\ga \eta_\mu ),
\end{split}
\label{Cauchy_invariance_v_t}
\end{align}
where we used  $\varepsilon^{\al\be\ga} \partial_\be v^\mu  \partial_\ga v_\mu  = 0$
and
\begin{align}
\begin{split}
\varepsilon^{\al\be\ga} \partial_\be \partial^2_t v_\ga
&=
\varepsilon^{\al\be\ga} \partial_\be \partial^2_t v^\mu (\de_{\ga \mu} - \partial_\ga \eta_\mu )
- \ep^{\al\be\ga} \partial_\be \partial_t v^\mu \partial_\ga v_\mu
   .
\end{split}
\label{Cauchy_invariance_v_tt}
\end{align}
Since
\begin{gather}
\de_{\ga \mu} - \partial_\ga \eta_\mu  = - \int_0^t \partial_\ga v_\mu,
\nonumber
\end{gather}
the term $\de_{\ga \mu} - \partial_\ga \eta_\mu$ can be made arbitrarily small
for small time. Hence, the relevant norm of
the terms proportional to $\de_{\ga \mu} - \partial_\ga \eta_\mu$
on the right-hand side of (\ref{Cauchy_invariance_v}), (\ref{Cauchy_invariance_v_t}), and (\ref{Cauchy_invariance_v_tt})
can be absorbed into the left-hand side. We then have  to estimate the remaining
terms on the right-hand side.

From (\ref{Cauchy_invariance_v}) we immediately get
\begin{gather}
\norm{ \curl v}_2^2 \les \ccP_0 + \int_0^t \ccP ,
\label{curl_v_estimate}
\end{gather}
where we used Jensen's inequality.

Note that \eqref{Cauchy_invariance_v_t} gives
\begin{gather}
\norm{\curl \partial_t v}_1^2 \les \tilde{\ep} \norm{R}_3^2 + \ccP_0
+ \ccP \int_0^t \ccP
,
\label{curl_partial_t_v_estimate}
\end{gather}
while \eqref{Cauchy_invariance_v_tt} implies
\begin{gather}
\norm{ \curl \partial^2_t v}_0^2
\les \tilde{\ep} ( \norm{v}_3^2 + \norm{R}_3^2) + \ccP_0 + \ccP \int_0^t \ccP.
\label{curl_partial_t_2_v_estimate}
\end{gather}

\section{Conclusion}

The main goal of this section is to provide the necessary ingredients
for the div-curl estimate and to show the concluding Gronwall argument.

\subsection{Comparison between $\Pi \partial_t^a v$ and $\partial_t^a v^3$}
\label{subsec1}
In order to use div-curl estimates,
we first show that our estimates for $\Pi \partial_t^a v$ are equivalent,
modulo lower order terms, to estimates for $\partial_t^a v^3$.
Recalling (\ref{projection_identity}), for any vector field $X$ we have
\begin{gather}
(\Pi \overline{\partial} X )^3 = \Pi^3_\la \overline{\partial} X^\la
= \overline{\partial} X^3 - g^{kl}\partial_k \eta^3 \partial_l \eta_\la \overline{\partial} X^\la.
\label{difference_proj_1}
\end{gather}
Using $X = \partial^2_t v$ and estimating  (\ref{difference_proj_1}) in the $H^{-0.5}(\Ga_1)$ norm yields
\begin{align}
\begin{split}
\norm{\overline{\partial} \partial^2_t v^3 }_{-0.5,\Ga_1}^2 \les
\norm{ \Pi \overline{\partial} \partial^2_t v}^2_{0,\Ga_1} +
\norm{g^{kl}\partial_k \eta^3 \partial_l \eta_\la}^2_{1.5,\Ga_1} \norm{\partial^2_t v^\la}_{0.5,\Ga_1}^2.
\end{split}
\nonumber
\end{align}
We add $\norm{ \partial^2_t v^3 }_{-0.5,\Ga_1}^2$ to both sides, use the fact that
$\norm{ \partial^2_t v^3 }_{-0.5,\Ga_1}^2 + \norm{\overline{\partial} \partial^2_t v^3 }^2_{-0.5,\Ga_1}$ is equivalent to $\norm{ \partial^2_t v^3 }_{0.5,\Ga_1}^2 $, invoke
$\partial_k \eta^3 = \int_0^3 \partial_k v^3$, which holds since  $\eta^3(0) = 1$, to conclude
\begin{align}
\begin{split}
\norm{ \partial^2_t v^3 }_{0.5,\Ga_1}^2 \les
\tilde{\epsilon} \norm{ \partial^2_t v}_1^2 +
\norm{ \Pi \overline{\partial} \partial^2_t v}^2_{0,\Ga_1} + \ccP_0 +
\ccP \int_0^t \ccP,
\end{split}
\label{comparison_partial_2_t_v_3}
\end{align}
where the term $\norm{ \partial^2_t v^3 }_{-0.5,\Ga_1}^2$ that appeared on the
right-hand side was estimated using interpolation inequality, Young's inequality, and the fundamental theorem
of calculus.

Similarly, using (\ref{difference_proj_1}) with $X = \overline{\partial} \partial_t v$, estimating
in the $H^{-0.5}(\Ga_1)$ norm and adding $\norm{\partial_t v^3}^2_{-0.5} +
\norm{\overline{\partial} \partial_t v^3}^2_{-0.5,\Ga_1}$ to both sides gives
\begin{align}
\begin{split}
\norm{ \partial_t v^3 }_{1.5,\Ga_1}^2 \les
\tilde{\epsilon} \norm{ \partial_t v}_2^2 +
\norm{ \Pi \overline{\partial}{}^2 \partial_t v}^2_{0,\Ga_1} + \ccP_0 +
\ccP \int_0^t \ccP.
\end{split}
\label{comparison_partial_t_v_3}
\end{align}

We also need an estimate for $\norm{v^3}_{2.5,\Ga_1}$. This follows
directly from the boundary condition, as
we now show.
Differentiating (\ref{Lagrangian_bry_q}) in time  and setting $\al=3$ yields
\begin{align}
\begin{split}
\sqrt{g} g^{ij} \partial^2_{ij} v^3 - \sqrt{g} g^{ij} \Ga_{ij}^k \partial_k v^3
= & - \partial_t(\sqrt{g} g^{ij} ) \partial^2_{ij} \eta^3 - \partial_t( \sqrt{g} g^{ij} \Ga_{ij}^k ) \partial_k \eta^3
\\
&
- \frac{1}{\si} \partial_t a^{\mu 3} N_\mu q - \frac{1}{\si} a^{\mu \si} N_\mu \partial_t q \, \text{ on } \, \Ga_1
\end{split}
\nonumber
\end{align}
where we also used
\eqref{Laplacian_identity_standard}.

In light of Proposition~\ref{proposition_regularity}, we have
\begin{gather}
\norm{ g_{ij} }_{2.5,\Ga_1} \leq C
\nonumber
\end{gather}
and
\begin{gather}
\norm{ \Ga_{ij}^k }_{1.5,\Ga_1} \leq C.
\nonumber
\end{gather}
Thus, by the elliptic estimates for operators with coefficients bounded in Sobolev norms (see \cite{CoutandShkollerFreeBoundary,EbenfeldEllipticRegularity})
we have
\begin{align}
\begin{split}
\norm{ v^3 }_{2.5,\Ga_1} \leq
&
\, C
\norm{ \partial_t(\sqrt{g} g^{ij} ) \partial^2_{ij} \eta^3 }_{0.5,\Ga_1}
+ C \norm{ \partial_t( \sqrt{g} g^{ij} \Ga_{ij}^k ) \partial_k \eta^3 }_{0.5,\Ga_1}
\\
&
+C\norm{\partial_t a^{\mu 3} N_\mu q}_{0.5,\Ga_1}
+C\norm{ a^{\mu \si} N_\mu \partial_t q }_{0.5,\Ga_1},
\end{split}
\nonumber
\end{align}
where $C$ depends on the bounds for $\norm{ g_{ij} }_{2.5,\Ga_1} $ and $\norm{ \Ga_{ij}^k }_{1.5,\Ga_1}$
stated above. The right-hand side is now estimated in a routine fashion, and we conclude
\begin{align}
\begin{split}
\norm{  v^3 }_{2.5,\Ga_1}^2 \les
\tilde{\epsilon} (\norm{ v}_3^2 + \norm{ R}_3^2)
 + \ccP_0 +
\ccP \int_0^t \ccP.
\end{split}
\label{estimate_v_3_boundary}
\end{align}

\subsection{Gronwall-type argument via barriers}
We shall show that our estimates
imply
  \begin{equation}
   \ccN(t)
   \le
   C_0 P(\ccN(0))
   + P(\ccN(t)) \int_{0}^{t} P(\ccN(s))\,ds
   \label{EQ64}
  \end{equation}
where $P$ is now a fixed polynomial
and $C_0$ is a fixed positive constant.
The inequality (\ref{EQ64}) implies, via a routine continuity argument that we now sketch for the reader's convenience, the boundedness of $\ccN(t)$
on a positive interval of time
(cf.~\cite[Section~8]{KukavicaTuffaha-RegularityFreeEuler}
where a similar inequality was treated).
Assume, without loss of generality, that $P$ is strictly positive and non-decreasing,
and denote $M=\ccN(0)$.
Let
  \begin{equation*}
   T_0
   =
   \inf\Bigl\{
        t\ge0
        :
        \ccN(t) \ge 2C_0 P(M) = M_1
       \Bigr\}
       \in (0,\infty]
   .
  \end{equation*}
If $T_0=\infty$, then $\ccN(t)\leq M_1$ for all $t\ge0$.
Otherwise, $T_0\in(0,\infty)$, and thus
  \begin{align}
   \begin{split}
    2 C_0 P(M) &= \ccN(T_0)
    \leq
    C_0 P(M)
    + P(M_1) \int_{0}^{T_0} P(M_1)\,ds
    = C_0 P(M)
      + T_0 P(M_1)^2,
   \end{split}
   \nonumber
  \end{align}
from where
$T_0\ge C_0 P(M)/P(M_1)^2$. We thus conclude that
  \begin{equation*}
   \ccN(t)\le M_1
   ,
   \hspace{0.3cm}
   t\in\left[0,\frac{C_0 P(M)}{P(M_1)^2}\right],
  \end{equation*}
and the local boundedness is established.

\subsection{Closing the estimates}
It remains to establish (\ref{EQ64}). Recall the standard div-curl estimate
\begin{align}
\begin{split}
\norm{X}_s \les \norm{\dive X}_{s-1} + \norm{\curl X}_{s-1} + \norm{X \cdot N}_{\partial,s-0.5}
+ \norm{X}_0.
\end{split}
\label{div_curl_standard}
\end{align}
From Sections~\ref{section_energy}, \ref{sec4}, and~\ref{subsec1}, we have
estimates for the curl and normal component of $v$ and their time derivatives, as well
as estimates for $\norm{\partial^3_t v}_0$ and $\norm{\partial_t^3 R}_0$. In order to apply (\ref{div_curl_standard}),
we need to estimate the divergence of $v$ and its time derivatives.

Taking two time derivatives of the density equation
 \eqref{Lagrangian_free_density_Euler_eq} leads to
  \begin{align*}
   \partial^{\alpha}\partial_{t}^2 v_{\alpha}
    &=
    (\delta^{\mu\alpha}-a^{\mu\alpha})\partial_{\mu}\partial_{t}^2v_{\alpha}
      - \frac{1}{R}
           \Bigl(
            \partial_{t}^2 (R a^{\mu\alpha}\partial_{\mu}v_{\alpha})
             - R a^{\mu\alpha} \partial_{\mu}\partial_{t}^2
               v_{\alpha}
           \Bigr)
      - \frac{1}{R} \partial_{t}^{3} R
   .
  \end{align*}
Taking the $L^2$ norm of both sides,
  \begin{align*}
    & \Vert
        \partial^{\alpha}\partial_{t}^2 v_{\alpha}
     \Vert_{0}
    \\&\les
    \Vert
    (\delta^{\mu\alpha}-a^{\mu\alpha})\partial_{\mu}\partial_{t}^2v_{\alpha}
    \Vert_{0}
      +
           \Bigl\Vert
            \partial_{t}^2 (R a^{\mu\alpha}\partial_{\mu}v_{\alpha})
             - R a^{\mu\alpha} \partial_{\mu}\partial_{t}^2 v_{\alpha}
           \Bigr\Vert_{0}
      +\Vert \partial_{t}^{3} R\Vert_{0}
   ,
  \end{align*}
where we used Lemma~\ref{L00}(x).
By expanding the derivatives in the second term and using
Lemma \ref{L00}(ix) we get
  \begin{align*}
     \Vert
        \dive\partial_{t}^2 v
     \Vert_{0}
    &\leq
   \tilde\epsilon
   \Vert\partial_{t}^2 v\Vert_{1}
   +
   C
   \Vert\partial_{t}^2(R a^{\mu\alpha})\partial_{\mu}v_{\alpha}\Vert_{0}
   +
   C
   \Vert\partial_{t}(R a^{\mu\alpha})
               \partial_{t}\partial_{\mu}v_{\alpha}\Vert_{0}
   +
   C \Vert \partial_{t}^{3}R\Vert_{0}.
  \end{align*}
Squaring and using (\ref{partial_3_t_v_estimate}) gives
  \begin{align}
     \Vert
        \dive\partial_{t}^2 v
     \Vert_{0}^2
   \le
   \tilde{\ep} \ccN + \ccP_0 + \ccP  \int_{0}^{t}\ccP.
   \label{EQ15}
  \end{align}

Now, in  (\ref{div_curl_standard}), taking $X = \partial^2_t v$, $s=1$, and squaring, recalling that $v\cdot N  =0$ on $\Ga_0$ and $v \cdot N = v^3$ on
$\Ga_1$, invoking
(\ref{curl_partial_t_2_v_estimate}), (\ref{EQ15}),  (\ref{comparison_partial_2_t_v_3}),
and (\ref{partial_3_t_v_estimate}),
 produces
 \begin{gather}
\norm{\partial^2_t v}^2_1 \les  \tilde{\epsilon} \ccN
+ \ccP_0
 + \ccP \int_0^t \ccP,
 \label{estimate_partial_2_t_v_div_curl}
\end{gather}
where the lower order term $\norm{\partial^2_t v}_0 $ was estimated in a standard fashion.

We now move to estimate $\partial^2_t R$. First, write (\ref{Lagrangian_free_Euler_eq}) as
\begin{gather}
R \partial_t v^\al + q^\prime(R) a^{\mu \al} \partial_\mu R = 0.
\label{velocity_eq_q}
\end{gather}
Taking $\partial^2_t$ of (\ref{velocity_eq_q}) gives
\begin{gather}
\partial^\al \partial^2_t R \siml (\de^{\mu \al} - a^{\mu \al} ) \partial_\mu \partial^2_t R
-\frac{R}{q^\prime(R)} \partial^3_t v^\al ,
\nonumber
\end{gather}
where we recall Notation \ref{notation_lot}. Taking $\al=1,2,3$
and invoking (\ref{partial_3_t_v_estimate}) produces
\begin{gather}
 \norm{\partial^2_t R}^2_1 \les  \tilde{\epsilon} \ccN
+ \ccP_0
 + \ccP \int_0^t \ccP,
 \label{estimate_partial_2_t_R_div_curl}
\end{gather}
where we also used (\ref{eq_state_assumption}).

Next we estimate $\Vert\dive \partial_{t}v\Vert_{1}$. From (\ref{Lagrangian_free_density_Euler_eq}) we have
  \begin{align*}
   \partial^{\alpha}\partial_{t} v_{\alpha}
    &=
    (\delta^{\mu\alpha}-a^{\mu\alpha})
        \partial_{\mu}\partial_{t}v_{\alpha}
      - \frac{1}{R}
           \Bigl(
            \partial_{t}(R a^{\mu\alpha}\partial_{\mu}v_{\alpha})
             - R a^{\mu\alpha} \partial_{\mu}\partial_{t}
               v_{\alpha}
           \Bigr)
      - \frac{1}{R} \partial_{t}^{2} R,
  \end{align*}
from where
  \begin{align*}
   \Vert\partial_{t} \dive v\Vert_{1}
    &\le
      \Vert
        \delta^{\mu\alpha}-a^{\mu\alpha}
      \Vert_{2}
      \Vert
        \partial_{\mu}\partial_{t}v_{\alpha}
      \Vert_{1}
      \\
      &
      +
        \left\Vert
          \frac{1}{R}
           \Bigl(
            \partial_{t}(R a^{\mu\alpha}\partial_{\mu}v_{\alpha})
             - R a^{\mu\alpha} \partial_{\mu}\partial_{t}
               v_{\alpha}
           \Bigr)
        \right\Vert_{1}
      +
       \left\Vert
       \frac{1}{R} \partial_{t}^{2} R
       \right\Vert_{1},
  \end{align*}
  leading to

  \ \vspace{-10pt}
  \begin{align}
     \Vert
        \dive\partial_{t} v
     \Vert_{1}^2
   \le
   \tilde\epsilon
   \Vert\partial_{t} v\Vert_{2}^2
   +
   C \Vert \partial_{t}^{2}R\Vert_{1}^2
   +
   \ccP_0
   +
   \int_{0}^{t}\ccP
   \label{EQ15_B}.
  \end{align}
Setting $X = \partial_t v$, $s=2$ and squaring (\ref{div_curl_standard}),
invoking
(\ref{curl_partial_t_v_estimate}), (\ref{EQ15_B}),  (\ref{comparison_partial_t_v_3}),
 (\ref{partial_2_t_v_estimate}), and (\ref{estimate_partial_2_t_R_div_curl})
 produces
 \begin{gather}
\norm{\partial_t v}^2_2 \les  \tilde{\epsilon} \ccN
+ \ccP_0
 + \ccP \int_0^t \ccP.
  \label{estimate_partial_t_v_div_curl}
\end{gather}
From (\ref{velocity_eq_q}) we may now estimate
$\partial_t R$ in terms of $\partial^2_t v$, so (\ref{estimate_partial_t_v_div_curl}) gives
\begin{gather}
 \norm{\partial_t R}^2_2 \les  \tilde{\epsilon} \ccN
+ \ccP_0
 + \ccP \int_0^t \ccP.
 \label{estimate_partial_t_R_div_curl}
\end{gather}

Finally, to bound $\Vert\dive v\Vert_{2}$, note that
  \begin{align*}
   \partial^{\alpha} v_{\alpha}
    &=
    (\delta^{\mu\alpha}-a^{\mu\alpha})
        \partial_{\mu}v_{\alpha}
      - \frac{1}{R} \partial_{t} R
  \end{align*}
whence
  \begin{align*}
   \Vert \dive v\Vert_{2}
    &\le
      \Vert
        \delta^{\mu\alpha}-a^{\mu\alpha}
      \Vert_{2}
      \Vert
        \partial_{\mu}v_{\alpha}
      \Vert_{2}
      +
       \left\Vert
       \frac{1}{R} \partial_{t} R
       \right\Vert_{2},
  \end{align*}
  so that
  \begin{align}
     \Vert
        \dive v
     \Vert_{2}
   \le
   \tilde\epsilon
   \Vert v\Vert_{3}
   +
   C \Vert \partial_{t}R\Vert_{2}
   +
   \ccP_0
   +
   \int_{0}^{t}\ccP
   .
   \label{EQ15_C}
  \end{align}
  In the same spirit as above, choosing now
$X = v$, $s=2$ and squaring (\ref{div_curl_standard}),
invoking
(\ref{curl_v_estimate}), (\ref{EQ15_C}), (\ref{estimate_v_3_boundary}),
and (\ref{estimate_partial_t_R_div_curl}) leads to
 \begin{gather}
\norm{v}^2_3 \les  \tilde{\epsilon} \ccN
+ \ccP_0
 + \ccP \int_0^t \ccP.
  \label{estimate_v_div_curl}
\end{gather}
Similarly to the foregoing, (\ref{velocity_eq_q}) gives an estimate for $R$ in light
of the estimate (\ref{estimate_partial_t_v_div_curl}) for $\partial_t v$, so
\begin{gather}
\norm{R}^2_3 \les
\tilde{\epsilon} \ccN
+ \ccP_0
 + \ccP \int_0^t \ccP.
 \label{estimate_R_3}
 \end{gather}

 Estimates (\ref{estimate_partial_2_t_v_div_curl}),  (\ref{estimate_partial_t_v_div_curl}),  (\ref{estimate_v_div_curl}),  (\ref{estimate_partial_2_t_R_div_curl}),
 (\ref{estimate_partial_t_R_div_curl}), (\ref{estimate_R_3}), (\ref{partial_3_t_v_estimate}),
 and (\ref{partial_2_t_v_estimate}) now imply
\begin{gather}
\ccN \les  \tilde{\epsilon} \ccN
+ \ccP_0
 + \ccP \int_0^t \ccP.
   \label{EQ70}
\end{gather}
Note that $\ccP$ inside the integral also depends on
$\Vert \eta\Vert_{H^{3.5+\delta}}$.
Using successive applications of Young's inequality, we can trade
the polynomial expressions $\ccP$ by polynomials in $\ccN$; choosing $\tilde{\ep}$
small enough produces (\ref{EQ64}).
Now,
combining
\eqref{EQ69}, \eqref{EQ29}, and \eqref{EQ67} with the div-curl inequality
\eqref{div_curl_standard} provides a Gronwall inequality
for $\Vert \eta\Vert_{H^{3.5+\delta}}$.
Coupling it with
\eqref{EQ70}
 then concludes the proof
of Theorem~\ref{main_theorem}.

\def\cprime{$'$}
\providecommand{\href}[2]{#2}
\providecommand{\arXiv}[1]{\href{http://arxiv.org/abs/#1}{arXiv:#1}} 
\providecommand{\url}[1]{\texttt{#1}}
\providecommand{\urlprefix}{URL }

\end{document}